\documentclass[11pt,a4paper]{article}

\usepackage[margin=1in]{geometry}
\usepackage[T1]{fontenc}
\usepackage{lmodern}
\usepackage{microtype}
\usepackage[numbers,sort&compress]{natbib}
\usepackage{siunitx}
\usepackage{booktabs}
\usepackage{amsthm}
\usepackage{subcaption}
\usepackage{graphicx}
\usepackage{amsmath,amssymb,bm}
\usepackage{accents}
\usepackage{algorithm,algpseudocode}
\usepackage{algorithmicx}
\usepackage[colorlinks=true,linkcolor=blue,citecolor=blue,urlcolor=blue]{hyperref}

\algrenewcommand\alglinenumber[1]{\footnotesize #1:}
\newcommand{\algFontSize}{\small}

\newtheorem{theorem}{Theorem}[section]

\newtheorem{lemma}{Lemma}[section]

\newtheorem{corollary}{Corollary}[section]

\newtheorem{remark}{Remark}[section]

\numberwithin{equation}{section}

\title{An unfitted boundary algebraic equation method with Calder\'on preconditioning for 2D Stokes flow in irregular geometry}

\author{Wenjun Ying\thanks{School of Mathematical Sciences, MOE-LSC and Institute of Natural Sciences, Shanghai Jiao Tong University, Minhang, Shanghai 200240, China. Email: \texttt{wying@sjtu.edu.cn}}
\and
Qing Xia\thanks{Corresponding author. College of Science, Mathematics and Technology, Wenzhou Kean University, Zhejiang 325060, China. Email: \texttt{qxia@kean.edu}}}

\date{}

\begin{document}
\maketitle

\begin{abstract}
We present an unfitted boundary algebraic equation method for the two-dimensional exterior/interior Stokes equations on a staggered MAC grid. By constructing an explicit free-space pair of velocity and pressure lattice Green's functions (LGFs) from free-space Laplace LGFs, we represent homogeneous fields using sources supported exclusively on thin staggered boundary layers. This formulation imposes physical Dirichlet data at cut points via local interpolation, while sampled-normal rank updates remove hydrostatic null modes associated with single or multiple obstacles. The workflow parallels that of classical boundary integral formulations and requires no artificial boundary conditions for exterior flows, but follows a discretize-then-represent route and does not require singular/near-singular quadrature. The resulting dense boundary system is solved via GMRES, utilizing a componentwise discrete Calder\'on preconditioner built from the scalar Laplace kernel and padded FFTs for fast volume convolutions. Extensive numerical validation, including multiply connected domains, narrow gaps, and Moffatt eddies, confirms discrete incompressibility to solver accuracy and recovers the expected Moffatt eddy scaling. We achieve second-order velocity and pressure convergence and bound maximum discrete divergence within numerical accuracy. The discrete Calder\'on preconditioner reduces the condition number by orders of magnitude and yields nearly mesh-independent conditioning in exterior configurations, while remaining effective---though more demanding---for narrow-gap and fine-grid interior problems.

\medskip
\noindent\textit{Keywords:} Stokes equations; Lattice Green's function; boundary algebraic equation; discrete potential theory; MAC scheme; Calder\'on preconditioning
\end{abstract}

\section{Introduction}\label{sec:intro}

The steady incompressible Stokes equations describe viscous flow in
the low-Reynolds-number regime and arise in particulate suspensions,
microfluidics, porous media, and fluid-structure interaction. In these
applications, the fluid region may contain curved inclusions, multiple
obstacles, narrow gaps, or moving boundaries. Boundary-fitted discretizations
can resolve such geometries accurately, but generating and updating a fitted
mesh may dominate the computational workflow. Unfitted approaches instead
embed the physical domain in a simple background grid. Representative examples
include immersed boundary and immersed interface methods
\citep{peskin2002immersed,leveque1994immersed,tan2009fast}, fictitious-domain
methods \citep{glowinski1994fictitious,burman2014fictitious}, and cut finite
elements \citep{hansbo2014cut}. A central difficulty is to impose the physical
boundary conditions accurately without sacrificing the stability and discrete
incompressibility of the underlying flow discretization.

Boundary integral formulations offer a complementary viewpoint. By
representing a Stokes field with fundamental solutions, they reduce a domain
problem to unknown densities on the boundary and are well suited to exterior
and multiple-particle flows \citep{pozrikidis1992boundary}. Fast multipole and
related acceleration techniques can reduce the cost of evaluating the
resulting layer potentials \citep{tornberg2008fast}. Their implementation requires accurate singular and nearly singular quadrature on the
physical boundary. 

Kernel-free boundary integral methods move the potential
evaluation to an auxiliary Cartesian grid while retaining a boundary
formulation \citep{ying2007kernel,ying2013kernel,zhou2024correction}.
More precisely, a kernel-free boundary integral (KFBI) method begins
with a continuous boundary integral equation but evaluates its boundary and
volume potentials indirectly by solving corrected interface problems on an
auxiliary structured grid. The analytical Green's kernel is therefore not
needed. This strategy has recently been extended to Stokes--Brinkman interface
problems with a corrected MAC discretization \citep{zhou2026brinkman}. 

The method developed here is related to both classical BIE and KFBI, but is best described
as a fully discrete boundary-integral analogue rather than as a direct
discretization of the continuous Stokes BIE. A classical BIE follows a
represent-then-discretize route, whereas the present method follows a
discretize-then-represent route. 
Like immersed-boundary, KFBI and cut-cell schemes, it works on a simple background grid and incorporates geometry only near the physical boundary. Like boundary-integral methods, it reduces the unknown to a boundary density and evaluates the field through potentials. The distinction is the order of construction: by first committing to a discrete Stokes operator and then building its potentials, one obtains a formulation that inherits the structure of the underlying MAC discretization, while retaining the boundary-centric organization familiar from classical potential theory. In particular, because the kernels are discrete from the outset, no singular or nearly singular quadratures are required.

This construction is rooted in discrete potential theory
\citep{duffin1953discrete} and in boundary algebraic
equations for lattice problems \citep{martinsson2009boundary}. Closely related
difference-potential methods characterize admissible boundary traces through
an auxiliary discrete problem \citep{ryaben2012method}. More recent unfitted
formulations combine discrete potentials with local basis functions to impose
boundary data on implicitly defined geometries
\citep{xia2023local,xia2026geom}. Lattice Green's functions are especially
attractive in this setting because they invert a translation-invariant
difference operator on the infinite lattice and therefore permit fast
convolution on a Cartesian grid and retain far-field asymptotics, thus requiring no artificial boundary conditions 
\citep{martinsson2002asymptotic,gillman2014fast,dorschner2020fast}.
Related LGF constructions also arise outside fluid solvers: Hodapp et~al.\ \citep{hodapp2019lattice} develop lattice Green function methods as a discrete boundary-element analogue for atomistic/continuum coupling in solid mechanics, while Bhamidipati et~al.\ \citep{bhamidipati2022inverse} use lattice Green functions to formulate inverse source problems for localizing defects in conducting lattices and metamaterials.
The physical
geometry enters only through a thin set of lattice nodes near the boundary and
through local interpolation to the boundary itself.

LGF-based incompressible flow solvers already exploit mimetic
staggered-grid operators and fast discrete convolutions on unbounded domains
\citep{liska2016fast}. In the work of Liska and Colonius, the scalar Laplace LGF
is used to solve the discrete Poisson subproblems arising from a projection
method. Their immersed-boundary extension couples prescribed surfaces to this
flow solver through regularized interpolation and spreading, with boundary
forces acting as Lagrange multipliers \citep{liska2017immersed}. They do not form
the velocity--pressure LGF pair of the coupled steady Stokes operator as a
discrete layer-potential kernel. Discrete Green's functions for Stokes response
have also been developed for correcting the self-induced velocity in
Euler--Lagrange point-particle simulations, including channel geometries
\citep{horwitz2022discrete}. They exploited periodicity in two homogeneous directions to tabulate wall-dependent discrete Stokes velocity responses for a three-dimensional two-plane channel. Their kernels depend parametrically on wall-normal source position and are used locally for particle self-disturbance correction. In contrast, the present velocity and pressure LGFs are translation-invariant free-space kernels of the MAC Stokes operator and are used globally to construct boundary and volume potentials.

The specific advance here is a translation-invariant free-space,
matrix-valued velocity LGF and associated pressure kernel for the steady
two-dimensional MAC Stokes operator. Constructed from regularized Laplace and
biharmonic LGFs, this pair supplies homogeneous boundary-layer and inhomogeneous
volume potentials. Physical Dirichlet data are imposed sharply at cut points,
yielding a rank-completed, Calder\'on-preconditioned boundary algebraic equation.

Extending this idea from a scalar elliptic equation \citep{xia2026geom} to the Stokes
system introduces several coupled difficulties. The two velocity components
and the pressure occupy different locations on the staggered MAC grid; the
velocity kernel is tensor valued; the representation must preserve the
discrete divergence constraint; and pressure-jump modes generate hydrostatic
nullspaces whose dimension changes with the number of disconnected objects.
Moreover, the boundary-density equation is dense and becomes increasingly ill
conditioned under refinement. The purpose of this work is to address these
issues in a single unfitted boundary algebraic framework.

The main contributions are as follows.
\begin{enumerate}
\item We derive explicit free-space velocity and pressure lattice
Green's functions for the two-dimensional steady MAC Stokes discretization
from regularized Laplace and biharmonic lattice Green's functions. In contrast
to using a scalar Laplace LGF only within a projection solve, the resulting
kernel pair represents the coupled discrete Stokes response, satisfies the
momentum equations and divergence constraint, and provides both homogeneous
layer potentials and inhomogeneous volume potentials.
\item We formulate a Stokes boundary algebraic equation on componentwise
staggered boundary layers. Local cut-point interpolation imposes Dirichlet
data, while sampled-normal rank completion removes one independent hydrostatic
mode for each disconnected obstacle.
\item We construct a componentwise discrete Calder\'on preconditioner from the
scalar Laplace lattice Green's function, including rank updates for the
constant modes on multiple objects. Nonzero body forces are handled by a
particular volume potential without changing the boundary operator, and padded
FFTs provide efficient lattice convolutions.
\item We evaluate the method on smooth and multiply connected interior flows,
a narrow-gap geometry, a body-forced problem, corner eddies, and exterior flows
with one and several obstacles. The tests assess accuracy, conditioning,
iteration counts, topology, and preservation of discrete incompressibility.
\end{enumerate}

The remainder of the paper is organized as follows.
Section~\ref{sec:stokes_lgf} derives and regularizes the MAC Stokes lattice
Green's functions. Section~\ref{sec:bae} develops the boundary algebraic
representation, and Section~\ref{sec:boundary-closure} imposes the physical
Dirichlet data and completes the hydrostatic nullspace. The discrete
Calder\'on preconditioner is introduced in
Section~\ref{sec:laplace-calderon-preconditioning}. Inhomogeneous body forcing
and FFT acceleration are treated in Sections~\ref{sec:inhomogeneous_forcing}
and \ref{sec:fft_acceleration}, respectively, and
Section~\ref{sec:end-to-end-algorithm} summarizes the complete computational
procedure. Section~\ref{sec:numerical-experiments} presents the numerical
experiments, followed by conclusions in Section~\ref{sec:conclusion}.

\section{MAC Stokes lattice Green's functions}
\label{sec:stokes_lgf}

We first recall the lattice Green's function for the two-dimensional
discrete Laplacian, since the Stokes lattice Green's function can be
written in terms of the harmonic/biharmonic lattice Green's function. Let
$m=(m_1,m_2)\in\mathbb Z^2$, and define the unscaled five-point
difference operator
\begin{align}
    (L\phi)(m)
    =
    \phi(m+e_1)+\phi(m-e_1)+\phi(m+e_2)+\phi(m-e_2)-4\phi(m).
\end{align}
Its Fourier symbol is
\begin{align}
    \lambda(\theta)
    =
    2\cos\theta_1+2\cos\theta_2-4,
    \qquad
    \theta=(\theta_1,\theta_2)\in[-\pi,\pi]^2 .
\end{align}
Formally, the Laplace lattice Green's function is
\begin{align}
    G(m)
    =
    \frac{1}{4\pi^2}
    \operatorname{f.p.}
    \int_{-\pi}^{\pi}\int_{-\pi}^{\pi}
    \frac{e^{i(m_1\theta_1+m_2\theta_2)}}{\lambda(\theta)}
    \,d\theta_1\,d\theta_2 ,
    \label{eq:laplace-lgf-formal}
\end{align}
where the integral is interpreted in the finite-part sense because
$\lambda(\theta)\sim-|\theta|^2$ near $\theta=0$. Equivalently, one may use
the normalized regularized representative
\begin{align}
    G_{\rm reg}(m)
    =
    \frac{1}{4\pi^2}
    \int_{-\pi}^{\pi}\int_{-\pi}^{\pi}
    \frac{e^{i(m_1\theta_1+m_2\theta_2)}-1}{\lambda(\theta)}
    \,d\theta_1\,d\theta_2 .
    \label{eq:laplace-lgf-regularized}
\end{align}
This fixes the additive constant by setting $G_{\rm reg}(0)=0$. The asymptotic properties and efficient computation of $G_{\rm reg}$ are well established \citep{martinsson2002asymptotic,borwein2013lattice,xia2026geom}.

The corresponding formal biharmonic lattice Green's function is
\begin{align}
    H(m)
    =
    \frac{1}{4\pi^2}
    \operatorname{f.p.}
    \int_{-\pi}^{\pi}\int_{-\pi}^{\pi}
    \frac{e^{i(m_1\theta_1+m_2\theta_2)}}{\lambda(\theta)^2}
    \,d\theta_1\,d\theta_2 .
    \label{eq:biharmonic-lgf-formal}
\end{align}

The singularity in \eqref{eq:biharmonic-lgf-formal} is stronger than that
of the Laplace kernel. Thus direct numerical evaluation of
\eqref{eq:biharmonic-lgf-formal} is inconvenient. In computation, we use a
regularized, gauge-fixed representative of $H$, obtained from the
regularized Laplace LGF. The Fourier-symbol derivation below is performed
with the formal kernels \eqref{eq:laplace-lgf-formal} and
\eqref{eq:biharmonic-lgf-formal}; the numerical implementation replaces
them by the chosen regularized representatives. This distinction fixes only
the pressure and velocity reference gauges and does not change the discrete
Stokes equations satisfied by the resulting kernels.

We consider the steady incompressible Stokes equations
\begin{subequations}
\begin{align}
    -\mu \Delta \bm u+\nabla p &= \bm f, \\
    \nabla\cdot \bm u &=0,
\end{align}
\end{subequations}
where $\bm u=(u,v)$, $p$ is the pressure, $\mu>0$ is the viscosity, and
$\bm f=(f^u,f^v)$ is the applied force. We discretize the equations by the
standard MAC scheme on a uniform grid of spacing $h$ (Figure~\ref{fig:mac_points}). 

\begin{figure}[htbp]
\centering
\includegraphics[width=0.35\textwidth]{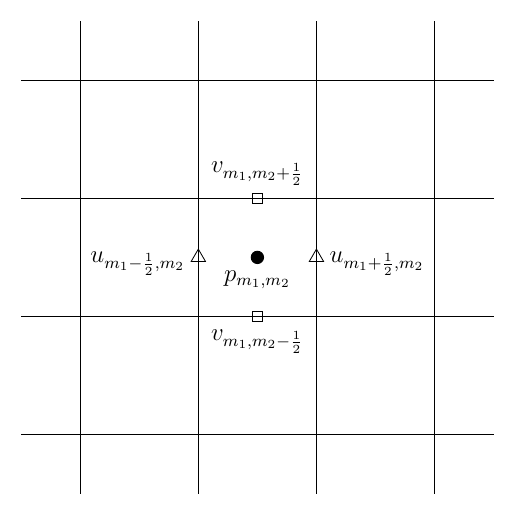}
\caption{MAC scheme}\label{fig:mac_points}
\end{figure}

The horizontal velocity
$u$ is stored at vertical cell faces,
\begin{align}
    u_{m_1+1/2,m_2}
    \approx
    u(x_{m_1+1/2},y_{m_2}),
\end{align}
the vertical velocity $v$ is stored at horizontal cell faces,
\begin{align}
    v_{m_1,m_2+1/2}
    \approx
    v(x_{m_1},y_{m_2+1/2}),
\end{align}
and the pressure is stored at cell centers,
\begin{align}
    p_{m_1,m_2}
    \approx
    p(x_{m_1},y_{m_2}).
\end{align}

Multiplying the momentum equations by $h^2$, the MAC discretization is
\begin{subequations}
\begin{align}
-\mu L u_{m_1+1/2,m_2}
+h\left(p_{m_1+1,m_2}-p_{m_1,m_2}\right)
&=
h^2 f^u_{m_1+1/2,m_2},
\label{eq:mac-u}
\\
-\mu L v_{m_1,m_2+1/2}
+h\left(p_{m_1,m_2+1}-p_{m_1,m_2}\right)
&=
h^2 f^v_{m_1,m_2+1/2},
\label{eq:mac-v}
\\
u_{m_1+1/2,m_2}-u_{m_1-1/2,m_2}
+
v_{m_1,m_2+1/2}-v_{m_1,m_2-1/2}
&=0 .
\label{eq:mac-div}
\end{align}
\end{subequations}
Here $L$ denotes the same unscaled five-point difference operator applied on
the corresponding velocity grid.

Define the unscaled difference operators
\begin{subequations}
\begin{align}
    D_x^+ \phi(m_1,m_2)
    &= \phi(m_1+1,m_2)-\phi(m_1,m_2), \\
    D_y^+ \phi(m_1,m_2)
    &= \phi(m_1,m_2+1)-\phi(m_1,m_2), \\
    D_x^- \phi(m_1,m_2)
    &= \phi(m_1,m_2)-\phi(m_1-1,m_2), \\
    D_y^- \phi(m_1,m_2)
    &= \phi(m_1,m_2)-\phi(m_1,m_2-1),
\end{align}
\end{subequations}
and
\begin{subequations}
\begin{align}
    D_{xx}\phi(m_1,m_2)
    &=
    \phi(m_1+1,m_2)-2\phi(m_1,m_2)+\phi(m_1-1,m_2), \\
    D_{yy}\phi(m_1,m_2)
    &=
    \phi(m_1,m_2+1)-2\phi(m_1,m_2)+\phi(m_1,m_2-1).
\end{align}
\end{subequations}

\begin{theorem}[MAC Stokes lattice Green's function]
\label{thm:mac-stokes-lgf}
Let $G$ be the Laplace lattice Green's function and let $H$ be a
biharmonic lattice Green's function satisfying $LH=G$ in the
distributional sense. Then the MAC Stokes
lattice Green's function is
\begin{align}
\mathcal S(m_1,m_2)
=
-\frac{1}{\mu}
\begin{bmatrix}
D_{yy}H(m_1,m_2)
&
- D_y^+D_x^+H(m_1,m_2-1)
\\[4pt]
- D_y^+D_x^+H(m_1-1,m_2)
&
D_{xx}H(m_1,m_2)
\end{bmatrix}.
\label{eq:stokes-lgf}
\end{align}
The associated pressure lattice Green's function is
\begin{align}
\mathcal P(m_1,m_2)
=
\frac{1}{h}
\begin{bmatrix}
D_x^-G(m_1,m_2)
\\[3pt]
D_y^-G(m_1,m_2)
\end{bmatrix}.
\label{eq:pressure-lgf}
\end{align}
The first column of $\mathcal S$ gives the velocity generated by a unit force
in the $x$-direction at a horizontal velocity node, and the second column
gives the velocity generated by a unit force in the $y$-direction at a
vertical velocity node.
\end{theorem}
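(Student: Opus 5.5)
The plan is to verify the claim directly: plug the proposed kernels into the scaled MAC equations \eqref{eq:mac-u}--\eqref{eq:mac-div} with a unit point force, and show all three equations hold. The only identities needed are that the unscaled difference operators commute (they are translation-invariant convolutions), plus $LG=\delta$ and $LH=G$. As a cross-check I would repeat the computation on Fourier symbols. Using $D_x^\pm \leftrightarrow \pm(e^{\pm i\theta_1}-1)$, $D_{xx}\leftrightarrow 2\cos\theta_1-2$, and so on, the kernel pair becomes the familiar continuous form $\hat{\mathcal S}=\mu^{-1}\lambda^{-2}(\lambda I-\hat d\hat d^{*})$, $\hat{\mathcal P}=\hat d/\lambda$, where $\hat d$ is the symbol of the discrete gradient. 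The algebra is then the same as for the continuous Stokeslet.

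\textbf{Step 1: fix the index conventions.} A force at the $u$-node $(m_1+\tfrac12,m_2)$ is labelled by the integer index $m$. I take the right-hand side $h^2 f^u$ to be the Kronecker delta at the origin. This is the normalization implicit in the factor $1/h$ in $\mathcal P$. For the first column, set $u(m)=-\mu^{-1}D_{yy}H(m)$, $v(m)=\mu^{-1}D_y^+D_x^+H(m_1-1,m_2)$ on the $v$-node labelled $(m_1,m_2+\tfrac12)$, and $p(m)=h^{-1}D_x^-G(m)$.

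\textbf{Step 2: the $u$-momentum equation.} We have $-\mu Lu = D_{yy}LH = D_{yy}G$. The pressure term is $h\,D_x^+p = D_x^+D_x^-G = D_{xx}G$. Their sum is $LG=\delta$, as required.

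\textbf{Step 3: the $v$-momentum equation.} We have $-\mu Lv=-D_y^+D_x^+G(m_1-1,m_2)$. The pressure term is $h\,D_y^+p=D_y^+D_x^-G(m_1,m_2)=D_y^+D_x^+G(m_1-1,m_2)$. These cancel, so the $v$-equation is homogeneous.

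\textbf{Step 4: the divergence.} The divergence at cell $m$ is $D_x^-u+D_y^-v$. Using $D_x^+\phi(m_1-1,\cdot)=D_x^-\phi(m_1,\cdot)$ and $D_y^-D_y^+=D_{yy}$, this equals
\[
-\mu^{-1}D_x^-D_{yy}H+\mu^{-1}D_{yy}D_x^-H=0 .
\]

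\textbf{Step 5: the second column.} The second column follows by the symmetric computation with the roles of $x$ and $y$ swapped. This also confirms that the off-diagonal entries are the transposes of each other under the half-cell shifts.

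\textbf{Main obstacle.} I expect the delicate step to be rigour rather than algebra. $G$ and $H$ are defined only as finite-part integrals, determined up to discrete harmonic and biharmonic polynomials. I would argue that the identities $LG=\delta$ and $LH=G$ hold pointwise for the regularized representatives modulo such polynomials. Every quantity entering the equations is a difference of $G$ or a second difference of $H$ composed with $L$, and these differences annihilate the gauge ambiguities: $D_{yy}$ and $D_x^+D_y^+$ kill the affine-plus-$|m|^2$-type terms that enter through $LH=G$ modulo constants. Once that is settled, the remaining work is the careful bookkeeping of half-index shifts in Steps 2--4.
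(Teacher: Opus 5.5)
Your Steps 2--5 are correct and give a complete proof, but by a different route from the paper. The paper works in staggered Fourier variables. It solves the $3\times3$ symbol system for a unit $x$-force, using $\gamma_1^2+\gamma_2^2=\lambda$, to get $\hat p^{(1)},\hat u^{(1)},\hat v^{(1)}$. It then recognizes these multipliers as the symbols of $h^{-1}D_x^-G$, $-\mu^{-1}D_{yy}H$ and $\mu^{-1}D_y^+D_x^+H(m_1-1,m_2)$.

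You instead verify the candidate pair directly on the lattice. You use only $LG=\delta$, $LH=G$, commutativity of the difference operators, and shift identities such as $D_x^+\phi(m_1-1,\cdot)=D_x^-\phi(m_1,\cdot)$. Your half-index bookkeeping and the normalization $h^2f^u=\delta$ both match the paper, whose transformed right-hand side is $h^2e^{-i\theta_1/2}$.

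\begin{itemize}
\item \textbf{The paper's route} is constructive. It shows where the formulas come from and exposes the projector structure later reused to motivate the Laplace preconditioner. The cost is that it manipulates transforms of non-summable lattice functions, which only make sense in the finite-part sense.
\item \textbf{Your route} needs no transforms, so it is more elementary and fully rigorous. It also shows that the divergence equation holds for any $H$ whatsoever, so the representative of $H$ matters only through $LH=G$. What it does not provide is a derivation, or the identification with the finite-part Fourier solution rather than merely \emph{a} point-force solution. Given the paper's gauge remark, that is all the theorem really asserts.
\end{itemize}

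Two small corrections.

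First, under the theorem's hypothesis $LH=G$ holds exactly, so your ``main obstacle'' paragraph is not needed. As stated it is also too generous: the differences absorb a discrepancy $LH-G$ only when that discrepancy is affine. For example, $LH-G=m_1m_2$ leaves $D_x^+D_y^+(m_1m_2)=1$ in the $v$-equation.

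Second, your Fourier cross-check has a sign slip. Take $\hat d=(e^{i\theta_1}-1,e^{i\theta_2}-1)^T$. Then $\hat d^*\hat d=-\lambda\ge 0$, so the continuum $|k|^2I-kk^T$ corresponds to $-\lambda I-\hat d\hat d^*$. This gives
\[
\hat{\mathcal S}=-\mu^{-1}\lambda^{-2}\bigl(\lambda I+\hat d\hat d^*\bigr),
\]
consistent with the paper's $-h^2(\mu\lambda)^{-1}\Pi_h$. The pressure entries are $-\overline{\hat d_j}/(h\lambda)$, which are the symbols of $h^{-1}D_x^-G$ and $h^{-1}D_y^-G$. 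Your $\hat d/\lambda$ instead corresponds to $D^+G$. As written, your formula would give the $(1,1)$ entry $\mu^{-1}\lambda^{-2}(\lambda-4\sin^2(\theta_1/2))$ instead of $\mu^{-1}\lambda^{-2}\,4\sin^2(\theta_2/2)$. Since the cross-check is auxiliary, this does not affect your main argument.
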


\begin{proof}
We give the derivation for a unit force in the $x$-direction. The derivation
for a unit force in the $y$-direction is identical after interchanging $x$
and $y$.

Use the staggered Fourier transforms
\begin{subequations}
\begin{align}
    \widehat u(\theta)
    &=
    h^2\sum_{m\in\mathbb Z^2}
    u_{m_1+1/2,m_2}
    e^{-i((m_1+1/2)\theta_1+m_2\theta_2)}, \\
    \widehat v(\theta)
    &=
    h^2\sum_{m\in\mathbb Z^2}
    v_{m_1,m_2+1/2}
    e^{-i(m_1\theta_1+(m_2+1/2)\theta_2)}, \\
    \widehat p(\theta)
    &=
    h^2\sum_{m\in\mathbb Z^2}
    p_{m_1,m_2}
    e^{-i(m_1\theta_1+m_2\theta_2)} .
\end{align}
\end{subequations}
Let
\begin{align}
    s_j=\sin(\theta_j/2),
    \qquad
    \gamma_j=e^{i\theta_j/2}-e^{-i\theta_j/2}=2i s_j,
    \qquad j=1,2 .
\end{align}
For a unit $x$-force at the horizontal velocity node $(1/2,0)$, the
transformed MAC equations become
\begin{subequations}
\begin{align}
    -\mu\lambda(\theta)\widehat u
    +h\gamma_1\widehat p
    &=
    h^2 e^{-i\theta_1/2},
    \label{eq:fourier-u-xforce}
    \\
    -\mu\lambda(\theta)\widehat v
    +h\gamma_2\widehat p
    &=
    0,
    \label{eq:fourier-v-xforce}
    \\
    \gamma_1\widehat u+\gamma_2\widehat v
    &=0 .
    \label{eq:fourier-div}
\end{align}
\end{subequations}
Since
\begin{align}
    \lambda(\theta)
    =
    -4(s_1^2+s_2^2),
\end{align}
the incompressibility constraint projects the forcing onto the discrete
divergence-free subspace. Solving
\eqref{eq:fourier-u-xforce}--\eqref{eq:fourier-div} gives
\begin{subequations}
\begin{align}
    \widehat p^{(1)}
    &=
    \frac{ih}{2}
    \frac{4s_1 e^{-i\theta_1/2}}{\lambda(\theta)},
    \label{eq:p-hat-xforce}
    \\
    \widehat u^{(1)}
    &=
    \frac{h^2 e^{-i\theta_1/2}}{\mu}
    \frac{4s_2^2}{\lambda(\theta)^2},
    \label{eq:u-hat-xforce}
    \\
    \widehat v^{(1)}
    &=
    -\frac{h^2 e^{-i\theta_1/2}}{\mu}
    \frac{4s_1s_2}{\lambda(\theta)^2}.
    \label{eq:v-hat-xforce}
\end{align}
\end{subequations}
The superscript $(1)$ denotes forcing in the $x$-direction.

We now identify the inverse transforms. Since
\begin{align}
    D_{yy}H
    \quad\longleftrightarrow\quad
    \left(e^{i\theta_2}+e^{-i\theta_2}-2\right)
    \frac{1}{\lambda(\theta)^2}
    =
    -\frac{4s_2^2}{\lambda(\theta)^2},
\end{align}
equation \eqref{eq:u-hat-xforce} gives
\begin{align}
    u^{(1)}_{m_1+1/2,m_2}
    =
    -\frac{1}{\mu}
    D_{yy}H(m_1,m_2).
\end{align}
Similarly,
\begin{align}
    D_y^+D_x^+H(m_1-1,m_2)
    \quad\longleftrightarrow\quad
    -\frac{4s_1s_2 e^{-i\theta_1/2}e^{i\theta_2/2}}{\lambda(\theta)^2},
\end{align}
with the staggered inverse phase for $v$, so that
\begin{align}
    v^{(1)}_{m_1,m_2+1/2}
    =
    \frac{1}{\mu}
    D_y^+D_x^+H(m_1-1,m_2).
\end{align}
The pressure follows from \eqref{eq:p-hat-xforce}:
\begin{align}
    p^{(1)}_{m_1,m_2}
    =
    \frac{1}{h}
    D_x^-G(m_1,m_2).
\end{align}

For a unit force in the $y$-direction, the same calculation yields
\begin{subequations}
\begin{align}
    \widehat p^{(2)}
    &=
    \frac{ih}{2}
    \frac{4s_2 e^{-i\theta_2/2}}{\lambda(\theta)}, \\
    \widehat u^{(2)}
    &=
    -\frac{h^2 e^{-i\theta_2/2}}{\mu}
    \frac{4s_1s_2}{\lambda(\theta)^2}, \\
    \widehat v^{(2)}
    &=
    \frac{h^2 e^{-i\theta_2/2}}{\mu}
    \frac{4s_1^2}{\lambda(\theta)^2}.
\end{align}
\end{subequations}
Transforming back gives
\begin{subequations}
\begin{align}
    u^{(2)}_{m_1+1/2,m_2}
    &=
    \frac{1}{\mu}
    D_y^+D_x^+H(m_1,m_2-1), \\
    v^{(2)}_{m_1,m_2+1/2}
    &=
    -\frac{1}{\mu}
    D_{xx}H(m_1,m_2), \\
    p^{(2)}_{m_1,m_2}
    &=
    \frac{1}{h}
    D_y^-G(m_1,m_2).
\end{align}
\end{subequations}
Collecting the two velocity responses as the columns of $\mathcal S$, and
collecting the two pressure responses as $\mathcal P$, gives
\eqref{eq:stokes-lgf} and \eqref{eq:pressure-lgf}.
\end{proof}

\begin{remark}[Regularization and gauge]
The proof above uses the Fourier multipliers $1/\lambda$ and
$1/\lambda^2$ in the finite-part sense. In the actual computation, however,
one should not evaluate \eqref{eq:biharmonic-lgf-formal} directly. Instead,
a regularized representative of $H$ is used. Different regularizations
correspond to different choices of additive discrete harmonic or biharmonic
null-space components. These choices fix the pressure and velocity gauges.
Once a gauge is fixed, the formulas \eqref{eq:stokes-lgf} and
\eqref{eq:pressure-lgf} define the corresponding discrete Stokes kernel. The
same gauge must be used consistently throughout the computation.
\end{remark}

Although $H$ satisfies
\begin{align}
    LH=G
\end{align}
on the infinite lattice, direct computation of $H$ from this equation is
inconvenient. A finite-domain truncation requires artificial boundary
conditions for a non-decaying biharmonic kernel, and direct quadrature of
\eqref{eq:biharmonic-lgf-formal} is strongly singular at the zero Fourier
mode.

Instead, we compute a regularized representative of $H$ from the regularized
Laplace LGF. The Duffin--Shelly identity \citep{DuffinShelly1958Polyharmonic} gives
\begin{align}
16H(m_1,m_2)
={}&
m_1^2\Bigl(
2G(m_1,m_2)
+G(m_1,m_2+1)
+G(m_1,m_2-1)
\Bigr)
\nonumber\\
&+
m_2^2\Bigl(
2G(m_1,m_2)
+G(m_1+1,m_2)
+G(m_1-1,m_2)
\Bigr)
\nonumber\\
&-
2G(m_1,m_2).
\label{eq:duffin-shelly-H}
\end{align}
Here $G$ should be understood as the same regularized Laplace LGF used in
\eqref{eq:laplace-lgf-regularized}. Equation
\eqref{eq:duffin-shelly-H} fixes a compatible regularization of $H$. This is
the representative used in the numerical construction of the Stokes LGF
through \eqref{eq:stokes-lgf}.
In computation, the regularized kernels $G$ and $H$ are tabulated for lattice indices ranging from $0$ to $N$ in each coordinate and extended by symmetry as needed.

\section{Stokes boundary algebraic equations}\label{sec:bae}
We now use the regularized velocity and pressure LGFs to represent
discrete Stokes fields generated by sources on a thin lattice layer. Throughout
this section, $\mathcal S$ and $\mathcal P$ denote the gauge-fixed kernels
constructed in Section~\ref{sec:stokes_lgf}.

\subsection{LGF representation of discrete Stokes solutions}
\begin{theorem}\label{thm:rep1}
Let $C=\bigcup_n C_n$ be a union of cells indexed by their centers
$n$. The following velocity and pressure potentials satisfy the homogeneous
Stokes difference equations at every lattice point whose stencil does not
intersect $C$:
\begin{align}
\bm{u}(m)=
\left[
\begin{array}{c}
u(m+e_1/2)\\
v(m+e_2/2)
\end{array}
\right] = \sum_n\mathcal{S}(m,n)\left[
\begin{array}{c}
q^u_{n+e_1/2}\\
q^v_{n+e_2/2}\\
\end{array}
\right]
\end{align}
and \begin{align}
p(m) = \sum_n\mathcal{P}(m,n)\left[
\begin{array}{c}
q^u_{n+e_1/2}\\
q^v_{n+e_2/2}\\
\end{array}
\right]
\end{align}
Here $m=(m_1,m_2)$, $n=(n_1,n_2)$, $e_1=(1,0)$, and
$e_2=(0,1)$. The tensors $\mathcal S$ and $\mathcal P$ map the staggered
source components to velocity and pressure, respectively.
\end{theorem}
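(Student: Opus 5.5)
The plan is to reduce the statement to linearity and translation invariance, using Theorem~\ref{thm:mac-stokes-lgf} as the only substantive input. Interpret $\mathcal S(m,n)$ and $\mathcal P(m,n)$ as the translated kernels $\mathcal S(m-n)$ and $\mathcal P(m-n)$. By Theorem~\ref{thm:mac-stokes-lgf}, the first column of $\mathcal S(\cdot-n)$, paired with the first component of $\mathcal P(\cdot-n)$, solves the MAC system \eqref{eq:mac-u}--\eqref{eq:mac-div}. The right-hand side is a unit $h^2 f^u$ at the $u$-node $n+e_1/2$ and zero everywhere else, and the divergence equation holds everywhere. The second column likewise corresponds to a unit $v$-force at $n+e_2/2$.

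Translation invariance is immediate: the operators $L$, $D_x^\pm$ and $D_y^\pm$ commute with lattice shifts. Any other fixed additive gauge constants in $G$ and $H$ are annihilated by the differences appearing in the MAC equations, so the regularized representatives chosen in the Remark change nothing.

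The key steps are as follows.
\begin{enumerate}
\item Write the MAC residual operator $\mathcal A(\bm u,p)$, the left-hand sides of \eqref{eq:mac-u}--\eqref{eq:mac-div}.
\item Note that $\mathcal A$ is linear and has finite stencils. When $C$ is a finite union of cells, the sums are finite, so $\mathcal A$ commutes with the summation over $n$.
\item Conclude that
\begin{align}
\mathcal A(\bm u,p)(m)=h^2\sum_n\bigl(q^u_{n+e_1/2}\,\delta_{m,\,n+e_1/2}\,\bm e_u+q^v_{n+e_2/2}\,\delta_{m,\,n+e_2/2}\,\bm e_v\bigr),
\end{align}
with a zero divergence residual everywhere.
\item Observe that this residual vanishes at every momentum equation whose center node is not one of the source faces $n+e_1/2$ or $n+e_2/2$ with $n\in C$.
\end{enumerate}
Each such face lies on the boundary of a cell $C_n$. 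If the stencil at $m$ does not intersect $C$, then in particular its center is not such a face, so the homogeneous equations hold at $m$. The divergence equation holds at every cell center regardless.

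The main obstacle is the bookkeeping of staggered indices. I must check that the phase conventions in the proof of Theorem~\ref{thm:mac-stokes-lgf}, namely the offsets $(m_1-1,m_2)$ and $(m_1,m_2-1)$ in the off-diagonal entries, really place the unit force at $n+e_1/2$ and $n+e_2/2$ after translation by $n$. I must also confirm that "stencil does not intersect $C$" excludes exactly those faces, which I would state by identifying a cell with its closure including its faces.

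A secondary point concerns well-definedness, not the homogeneity property. If $C$ were infinite, interchanging $\mathcal A$ with the sum would require convergence. The growth of $H$ is $O(|m|^2\log|m|)$, so second differences of $H$ grow only logarithmically. For the finite layers used here no convergence question arises, and I would state the theorem under that assumption.
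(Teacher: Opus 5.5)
Your proposal is correct and is essentially the paper's argument: the paper likewise takes the kernel identities $-\mu\Delta_h\mathcal S+I\nabla_h\mathcal P=h^{-2}\delta I$, $\nabla_h\cdot\mathcal S=0$ from Theorem~\ref{thm:mac-stokes-lgf} and pulls the difference operators inside the translated sum by linearity. It then observes that the resulting delta sum vanishes off the source cells, while the divergence vanishes identically. Your extra bookkeeping (finite support for the interchange, staggered offsets, closed cells, gauge independence) only makes explicit what the paper leaves implicit, and the extra factor $h^2$ in your step 3 is a harmless normalization slip: with your convention of a unit $h^2 f^u$, the unscaled residual is just $\sum_n q\,\delta$.
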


\begin{proof}
In the operator form, the velocity and pressure tensors satisfy
\begin{subequations}
\begin{align}
-\mu \Delta_h \mathcal{S}(m) + I\nabla_h\mathcal{P}(m) &= \frac{1}{h^2}\delta_{m}I,\\
\nabla_h \cdot \mathcal{S}(m) &= \bm{0}.
\end{align}
\end{subequations}
For the velocity equations,
\begin{subequations}
\begin{align}
&-\mu\Delta_h \bm{u}(m)+\nabla_h p(m) \\
=&-\mu \Delta_h \sum_n\mathcal{S}(m,n)\left[
\begin{array}{c}
q^u_{n+e_1/2}\\
q^v_{n+e_2/2}\\
\end{array}
\right]+\nabla_h \sum_n\mathcal{P}(m,n)\left[
\begin{array}{c}
q^u_{n+e_1/2}\\
q^v_{n+e_2/2}\\
\end{array}
\right] \\
=& \sum_n \left(-\mu\Delta_h\mathcal{S}(m,n)+I\nabla_h \mathcal{P} \right)\left[
\begin{array}{c}
q^u_{n+e_1/2}\\
q^v_{n+e_2/2}\\
\end{array}
\right]\\
=& \frac{1}{h^2}\sum_n \left[
\begin{array}{c}
\delta_{m-n}q^u_{n+e_1/2}\\
\delta_{m-n}q^v_{n+e_2/2}\\
\end{array}
\right]
\end{align}
\end{subequations}
which vanishes whenever $m\notin C$.

The discrete incompressibility condition is satisfied identically:
\begin{subequations}
\begin{align}
\nabla_h \cdot\bm{u}(m) &= \nabla_h \cdot \sum_n\mathcal{S}(m,n)\left[
\begin{array}{c}
q^u_{n+e_1/2}\\
q^v_{n+e_2/2}\\
\end{array}
\right]\\
&=\sum_n\nabla_h \cdot\mathcal{S}(m,n)\left[
\begin{array}{c}
q^u_{n+e_1/2}\\
q^v_{n+e_2/2}\\
\end{array}
\right]=\bm{0}
\end{align}
\end{subequations}
for any densities.
\end{proof}

\subsection{Staggered-grid point sets}
We next define the lattice sets used to separate interior and exterior
stencils. We will focus on a bounded domain $\Omega$, and follow the difference-potentials
convention of \citet{ryaben2012method}. (For an exterior problem, the roles of the
interior and exterior sets are reversed.)

Embed $\Omega$ in the infinite lattice $h\mathbb Z^2$ with spacing
$h$. The sets $M_+^u$ and $M_+^v$ contain the
$u$- and $v$-velocity nodes inside $\Omega$, whereas $M_-^u$ and $M_-^v$
contain the corresponding exterior nodes.

At a cell-centered lattice point $(m_1h,m_2h)$, the standard
five-point stencil is
\begin{align}
N^5_{m_1,m_2} = \{(m_1h,m_2h),((m_1\pm 1)h,m_2h),(m_1h,(m_2 \pm 1)h)\}.
\end{align}

Taking the union of these stencils over the $u$-velocity nodes gives
\begin{align}
N^u_\pm =
\bigcup_{((m_1+1/2)h,m_2h)\in M^u_\pm}
N^5_{m_1+1/2,m_2}.
\end{align}
Their intersection $\gamma^u:=N_+^u\cap N_-^u$ is the discrete
interface layer for the $u$ component. We further split it into the interior
and exterior layers $\gamma_+^u:=\gamma^u\cap M_+^u$ and
$\gamma_-^u:=\gamma^u\cap M_-^u$ (Figure~\ref{fig:gamma_u}).

\begin{figure}[htbp]
    \centering
    \begin{subfigure}{0.35\textwidth}
        \centering
        \includegraphics[width=\textwidth]{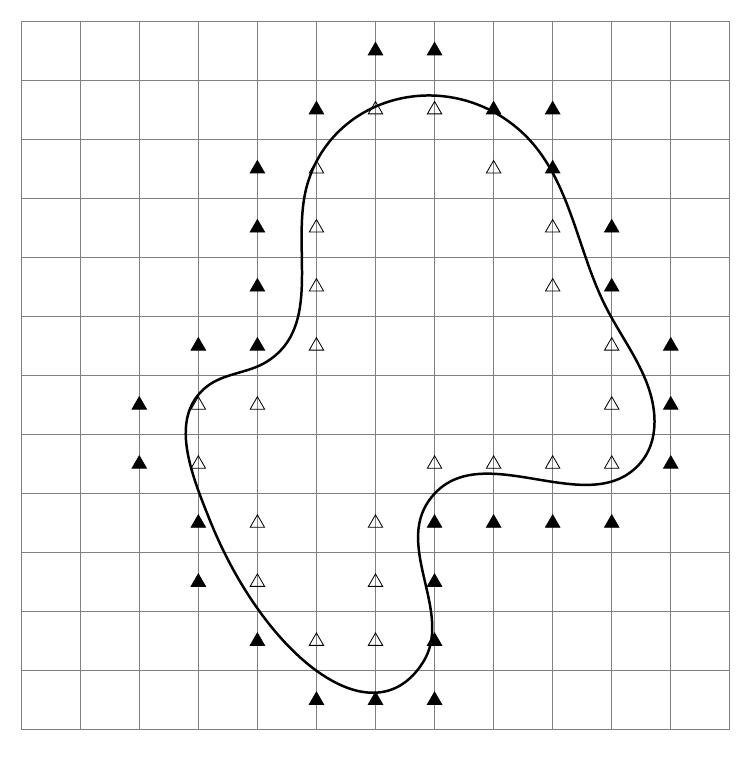}
        \caption{$\gamma^u_\pm$}
        \label{fig:gamma_u}
    \end{subfigure}
    ~
    \begin{subfigure}{0.35\textwidth}
        \centering
        \includegraphics[width=\textwidth]{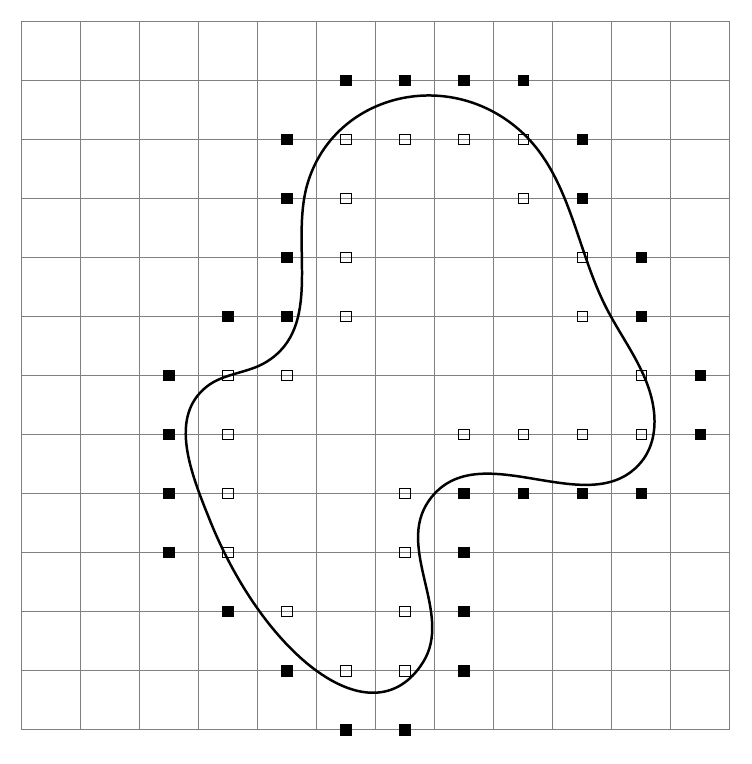}
        \caption{$\gamma^v_\pm$}
        \label{fig:gamma_v}
    \end{subfigure}
    \caption{Illustration of $\gamma^u_\pm$ and $\gamma^v_\pm$ (solid triangles and squares denote $\gamma^u_-$ and $\gamma^v_-$ and hollow triangles and squares denote $\gamma^u_+$ and $\gamma^v_+$)}
    \label{fig:gamma}
\end{figure}

The sets $M_\pm^v$, $N_\pm^v$, $\gamma^v$, and
$\gamma_\pm^v$ (Figure~\ref{fig:gamma_v}) are defined analogously after replacing the $u$-grid location
$(m_1+1/2,m_2)$ by the $v$-grid location $(m_1,m_2+1/2)$.

Analogous pressure sets $M_\pm^p$ may be introduced, but no
independent pressure-layer density is required: the pressure is recovered from
the two velocity-source densities through the pressure LGF $\mathcal P$.

\subsection{Boundary algebraic trace equations}
Theorem~\ref{thm:rep1} pairs $q^u$ and $q^v$ on the two staggered
faces of a common cell. Near an unfitted boundary, however,
$|\gamma^u|$ and $|\gamma^v|$ need not agree. The next result therefore
allows the two densities to be supported independently on their respective
exterior layers.

\begin{corollary}\label{prop:rep2}
Given the exterior boundary points $\gamma^u_-$ and $\gamma^v_-$ and the densities $q^u$ and $q^v$ defined at these points, the velocities and associated pressures
\begin{align}
\bm{u}^u(m)= \sum_{n+e_1/2\in \gamma^u_-}\mathcal{S}(m,n)\left[
\begin{array}{c}
q^u_{n+e_1/2}\\
0\\
\end{array}
\right],\quad
p^u(m) = \sum_{n+e_1/2\in \gamma^u_-}\mathcal{P}(m,n)\left[
\begin{array}{c}
q^u_{n+e_1/2}\\
0\\
\end{array}
\right]
\end{align}
and
\begin{align}
\bm{u}^v(m)= \sum_{n+e_2/2\in \gamma^v_-}\mathcal{S}(m,n)\left[
\begin{array}{c}
0\\
q^v_{n+e_2/2}\\
\end{array}
\right],\quad
p^v(m) = \sum_{n+e_2/2\in \gamma^v_-}\mathcal{P}(m,n)\left[
\begin{array}{c}
0\\
q^v_{n+e_2/2}\\
\end{array}
\right]
\end{align}
satisfy the Stokes difference equations
\eqref{eq:mac-u}--\eqref{eq:mac-div}; by linearity, so does their sum
$(\bm u^u+\bm u^v,p^u+p^v)$.
\end{corollary}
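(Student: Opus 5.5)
The plan is to reduce the corollary to Theorem~\ref{thm:mac-stokes-lgf} and to linearity, repeating the argument of Theorem~\ref{thm:rep1} one velocity component at a time. The statement has to be read with the implicit qualifier that the homogeneous Stokes equations hold at every lattice point whose stencil does not meet the source support. For interior problems these are the points of $M_+^u$, $M_+^v$ and the interior cell centres. By construction $\gamma^u_-\subset M^u_-$ and $\gamma^v_-\subset M^v_-$, so these points are disjoint from the supports. The proof should say this explicitly.

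First, I read off from Theorem~\ref{thm:mac-stokes-lgf} the unit responses. The first column of $\mathcal S(\cdot-n)$ together with the first entry of $\mathcal P(\cdot-n)$ solves \eqref{eq:mac-u}--\eqref{eq:mac-div} with right-hand side $h^2 f^u=\delta$ at the single $u$-node $n+e_1/2$ and $f^v=0$. The $v$-momentum equation and the divergence equation are homogeneous everywhere. Similarly, the second column gives a unit $v$-force at $n+e_2/2$.

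The key observation is that the $u$- and $v$-columns are individually exact solutions. Theorem~\ref{thm:rep1} never needed $q^u$ and $q^v$ to sit on faces of the same cell. Hence I may set $q^v\equiv0$ in Theorem~\ref{thm:rep1} and sum only over $n$ with $n+e_1/2\in\gamma^u_-$. This is a finite sum, so translation invariance of the kernels and linearity of $L$, $D^\pm$ allow the difference operators to be exchanged with the summation.

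Then I compute the residuals. For $(\bm u^u,p^u)$:
\begin{itemize}
\item the $u$-momentum residual at a $u$-node $m+e_1/2$ equals $\sum_n \delta_{m-n}q^u_{n+e_1/2}$, which is $q^u_{m+e_1/2}$ if $m+e_1/2\in\gamma^u_-$ and zero otherwise;
\item the $v$-momentum residual vanishes identically;
\item the discrete divergence vanishes identically, by the divergence row of Theorem~\ref{thm:mac-stokes-lgf} (equivalently $\gamma_1\widehat u^{(1)}+\gamma_2\widehat v^{(1)}=0$).
\end{itemize}
Hence \eqref{eq:mac-u}--\eqref{eq:mac-div} with $\bm f=0$ hold at every node off $\gamma^u_-$, in particular on $M_+^u$, $M_+^v$ and all cell centres. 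The same argument for $(\bm u^v,p^v)$ gives residual supported on $\gamma^v_-$ only. Adding the two pairs, the residual of the sum is the sum of the residuals, since the MAC operator is linear. So the sum satisfies the homogeneous equations away from $\gamma^u_-\cup\gamma^v_-$.

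The main obstacle is the gauge issue. The kernels are built from regularized representatives of $G$ and $H$ via \eqref{eq:duffin-shelly-H}, not from the formal finite-part integrals. I therefore need the identities $-\mu L\mathcal S+h D^+\mathcal P=\delta I$ and $D^-\!\cdot\mathcal S=0$ to hold exactly for these representatives.

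I would settle this as follows. First, the regularized $G$ satisfies $LG=\delta$ exactly: the constant subtracted in \eqref{eq:laplace-lgf-regularized} is annihilated by $L$. Second, \eqref{eq:duffin-shelly-H} yields an $H$ with $LH=G$ up to a discrete harmonic polynomial, which I would verify directly. Third, I would expand the momentum residual of the first column. The $u$-row is $D_{yy}LH-D_{xx}G$. Using $L=D_{xx}+D_{yy}$ and $LH=G+\text{(harmonic)}$, this reduces to $D_{yy}G+D_{xx}G$ plus $D_{yy}$ applied to a harmonic polynomial. The first part equals $LG=\delta$. The second part must vanish or be absorbed, and this is exactly the gauge remark. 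The divergence condition is automatic, because $D_x^-D_{yy}H-D_y^-D_y^+D_x^+H(\cdot-e_1)=0$ is a pure commutation identity of difference operators, valid for any $H$.
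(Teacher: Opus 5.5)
Your proposal is correct and is essentially the paper's proof: the paper also observes that the densities in Theorem~\ref{thm:rep1} are arbitrary, so you may zero out $q^v$ (resp.\ $q^u$) and let the supports $\gamma^u_-$ and $\gamma^v_-$ differ, and it then concludes by linearity. The gauge check you add is not part of the paper's argument, which simply inherits the kernel identities from Theorems~\ref{thm:mac-stokes-lgf} and~\ref{thm:rep1}; if you want to close it, note that the Duffin--Shelly representative satisfies $L^2H=\delta$, so $LH-G$ is discrete harmonic with logarithmic growth and hence equals the constant $1/(2\pi)$ (e.g.\ $LH(0)=G(1,1)/2=1/(2\pi)$ while $G(0)=0$), which every second difference in the residuals annihilates (and the $u$-row residual is $D_{yy}LH+D_{xx}G$, with a plus sign).
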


\begin{proof}
The densities $q^u$ and $q^v$ are arbitrary in the proof of Theorem~\ref{thm:rep1}, thus the pairs $(\bm{u}^u,p^u)$ and $(\bm{u}^v,p^v)$ satisfy the Stokes difference equations, respectively. Since the operators are linear, the additions also satisfy the difference equations.
\end{proof}

In particular, restricting the target points to $\gamma^u_+$ and $\gamma^u_-$ leads to
\begin{subequations}
\begin{align}
u_{\gamma^u_-} &= \sum_{n+e_1/2\in \gamma^u_-}\mathcal{S}^{-}_{11}(m,n)q^u_{n+e_1/2} + \sum_{n+e_2/2\in \gamma^v_-}\mathcal{S}^{-}_{12}(m,n)q^v_{n+e_2/2},\\
u_{\gamma^u_+} &= \sum_{n+e_1/2\in \gamma^u_-}\mathcal{S}^{+}_{11}(m,n)q^u_{n+e_1/2} + \sum_{n+e_2/2\in \gamma^v_-}\mathcal{S}^{+}_{12}(m,n)q^v_{n+e_2/2},
\end{align}
\end{subequations}
Here $\mathcal S^-$ denotes evaluation from the exterior source
layers to the exterior target layers, whereas $\mathcal S^+$ denotes
evaluation from the same sources to the interior target layers.

Similarly, restricting target points to $\gamma^v_\pm$ gives
\begin{subequations}
\begin{align}
v_{\gamma^v_-} &= \sum_{n+e_1/2\in \gamma^u_-}\mathcal{S}^{-}_{21}(m,n)q^u_{n+e_1/2} + \sum_{n+e_2/2\in \gamma^v_-}\mathcal{S}^{-}_{22}(m,n)q^v_{n+e_2/2},\\
v_{\gamma^v_+} &= \sum_{n+e_1/2\in \gamma^u_-}\mathcal{S}^{+}_{21}(m,n)q^u_{n+e_1/2} + \sum_{n+e_2/2\in \gamma^v_-}\mathcal{S}^{+}_{22}(m,n)q^v_{n+e_2/2}.
\end{align}
\end{subequations}

In compact block-matrix form, the mappings from the unknown density vector $\bm{q} = [q^u, q^v]^T$ to the velocity boundary traces are:
\begin{align}
\left(
\begin{array}{c}
u_{\gamma^u_-}\\
v_{\gamma^v_-}\\
\end{array}\right)
=\left[\begin{array}{cc}
\mathcal S^-_{11} & \mathcal S^-_{12}\\
\mathcal S^-_{21} & \mathcal S^-_{22}\\
\end{array}
\right]\left(
\begin{array}{c}
q^u_{n+e_1/2}\\
q^v_{n+e_2/2}
\end{array}
\right)\Rightarrow \bm{u}_{-} = S_-\bm{q}
\end{align}
and
\begin{align}
\left(
\begin{array}{c}
u_{\gamma^u_+}\\
v_{\gamma^v_+}\\
\end{array}\right)
=\left[\begin{array}{cc}
\mathcal S^+_{11} & \mathcal S^+_{12}\\
\mathcal S^+_{21} & \mathcal S^+_{22}\\
\end{array}
\right]\left(
\begin{array}{c}
q^u_{n+e_1/2}\\
q^v_{n+e_2/2}
\end{array}
\right)\Rightarrow \bm{u}_{+} = S_+\bm{q}
\end{align}
where $\bm u_\pm$ are the velocity traces on
$(\gamma_\pm^u,\gamma_\pm^v)$, respectively, and $S_\pm$ are the block
matrices assembled from $\mathcal S^\pm$.


\subsection{Nullspace and constrained invertibility}

\begin{lemma}[Energy identity for the staggered Stokes single layer]
\label{lem:stokes-single-layer-energy}
Let \(\bm q=(q^u,q^v)\) be a density supported on
\(\gamma_-:=\gamma_-^u\cup\gamma_-^v\), extended by zero to the whole
infinite staggered lattice. Let
\[
    \bm u = \mathcal S \bm q,\qquad p=\mathcal P \bm q
\]
be the corresponding regularized Stokes LGF potential. Assume that \(\bm q\)
satisfies the force-balance conditions
\begin{align}
    \sum_{\gamma_-^u} q^u=0,
    \qquad
    \sum_{\gamma_-^v} q^v =0,
\end{align}
so that the net force in each component vanishes and the associated velocity
field has finite discrete energy. Then
\begin{align}
    \langle \bm q,S_-\bm q\rangle_{\gamma_-}
    =
    \mu \|\nabla_h \bm u\|_{\ell^2(\mathbb Z^2)}^2
    \ge 0 .
    \label{eq:stokes-layer-energy}
\end{align}
In particular, \(S_-\) is positive semidefinite on the force-balanced
density space.
\end{lemma}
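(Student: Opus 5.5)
The identity is a discrete Green's identity, obtained by summation by parts over the whole lattice. We pass through Fourier space to avoid boundary terms at infinity.

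\textbf{Step 1 (field equations).} Let \(\bm q\) be extended by zero. By Theorem~\ref{thm:rep1} and the kernel identities in its proof, the pair \((\bm u,p)=(\mathcal S\bm q,\mathcal P\bm q)\) satisfies
\[
-\mu\Delta_h\bm u+\nabla_h p=h^{-2}\bm q,\qquad \nabla_h\cdot\bm u=0
\]
on all of \(\mathbb Z^2\), with no exceptional points. Take the discrete inner product of the momentum equation with \(\bm u\) on the staggered lattice. The right side gives \(h^{-2}\sum_{\gamma_-}\bm q\cdot\bm u\). Since \(\bm q\) is supported on \(\gamma_-\), the trace of \(\bm u\) there is exactly \(S_-\bm q\). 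This term is therefore \(\langle\bm q,S_-\bm q\rangle_{\gamma_-}\), with the scaling \(h^{-2}\) absorbed into the inner-product normalization so as to match the paper's convention.

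On the left we sum by parts. We have \(\langle-\Delta_h\bm u,\bm u\rangle=\|\nabla_h\bm u\|^2\), since the five-point Laplacian equals \(-\sum_j D_j^{+*}D_j^+\). We also have \(\langle\nabla_h p,\bm u\rangle=-\langle p,\nabla_h\cdot\bm u\rangle=0\), since the MAC gradient and divergence are negative adjoints on the staggered grid. Together these give the identity, and positivity follows because the right side is a sum of squares. Gauge constants in \(\bm u\) or \(p\) do not matter: \(\sum\bm q\cdot\mathbf c=0\) by the force-balance condition, and \(\langle\nabla_h\cdot\bm u,c\rangle=0\).

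\textbf{Step 2 (justifying the summations by parts).} This is the main obstacle. The kernels do not decay: \(\mathcal S\) grows like \(\log|m|\), and \(p\) decays only like \(|m|^{-1}\). We must therefore show that the infinite sums converge and that the boundary terms vanish. I would work in Fourier space using the symbols from the proof of Theorem~\ref{thm:mac-stokes-lgf}. Write \(\widehat{\bm u}=\mu^{-1}h^2\,\widehat M(\theta)\widehat{\bm q}\) with
\[
\widehat M=\frac{4}{\lambda^2}\begin{bmatrix}s_2^2&-s_1s_2\\-s_1s_2&s_1^2\end{bmatrix}
=\frac{1}{-\lambda}\Pi(\theta),
\]
where \(\Pi\) is the orthogonal projector onto \((s_2,-s_1)^\perp\)-direction (the divergence-free direction), and the staggered phases cancel in the quadratic form. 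Then Parseval gives
\[
\langle\bm q,S_-\bm q\rangle=\frac{c}{\mu}\int\frac{\widehat{\bm q}^*\Pi\widehat{\bm q}}{-\lambda}\,d\theta,
\qquad
\mu\|\nabla_h\bm u\|^2=\frac{c}{\mu}\int(-\lambda)\frac{\widehat{\bm q}^*\Pi^2\widehat{\bm q}}{\lambda^2}\,d\theta .
\]
These agree because \(\Pi^2=\Pi\) and \(\sum_j|\gamma_j|^2=-\lambda\).

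\textbf{Step 3 (integrability and regularization).} Force balance gives \(\widehat{\bm q}(0)=0\). Since \(\bm q\) is finitely supported, \(\widehat{\bm q}\) is smooth, so \(|\widehat{\bm q}|=O(|\theta|)\) and the integrand is \(O(1)\) near \(\theta=0\). Hence both integrals converge absolutely, and \(\nabla_h\bm u\in\ell^2\) with \(\widehat{\nabla_h\bm u}=\gamma_j\widehat{\bm u}\) in \(L^2\).

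Finally, we must check that the finite-part and regularized kernels (the \(G_{\rm reg}\) / Duffin--Shelly gauge) give the same value. They differ from the formal kernel by discrete polynomials of degree at most two, supported at \(\theta=0\) in Fourier space. Paired against \(\bm q\), these contribute terms proportional to the moments \(\sum q\) and \(\sum n\,q\). The zeroth moments vanish by force balance. The first-moment terms must be checked to cancel by the symmetry of the quadratic form; if they do not, the statement should be read in the fixed gauge where the Fourier computation is exact. This gauge bookkeeping is the delicate point I would verify carefully.
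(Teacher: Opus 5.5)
Your proposal is correct. Its Step~1 is exactly the paper's proof. The paper pairs the momentum equation with $\bm u$ over $\mathbb Z^2$, sums by parts, and removes the pressure term using $\nabla_h\cdot\bm u=0$. It then simply asserts that the boundary term at infinity vanishes because $\bm q$ is force-balanced and $\bm u$ has finite energy.

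Where you genuinely differ is in how that limit is justified. Instead of controlling boundary sums, you verify the identity as a Parseval identity for the multiplier $(-\lambda)^{-1}\Pi$. In that form the pressure never appears; its role is taken over by $\Pi^2=\Pi$. Force balance enters only through $\widehat{\bm q}(0)=0$, which keeps the integrand bounded near $\theta=0$. This also shows exactly why the hypothesis is needed: without it one meets the logarithmically divergent $\int|\theta|^{-2}\,d\theta$.

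The paper's physical-space route is shorter and keeps the Green's-identity picture. To make it rigorous, however, you need decay of the potentials. Under force balance $\bm u=O(|m|^{-1})$ and $\nabla_h\bm u,\,p=O(|m|^{-2})$, so the sums over $\partial B_R$ are $O(R^{-2})$. These bounds rest on the far-field asymptotics of the LGFs. Your route needs no kernel asymptotics, only that $\widehat{\bm q}$ is a trigonometric polynomial vanishing at the origin. One small slip: $\Pi$ projects onto the span of $(s_2,-s_1)$, i.e.\ onto $(s_1,s_2)^\perp$, not onto $(s_2,-s_1)^\perp$.

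The gauge point you leave open does close, and you do not need to retreat to a special gauge. What closes it is the finite-energy property that the statement attaches to force-balanced densities.

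Let $K$ be your Fourier-defined kernel, and suppose the gauge-fixed kernel is $\mathcal S=K+P$ with $P$ a matrix-valued lattice polynomial. Then $\bm w:=\mathcal S\bm q-K\bm q$ is a polynomial in $m$. Componentwise force balance cancels its top-degree part, so for $\deg P\le 2$ the field $\bm w$ is affine. Now $\nabla_h\mathcal S\bm q$ is square-summable by hypothesis, and $\nabla_h K\bm q$ is square-summable by your Step~3. Hence the polynomial $\nabla_h\bm w$ is square-summable on $\mathbb Z^2$, so it is zero, and $\bm w$ is a constant vector $\bm c$.

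Consequently $\nabla_h\mathcal S\bm q=\nabla_h K\bm q$, and $\langle\bm q,\mathcal S\bm q\rangle=\langle\bm q,K\bm q\rangle+\bm c\cdot\sum\bm q=\langle\bm q,K\bm q\rangle$. The first-moment terms you were worried about are exactly the linear part of $\bm w$, and finite energy rules them out.

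In the paper's normalization the point is even more benign. Changing the additive constant of $G$ by $c$ changes $16H$ in \eqref{eq:duffin-shelly-H} by $4c(m_1^2+m_2^2)-2c$. Its second differences are constants, so the normalization of $G$ moves $\mathcal S$ only by a constant matrix.
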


\begin{proof}
Since \(\bm q\) is supported on \(\gamma_-\), and since
\(\bm u|_{\gamma_-}=S_-\bm q\), we have
\begin{align}
    \langle \bm q,S_-\bm q\rangle_{\gamma_-}
    =
    \langle \bm q,\bm u\rangle_{\mathbb Z^2}.
\end{align}
The Stokes LGF potential satisfies
\begin{align}
    -\mu\Delta_h \bm u+\nabla_h p = \bm q,
    \qquad
    \nabla_h\cdot \bm u=0
\end{align}
on the infinite staggered lattice. Therefore
\begin{align}
    \langle \bm q,\bm u\rangle_{\mathbb Z^2}
    &=
    \langle -\mu\Delta_h \bm u+\nabla_h p,\bm u\rangle_{\mathbb Z^2}  \nonumber\\
    &=
    \mu \|\nabla_h \bm u\|_{\ell^2(\mathbb Z^2)}^2
    -
    \langle p,\nabla_h\cdot \bm u\rangle_{\mathbb Z^2}.
\end{align}
The boundary term at infinity vanishes because the density is
force-balanced and the regularized velocity field has finite energy. Since
\(\nabla_h\cdot \bm u=0\), the pressure term vanishes. Hence
\begin{align}
    \langle \bm q,S_-\bm q\rangle_{\gamma_-}
    =
    \mu \|\nabla_h \bm u\|_{\ell^2(\mathbb Z^2)}^2
    \ge 0 .
\end{align}
\end{proof}




\begin{theorem}[Nullspace of \(S_-\)]
\label{thm:Sminus-nullspace-pressure-jump}
Let \(S_-\) be the exterior-layer Stokes single-layer matrix assembled from
the regularized Stokes LGF. Assume that the density space is restricted to
the force-balanced class so that the corresponding velocity potential has
finite discrete energy. Let
\[
    Z_h
    :=
    \left\{
    \left.\nabla_h \chi\right|_{\gamma_-}
    :
    \chi \text{ is piecewise constant away from } \gamma_-
    \right\}
\]
be the space of discrete hydrostatic pressure-jump modes. Then
\[
    Z_h \subseteq \ker S_- .
\]
Moreover, if the only finite-energy homogeneous Stokes velocity field on
the infinite lattice with zero trace on \(\gamma_-\) is the zero velocity
field, then
\[
    \ker S_- = Z_h .
\]
\end{theorem}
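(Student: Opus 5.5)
The proof splits into two inclusions, and I would organize it around the identity $\bm u=\mathcal S\bm q$, $p=\mathcal P\bm q$ of Theorem~\ref{thm:rep1} together with the energy identity of Lemma~\ref{lem:stokes-single-layer-energy}.

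\emph{Step 1: $Z_h\subseteq\ker S_-$.} Take $\chi$ piecewise constant away from $\gamma_-$, and set $\bm q=\nabla_h\chi|_{\gamma_-}$, i.e.\ the staggered differences of $\chi$. By construction these vanish off $\gamma_-$. I would consider the candidate pair $(\bm u,p)=(\bm 0,\chi)$. Since $\nabla_h\chi=\bm q$ on the whole lattice, this pair solves
\[
-\mu\Delta_h\bm 0+\nabla_h\chi=\bm q,\qquad \nabla_h\cdot\bm 0=0 .
\]
The potential $(\mathcal S\bm q,\mathcal P\bm q)$ solves the same system. I would then invoke uniqueness of finite-energy solutions of the lattice Stokes system modulo the gauge fixed by the regularization. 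This is best seen in Fourier variables: the symbol of the velocity response in the proof of Theorem~\ref{thm:mac-stokes-lgf} is the projector $(I-\gamma\gamma^*/|\gamma|^2)/(-\mu\lambda)$, and it annihilates $\widehat{\bm q}=\gamma\widehat\chi$. Here I must check that $\widehat\chi$ is regular enough near $\theta=0$, which holds because $\chi$ is a finite combination of indicator functions of bounded sets plus a constant, and the constant has zero gradient. It follows that $\mathcal S\bm q\equiv\bm 0$, and in particular $S_-\bm q=\bm 0$. I would also verify that these modes are force-balanced: $\sum q^u=\sum D_x\chi=0$ by telescoping when $\chi$ is constant at infinity. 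This is consistent with the standing hypothesis.

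\emph{Step 2: $\ker S_-\subseteq Z_h$ under the uniqueness hypothesis.} Let $S_-\bm q=0$ with $\bm q$ force-balanced. Then $\bm u=\mathcal S\bm q$ is a finite-energy field. It is homogeneous Stokes off $\gamma_-$ and has zero trace on $\gamma_-$. Lemma~\ref{lem:stokes-single-layer-energy} gives $\mu\|\nabla_h\bm u\|^2=\langle\bm q,S_-\bm q\rangle=0$, so $\bm u$ is constant. Finite energy and the gauge force $\bm u$ to be zero, which also follows directly from the uniqueness assumption applied to each connected region. With $\bm u\equiv0$, the momentum equation reduces to $\nabla_h p=\bm q$ on the whole lattice. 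Since $\bm q$ vanishes off $\gamma_-$, the pressure $p$ has zero differences between adjacent cells whose connecting face is not in $\gamma_-$. Hence $p$ is constant on each connected component of the complement of $\gamma_-$; that is, $p$ is piecewise constant away from $\gamma_-$. Taking $\chi=p$ gives $\bm q=\nabla_h\chi|_{\gamma_-}\in Z_h$.

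\emph{Main obstacle.} The delicate point is Step 1's uniqueness and gauge issue. Because the kernels are defined only in a finite-part or regularized sense, one must show that the regularized $\mathcal S$ applied to a gradient density yields exactly zero rather than a nonzero null-space polynomial (a constant or linear discrete harmonic field). I would handle this by writing $\mathcal S\bm q$ as a finite combination of $D\,D\,D H$ terms that telescope into $D_{yy}D_x^+H-D_x^+D_y^+D_y^-H$ type differences, which vanish identically as lattice operators independently of the gauge of $H$. This direct algebraic commutation argument avoids Fourier regularity questions altogether, and I would try it first.
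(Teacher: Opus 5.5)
Your proposal is correct and follows the paper's proof essentially step for step. The hydrostatic pair $(\bm 0,\chi)$ gives $Z_h\subseteq\ker S_-$; for the converse, the energy identity forces $\nabla_h\bm u=0$, hence $\bm u\equiv\bm 0$, $\nabla_h p=\bm q$, and $p$ is piecewise constant away from $\gamma_-$.

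The one substantive difference is in Step 1. There the paper passes directly from ``$(\bm 0,\chi)$ solves the forced system'' to $S_-\bm q=0$, without showing that this pair coincides with the regularized LGF potential. Your argument subtracts the constant value of $\chi$ at infinity and uses summation by parts to reduce $\mathcal S\nabla_h\chi$ to commutators of difference operators applied to $H$. This closes that gap independently of the gauge. Your telescoping check that $Z_h$ lies in the force-balanced class is also something the paper leaves implicit.
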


\begin{proof}
We first show that \(Z_h\subseteq \ker S_-\). Let \(\bm q\in Z_h\). Then
\(\bm q=\nabla_h \chi\) on \(\gamma_-\), where \(\chi\) is piecewise constant
away from the boundary layer. Extend \(q\) by zero away from \(\gamma_-\).
The pair
\[
    \bm u\equiv 0,\qquad p=\chi
\]
satisfies
\[
    -\mu\Delta_h \bm u+\nabla_h p = \bm q,
    \qquad
    \nabla_h\cdot \bm u=0,
\]
in the distributional lattice sense. Therefore the velocity trace generated
by this density is zero on \(\gamma_-\), and hence
\[
    S_-\bm q=0.
\]
Thus \(Z_h\subseteq \ker S_-\).

Conversely, suppose \(S_-\bm q=0\), and let \((\bm u,p)\) be the Stokes LGF
potential generated by \(\bm q\). Since \(\bm q\) is supported on \(\gamma_-\),
\[
    \bm q^T S_- \bm q
    =
    \sum_{m\in\gamma_-} \bm q(m)\cdot \bm u(m).
\]
Using the discrete Stokes equations and summation by parts gives
\[
    \bm q^T S_- \bm q
    =
    \mu \|\nabla_h \bm u\|_{\ell_h^2}^2
    -
    \langle p,\nabla_h\cdot \bm u\rangle_h .
\]
The pressure term vanishes because \(\nabla_h\cdot \bm u=0\). Since
\(S_-\bm q=0\), the left-hand side is zero, and therefore
\[
    \nabla_h \bm u=0.
\]
The finite-energy condition, or equivalently the chosen velocity gauge,
eliminates nonzero constant velocities. Hence \(\bm u\equiv0\).

Substituting \(\bm u=0\) into the Stokes equations gives
\[
    \nabla_h p=\bm q
    \quad\text{on } \gamma_-,
    \qquad
    \nabla_h p=0
    \quad\text{away from } \gamma_-.
\]
Thus \(p\) is piecewise constant away from the boundary layer, and \(\bm q\) is
the discrete gradient of a piecewise constant pressure field. Therefore
\(\bm q\in Z_h\), which proves
\[
    \ker S_- \subseteq Z_h.
\]
Combining the two inclusions gives
\[
    \ker S_- = Z_h .
\]
\end{proof}

\begin{corollary}[Injectivity modulo hydrostatic modes]
\label{cor:Sminus-injective-modulo}
Assume the hypotheses of Theorem~\ref{thm:Sminus-nullspace-pressure-jump}.
Let
\[
    Q_h^0
    :=
    \left\{
    \bm q:\;
    \sum_{\gamma_-}\bm q=0,
    \quad
    \langle \bm q,z\rangle_{\gamma_-}=0
    \ \text{for all } z\in Z_h
    \right\}.
\]
Then
\[
    S_-\bm q=0,\qquad \bm q\in Q_h^0
    \quad\Longrightarrow\quad
    \bm q=0.
\]
Equivalently, \(S_-\) is injective on the constrained space \(Q_h^0\), or,
in quotient notation, \(S_-\) is injective on \(Q_h/Z_h\).
\end{corollary}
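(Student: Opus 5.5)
The plan is to combine Theorem~\ref{thm:Sminus-nullspace-pressure-jump} with an orthogonality argument. Let $\bm q\in Q_h^0$ with $S_-\bm q=0$. Since $Q_h^0$ is contained in the force-balanced class (its first constraint $\sum_{\gamma_-}\bm q=0$ is understood componentwise, i.e.\ $\sum_{\gamma_-^u}q^u=0$ and $\sum_{\gamma_-^v}q^v=0$), the hypotheses of Theorem~\ref{thm:Sminus-nullspace-pressure-jump} apply. We conclude $\bm q\in\ker S_-=Z_h$. The key identity is the characterization $\ker S_-=Z_h$. That characterization rests on the energy identity of Lemma~\ref{lem:stokes-single-layer-energy}, which forces $\nabla_h\bm u=0$, and on the uniqueness hypothesis, which then gives $\bm u\equiv0$. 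We simply invoke it.

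Next we use the second constraint defining $Q_h^0$. Because $\bm q\in Z_h$, we may take $z=\bm q$ in $\langle\bm q,z\rangle_{\gamma_-}=0$. This gives $\|\bm q\|_{\ell^2(\gamma_-)}^2=\langle\bm q,\bm q\rangle_{\gamma_-}=0$. The inner product on $\gamma_-$ is the standard Euclidean one on the finite set of layer nodes, so it is positive definite, and hence $\bm q=0$. One subtlety is that $Z_h$ is a subspace of densities on $\gamma_-$. The restriction $\nabla_h\chi|_{\gamma_-}$ is linear in $\chi$, so $Z_h$ is a linear space and orthogonality to all of $Z_h$ includes orthogonality to $\bm q$ itself.

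For the quotient formulation, suppose $S_-\bm q=0$ with $\bm q\in Q_h$, the force-balanced class. Then $\bm q\in Z_h$, so its class in $Q_h/Z_h$ is zero. It remains to check that $S_-$ is well defined on the quotient, i.e.\ $S_-$ vanishes on $Z_h$. This is the first inclusion of Theorem~\ref{thm:Sminus-nullspace-pressure-jump}, provided $Z_h\subseteq Q_h$. I expect this to be the main point needing care: a pressure-jump density $\nabla_h\chi|_{\gamma_-}$ must be force-balanced. It holds because a telescoping sum of differences of a function that is constant away from the layer vanishes: $\sum D^-_x\chi=0$ over the finite support. If that verification were in doubt, I would instead state the quotient as $Q_h/(Z_h\cap Q_h)$ and the argument goes through unchanged. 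Finally, $Q_h^0$ is exactly the orthogonal complement of $Z_h$ within $Q_h$, so the two formulations are equivalent via the orthogonal decomposition $Q_h=Q_h^0\oplus Z_h$.
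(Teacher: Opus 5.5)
Your proof is correct and follows the paper's argument: you invoke Theorem~\ref{thm:Sminus-nullspace-pressure-jump} to get $\bm q\in\ker S_-=Z_h$, then take $z=\bm q$ in the orthogonality constraint to obtain $\|\bm q\|_{\gamma_-}^2=0$. Your further treatment of the quotient formulation, namely checking $Z_h\subseteq Q_h$ by telescoping and using $Q_h=Q_h^0\oplus Z_h$, goes beyond the paper's proof, which states that equivalence without verifying it.
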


\begin{proof}
Let \(\bm q\in Q_h^0\) and suppose \(S_-\bm q=0\). By
Theorem~\ref{thm:Sminus-nullspace-pressure-jump},
\[
    \bm q\in \ker S_- = Z_h.
\]
But \(\bm q\in Q_h^0\) also implies
\[
    \langle \bm q,z\rangle_{\gamma_-}=0
    \qquad
    \forall z\in Z_h.
\]
Taking \(z=\bm q\) gives
\[
    \|\bm q\|_{\gamma_-}^2=0.
\]
Therefore \(\bm q=0\).
\end{proof}

\section{Dirichlet boundary closure}
\label{sec:boundary-closure}

No boundary condition has entered the construction so far.  The LGF
representation gives a discrete Stokes field satisfying the homogeneous
difference equations away from the source layer.  The purpose of the
boundary closure is to determine the unknown layer density
    $\bm q=
        \begin{bmatrix}
        q^u\\
        q^v
        \end{bmatrix}$
from the prescribed boundary data.

We consider the Dirichlet condition
\begin{align}
    u|_\Gamma = g^u,
    \qquad
    v|_\Gamma = g^v,
\end{align}
where $\Gamma$ is the continuous boundary.
Following the local boundary interpolation approach of
\citep{xia2023local,banks2016galerkin}, we impose the boundary condition by
evaluating local tensor-product hat functions at cut points.  For a grid
point \((x_j,y_k)\) of the appropriate staggered lattice, define
\begin{align}
    \phi_{jk}(x,y)
    =
    \phi\left(\frac{x-x_j}{h}\right)
    \phi\left(\frac{y-y_k}{h}\right),
\end{align}
where
\begin{align}
\phi(\xi)
=
\begin{cases}
1+\xi, & -1\leq \xi\leq 0,\\
1-\xi, & 0\leq \xi\leq 1,\\
0, & \text{otherwise}.
\end{cases}
\end{align}
The point \((x_j,y_k)\) is taken from either the \(x\)-velocity staggered
grid or the \(y\)-velocity staggered grid, depending on the velocity
component being interpolated.

\begin{figure}[htbp]
    \centering
    \begin{subfigure}{0.35\textwidth}
        \centering
        \includegraphics[width=\textwidth]{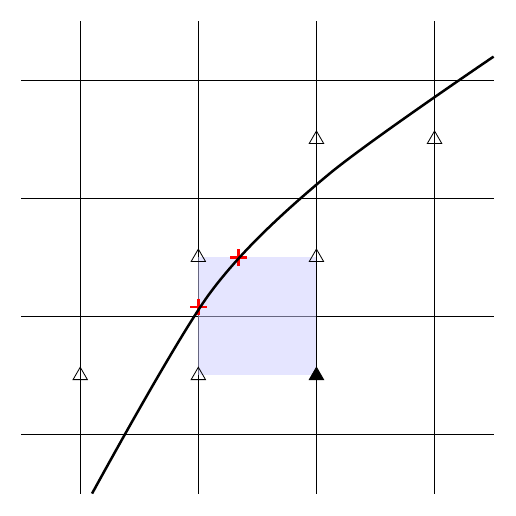}
        \caption{\(u\)-velocity half grids}
        \label{fig:cut1}
    \end{subfigure}
    ~
    \begin{subfigure}{0.35\textwidth}
        \centering
        \includegraphics[width=\textwidth]{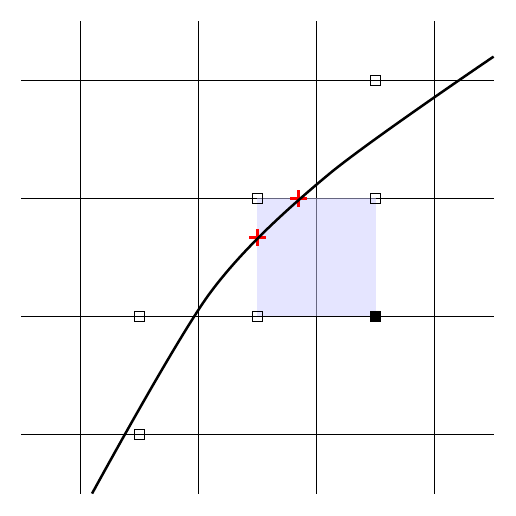}
        \caption{\(v\)-velocity half grids}
        \label{fig:cut2}
    \end{subfigure}
    \caption{Cut-cell interpolation for staggered velocity components. (Contribution of solid triangles/squares is zero at the intersections.)}
    \label{fig:cuts}
\end{figure}

For each exterior boundary-layer point in \(\gamma_-^u\) and
\(\gamma_-^v\), we select the nearest intersection point between the
corresponding staggered grid line and the physical boundary \(\Gamma\) (see Figure~\ref{fig:cuts}).  The
intersection points are denoted by
\begin{align}
    \bm x^u=\{x_a^u\}_{a=1}^{|\gamma_-^u|},
    \qquad
    \bm x^v=\{x_b^v\}_{b=1}^{|\gamma_-^v|}.
\end{align}
Thus
\begin{align}
    |\bm x^u|=|\gamma_-^u|,
    \qquad
    |\bm x^v|=|\gamma_-^v|.
\end{align}

For the \(u\)-component, we use the local representation
\begin{align}
    u(x,y)
    =
    \sum_{(m_1+1/2,m_2)\in\gamma^u}
    u_{m_1+1/2,m_2}\,
    \phi^u_{m_1+1/2,m_2}(x,y),
    \qquad
    (x,y)\in C_{\rm cut},
    \label{eq:u-local-interpolation}
\end{align}
where
\[
    \gamma^u=\gamma_+^u\cup\gamma_-^u .
\]
For the basic closure, the basis functions associated with
\(\gamma^u\) are sufficient at the selected intersection points.  Therefore
the \(u\)-component boundary condition is discretized as
\begin{align}
    \sum_{(m_1+1/2,m_2)\in\gamma^u}
    u_{m_1+1/2,m_2}\,
    \phi^u_{m_1+1/2,m_2}(x_a^u)
    =
    g^u(x_a^u),
    \qquad
    a=1,\ldots,|\gamma_-^u|.
\end{align}
In matrix form,
\begin{align}
    \Phi_+^u u_+ + \Phi_-^u u_- = g^u .
    \label{eq:u-bc-basic}
\end{align}
Similarly, for the \(v\)-component,
\begin{align}
    \sum_{(m_1,m_2+1/2)\in\gamma^v}
    v_{m_1,m_2+1/2}\,
    \phi^v_{m_1,m_2+1/2}(x_b^v)
    =
    g^v(x_b^v),
    \qquad
    b=1,\ldots,|\gamma_-^v|,
\end{align}
or
\begin{align}
    \Phi_+^v v_+ + \Phi_-^v v_- = g^v .
    \label{eq:v-bc-basic}
\end{align}
Combining \eqref{eq:u-bc-basic} and \eqref{eq:v-bc-basic} gives
\begin{align}
    \Phi_+\bm u_+ + \Phi_-\bm u_- = \bm g,
    \label{eq:combined-bc-basic}
\end{align}
where
\begin{align}
    \Phi_\pm
    =
    \begin{bmatrix}
    \Phi_\pm^u & 0\\
    0 & \Phi_\pm^v
    \end{bmatrix},
    \qquad
    \bm g=
    \begin{bmatrix}
    g^u\\
    g^v
    \end{bmatrix}.
\end{align}

The layer potential representation gives
\begin{align}
    \bm u_+ = S_+ \bm q,
    \qquad
    \bm u_- = S_- \bm q,
    \label{eq:upm-density}
\end{align}
with
\begin{align}
    S_\pm
    =
    \begin{bmatrix}
    S_{11}^\pm & S_{12}^\pm\\
    S_{21}^\pm & S_{22}^\pm
    \end{bmatrix}.
\end{align}
Substitution of \eqref{eq:upm-density} into
\eqref{eq:combined-bc-basic} gives the boundary-density equation
\begin{align}
    A \bm q = \bm g,
    \qquad
    A:=\Phi_+S_+ + \Phi_-S_- .
    \label{eq:density-bc-basic}
\end{align}

\paragraph{Rank-one completion for a single object}
For a single connected object, the hydrostatic nullspace is one-dimensional.
It is represented, up to discretization and scaling, by the normal density.
Let \(\nu=(\nu_1,\nu_2)\) be the outward unit normal to the continuous boundary \(\Gamma\) (pointing to fluids).  Define
the sampled normal vector on the intersection points by
\begin{align}
    n_\Gamma
    =
    \begin{bmatrix}
    \nu_1(x_1^u),
    \cdots,
    \nu_1(x_{|\gamma_-^u|}^u),
    \nu_2(x_1^v),
    \cdots,
    \nu_2(x_{|\gamma_-^v|}^v)
    \end{bmatrix}^T.
    \label{eq:sampled-normal-vector}
\end{align}

The rank-one completed boundary-density system is
\begin{align}
    A_c \bm q = \bm g,
    \qquad
    A_c := A + \tau n_\Gamma n_\Gamma^T,
    \qquad
    \tau\neq 0 .
    \label{eq:rank-one-completed-A}
\end{align}

\begin{remark}
The parameter $\tau$ is designed to lift the nullspace of the rank-deficient matrix $A$ by introducing new singular values that naturally match the scale of the existing non-zero ones. Rather than computing the smallest non-null singular value by an SVD, we take
\[
\tau=\|A\|_F/\sqrt{n},
\]
where $n$ is the number of columns of $A$. This Frobenius-norm proxy approximates the mean singular-value magnitude and keeps $A_c$ well conditioned without distorting the bulk of its spectrum.
\end{remark}

The rank-one completion in \eqref{eq:rank-one-completed-A} removes the
hydrostatic pressure-jump nullspace.  Indeed, if the velocity vanishes and
the pressure is piecewise constant,
\[
    \bm u\equiv 0,
    \qquad
    p =
    \begin{cases}
    p_{\rm in}, & \text{inside the object},\\
    p_{\rm out}, & \text{outside the object},
    \end{cases}
\]
then the continuous Stokes equation reduces to $\nabla p=\bm q$. In the continuous traction interpretation, this gives a boundary density proportional to the normal, $\bm q\propto -[p]\nu$.
Thus the hydrostatic null mode is a normal mode.  In the exact staggered
discretization, the corresponding algebraic null vector is
\[
    z_h
    =
    \left.
    \nabla_h\chi
    \right|_{\gamma_-},
\]
where \(\chi\) is a cell-centered indicator that is constant away from the
discrete boundary layer.  The sampled vector \(n_\Gamma\) in
\eqref{eq:sampled-normal-vector} is the cut-point representation of the
same mode and is natural when the Dirichlet data are imposed at boundary
intersection points.

\begin{remark}
\label{rem:normal-completion}
For \(K\) disconnected objects, there are \(K\) independent hydrostatic
pressure-jump modes.  Let \(n_\Gamma^{(k)}\) denote the sampled normal vector
supported on the intersection points belonging to object \(k\), and define
\begin{align}
    N_\Gamma
    =
    \begin{bmatrix}
    n_\Gamma^{(1)} & n_\Gamma^{(2)} & \cdots & n_\Gamma^{(K)}
    \end{bmatrix}.
\end{align}
The rank-\(K\) completed matrix is
\begin{align}
    A_c^{(K)}
    =
    A
    +
    \tau
    N_\Gamma N_\Gamma^T,
    \qquad
    \tau\neq 0.
    \label{eq:rank-K-completion}
\end{align}
\end{remark}

Once \(\bm q\) has been determined from \eqref{eq:rank-one-completed-A}, the
velocity and pressure are recovered by the LGF convolutions
\begin{align}
    \bm u = \mathcal S \bm q,
    \qquad
    p = \mathcal P \bm q .
\end{align}

\paragraph{Closure with additional exterior points}
In some cut configurations, the local interpolation stencil contains
additional exterior velocity points beyond \(\gamma_-^u\) and
\(\gamma_-^v\).  Let these additional unknowns be denoted by
\(\eta_u\) and \(\eta_v\).  The boundary interpolation equations become
\begin{subequations}
\label{eq:bc-extra-points}
\begin{align}
    \Phi_+^u u_+ + \Phi_-^u u_- + \Phi_\eta^u \eta_u &= g^u,\\
    \Phi_+^v v_+ + \Phi_-^v v_- + \Phi_\eta^v \eta_v &= g^v.
\end{align}
\end{subequations}
The extra values are eliminated by local extrapolation:
\begin{subequations}
\label{eq:eta-extrapolation}
\begin{align}
    R_1^u u_+ + R_2^u u_- + \eta_u &= 0,
    \qquad
    \eta_u = -R_1^u u_+ - R_2^u u_-,\\
    R_1^v v_+ + R_2^v v_- + \eta_v &= 0,
    \qquad
    \eta_v = -R_1^v v_+ - R_2^v v_- .
\end{align}
\end{subequations}
Substituting \eqref{eq:eta-extrapolation} into
\eqref{eq:bc-extra-points} gives
\begin{subequations}
\label{eq:bc-extra-eliminated}
\begin{align}
    \left(\Phi_+^u-\Phi_\eta^uR_1^u\right)u_+
    +
    \left(\Phi_-^u-\Phi_\eta^uR_2^u\right)u_-
    &=g^u,\\
    \left(\Phi_+^v-\Phi_\eta^vR_1^v\right)v_+
    +
    \left(\Phi_-^v-\Phi_\eta^vR_2^v\right)v_-
    &=g^v.
\end{align}
\end{subequations}
Define the effective interpolation matrices
\begin{align}
    \widetilde\Phi_+
    :=
    \begin{bmatrix}
    \Phi_+^u-\Phi_\eta^uR_1^u & 0\\
    0 & \Phi_+^v-\Phi_\eta^vR_1^v
    \end{bmatrix},
    \qquad
    \widetilde\Phi_-
    :=
    \begin{bmatrix}
    \Phi_-^u-\Phi_\eta^uR_2^u & 0\\
    0 & \Phi_-^v-\Phi_\eta^vR_2^v
    \end{bmatrix}.
\end{align}
Then the boundary condition can be written compactly as
\begin{align}
    \widetilde\Phi_+\bm u_+
    +
    \widetilde\Phi_-\bm u_-
    =
    \bm g.
    \label{eq:bc-extra-compact}
\end{align}
Using \(\bm u_\pm=S_\pm \bm q\), we obtain
\begin{align}
    \widetilde A \bm q=\bm g,
    \qquad
    \widetilde A
    :=
    \widetilde\Phi_+S_+
    +
    \widetilde\Phi_-S_- .
    \label{eq:density-bc-extra}
\end{align}
The corresponding rank-one completed system is
\begin{align}
    \widetilde A_c \bm q=\bm g,
    \qquad
    \widetilde A_c
    :=
    \widetilde A + \tau n_\Gamma n_\Gamma^T,
    \qquad
    \tau\neq 0 .
    \label{eq:rank-one-completed-A-extra}
\end{align}

Equivalently, \(\widetilde A\) has the block form
\begin{align}
\widetilde A
=
\begin{bmatrix}
(\Phi_+^u-\Phi_\eta^uR_1^u)S_{11}^{+}
+
(\Phi_-^u-\Phi_\eta^uR_2^u)S_{11}^{-}
&
(\Phi_+^u-\Phi_\eta^uR_1^u)S_{12}^{+}
+
(\Phi_-^u-\Phi_\eta^uR_2^u)S_{12}^{-}
\\[4pt]
(\Phi_+^v-\Phi_\eta^vR_1^v)S_{21}^{+}
+
(\Phi_-^v-\Phi_\eta^vR_2^v)S_{21}^{-}
&
(\Phi_+^v-\Phi_\eta^vR_1^v)S_{22}^{+}
+
(\Phi_-^v-\Phi_\eta^vR_2^v)S_{22}^{-}
\end{bmatrix}.
\label{eq:block-density-bc-extra}
\end{align}
Although the matrices in \eqref{eq:density-bc-basic} and
\eqref{eq:density-bc-extra} are dense, their matrix-vector products can be
evaluated by LGF convolutions.  Hence the completed systems
\eqref{eq:rank-one-completed-A} and
\eqref{eq:rank-one-completed-A-extra} are suitable for matrix-free GMRES.

\section{Calder\'on preconditioning with the Laplace LGF}
\label{sec:laplace-calderon-preconditioning}

The boundary-density equation obtained above has the form
\begin{align}
    A_c \bm q = \bm g,
    \label{eq:completed-density-system}
\end{align}
where \(A_c\) denotes the rank-completed version of either
\[
    A=\Phi_+S_+ + \Phi_-S_-
\]
or, when additional exterior points are used,
\[
    \widetilde A=\widetilde\Phi_+S_+ + \widetilde\Phi_-S_- .
\]
Although this system is square after the hydrostatic completion, it is dense
and may become ill-conditioned as the boundary is refined.  We therefore use
a preconditioner built from the scalar Laplace lattice Green's function on
the same staggered boundary sets.

Let \(G\) denote the regularized scalar Laplace LGF.  For each velocity
component \(\alpha\in\{u,v\}\), define the scalar Laplace layer matrices
\(V_\alpha\) by
\begin{align}
    (V_\alpha \rho)(m)
    =
    \sum_{n\in\gamma_-^\alpha}
    G(m-n)\rho(n),
    \qquad
    m\in\gamma_-^\alpha.
\end{align}
Thus \(V_\alpha\) maps scalar densities on the exterior layer
\(\gamma_-^\alpha\) to values on the same exterior layer. 
We then define the componentwise Laplace preconditioner
\begin{align}
    V
    :=
    \begin{bmatrix}
    V_u & 0\\
    0 & V_v
    \end{bmatrix}.
    \label{eq:laplace-preconditioner-basic}
\end{align}

The preconditioned system is then solved in left-preconditioned form:
\begin{align}
    V^{-1} A_c  \bm{q} = V^{-1}\bm g.
    \label{eq:preconditioned-system}
\end{align}

\paragraph{Operator-order interpretation}
The effectiveness of the Laplace preconditioner follows from the shared principal singularity of the Laplace and Stokes kernels. In Fourier variables,
\[
\widehat G(\theta)=\frac{h^2}{\lambda(\theta)},
\qquad
\widehat{\mathcal S}(\theta)=-\frac{h^2}{\mu\,\lambda(\theta)}\,\Pi_h(\theta),
\]
where $\Pi_h(\theta)$ is the discrete Leray--Helmholtz projector onto the divergence-free subspace on the staggered MAC grid. Up to the viscosity factor and this bounded matrix-valued projection, the Stokes velocity kernel has the same singular scalar factor as the Laplace kernel. After restriction to a boundary layer, both single-layer operators are of order $-1$, so $V$ captures the leading singular order of $A$ and $V^{-1}A_c$ is expected to behave like an order-zero operator on the constrained density space.

\paragraph{Preconditioning via discrete Calder\'on identities}

The condition numbers of $V_u$ and $V_v$ grow in proportion to the
number of boundary nodes, or $\mathcal O(h^{-1})$ for a resolved smooth
boundary \citep{martinsson2009boundary,xia2026geom}. Although the formal
preconditioner $V^{-1}$ corrects this operator order, forming or inverting the
dense matrix $V$ is expensive. We therefore replace $V^{-1}$ by a
componentwise construction based on discrete Calder\'on identities.

For each velocity component $\alpha\in\{u,v\}$, we first define an unscaled double-difference matrix $W_\alpha$ acting across the discrete boundary interface $\gamma_-^\alpha$. Let $t^-$ and $s^-$ denote a given target node and source node in $\gamma_-^\alpha$, with respective further-exterior neighbor nodes $t^+$ and $s^+$ (one lattice step farther from $\Omega$ across the intersected bar). The matrix elements are constructed via the lattice Green's function as
\begin{equation}
    (W_\alpha)_{t^-s^-} = - \bigl[G(s^+-t^+) - G(s^--t^+) - G(s^+-t^-) + G(s^--t^-)\bigr],\quad s^-,t^-\in \gamma^\alpha_-,
\end{equation}
adapted from \citep{bhamidipati2021forward}.

In an unfitted lattice framework, the mapping between the exterior boundary nodes and the intersected lattice bars is generally not one-to-one; therefore, we must introduce an incidence scaling matrix $J_\alpha$ to balance the topological dimensions. Assuming uniform unit conductance, $J_\alpha$ reduces to a diagonal matrix where each entry $(J_\alpha)_{ii}$ corresponds exactly to the number of exterior lattice connections attached to the boundary node $i \in \gamma_-^\alpha$. The fully formed discrete hyper-singular operator for component $\alpha$ is then given by $H_\alpha = J_\alpha + W_\alpha$.

On each component block, this construction satisfies the discrete
Calder\'on identity \citep{bhamidipati2021forward}
\begin{equation}
    V_\alpha (J_\alpha+W_\alpha) = -K_\alpha (I_\alpha + K_\alpha),
\end{equation}
where $K_\alpha$ is the discrete double-layer operator on
$\gamma_-^\alpha$ and $I_\alpha$ is the identity. In this lattice
normalization, the jump contribution is contained in $K_\alpha$ rather than
written as a separate one-half identity term. For smooth, well-resolved
boundaries, the relevant eigenvalues of $K_\alpha$ cluster near $-1/2$;
hence the polynomial $-K_\alpha(I_\alpha+K_\alpha)$ clusters near $1/4$.

While the product $V_\alpha H_\alpha$ provides the desired spectral clustering, the matrix $H_\alpha$ inherently possesses a one-dimensional null space spanned by the constant vector $\bm{1}$, faithfully mirroring the property of the continuum hyper-singular operator which maps a constant field to zero. If left unregularized, applying the unscaled discrete operator $V_\alpha$ to this constant mode produces an isolated eigenvalue that scales proportionally with the boundary dimension $m_\alpha = |\gamma_-^\alpha|$. This disparate scaling causes the overall condition number of the preconditioned system to double with each mesh refinement.

To control the mesh-dependent growth of the preconditioned spectrum, we deflate the null space for each component via a rank-1 update. Let $\bm{e}_\alpha = \frac{1}{\sqrt{m_\alpha}} \bm{1}$ denote the normalized constant vector. We define the regularized block preconditioner as
\begin{equation}
    H_{\text{reg},\alpha} = H_\alpha + c_\alpha \bm{e}_\alpha \bm{e}_\alpha^T,
\end{equation}
where we introduce $c_\alpha$ as the shift parameter. To minimize the overall spectral spread, $c_\alpha$ is chosen such that the isolated constant mode is mapped directly into the center of the existing Calder\'on eigenvalue cluster at $0.25$. By measuring the spectral scaling of the constant mode under the single-layer operator as $\lambda_{\alpha} = \bm{e}_\alpha^T V_\alpha \bm{e}_\alpha$, the mathematically optimal scaling parameter is determined to be
\begin{equation}
    c_\alpha = \frac{0.25}{\lambda_\alpha}.
\end{equation}

Finally, the global componentwise preconditioner is assembled as a block-diagonal matrix,
\begin{equation}
    H_{\text{reg}} := 
    \begin{bmatrix}
    H_{\text{reg},u} & 0\\
    0 & H_{\text{reg},v}
    \end{bmatrix},
\end{equation}
which replaces $V^{-1}$ in \eqref{eq:preconditioned-system} to form the robust left-preconditioned system 
\begin{align}
H_{\text{reg}} A_c \bm{q} = H_{\text{reg}}\bm{g},
\end{align}
with no inverse of dense matrices.

\begin{remark}[Generalization to multiple disjoint objects]


When we have $M$ objects, let $\bm{e}^{(k)}_\alpha$ denote the normalized indicator vector for the $k$-th object on the $\alpha$-component grid, defined such that its $i$-th entry is $1/\sqrt{m_{\alpha,k}}$ if boundary node $i$ belongs to object $k$, and $0$ otherwise. Because the object boundaries are spatially disjoint, these indicator vectors are strictly orthogonal. The fully regularized block preconditioner is constructed by independently deflating each topological component:
\begin{equation}
    H_{\text{reg},\alpha} = H_\alpha + \sum_{k=1}^M c_{\alpha,k} \bm{e}^{(k)}_\alpha (\bm{e}^{(k)}_\alpha)^T.
\end{equation}
To preserve the tight Calder\'on spectral clustering, the optimal shift parameter is calculated independently for each object's constant mode via
\begin{equation}
    c_{\alpha,k} = \frac{0.25}{(\bm{e}^{(k)}_\alpha)^T V_\alpha \bm{e}^{(k)}_\alpha}.
\end{equation}
This rank-$M$ completion maps each resolved componentwise constant
mode to the target value $0.25$. It therefore preserves the intended spectral
clustering when the lattice resolves the gaps between distinct objects.
\end{remark}

\section{Inhomogeneous body forcing}
\label{sec:inhomogeneous_forcing}

While the preceding construction addressed the homogeneous Stokes equations, extending this framework to include nonzero body forces is straightforward and leaves the boundary algebraic operator unchanged. Consider the inhomogeneous boundary value problem:
\begin{equation}
-\mu \Delta \bm u + \nabla p = \bm f,
\qquad
\nabla\cdot \bm u = 0
\quad \text{in } \Omega,
\qquad
\bm u = \bm g
\quad \text{on } \Gamma .
\label{eq:stokes_continuous}
\end{equation}
We decompose the numerical solution into a particular volume potential and a homogeneous boundary correction:
\begin{equation}
\bm u_h = \bm u_h^{f} + \mathcal S_h \bm q,
\qquad
p_h = p_h^{f}+ \mathcal P_h \bm q .
\label{eq:stokes_decomposition}
\end{equation}
Here $(\bm u_h^{f},p_h^{f})$ is a particular volume potential,
whereas $(\mathcal S_h\bm q,\mathcal P_h\bm q)$ is a homogeneous boundary
correction with unknown layer density $\bm q$.

Let $\bm f_h=(f_{1,h},f_{2,h})$ denote the body force sampled on
the two staggered velocity grids. The stored LGFs invert the
discrete Stokes operator
\begin{equation}
-\mu \Delta_h \bm u_h + \nabla_h p_h = \bm F_h,
\qquad
\nabla_h\cdot \bm u_h = 0,
\label{eq:stokes_discrete}
\end{equation}
where $\bm F_h=(F_{1,h},F_{2,h})$ denotes the discrete right-hand side associated with the chosen MAC scaling of the momentum equations (cf.~\eqref{eq:mac-u}--\eqref{eq:mac-v}).

The particular solution is computed via discrete convolution:
\begin{equation}
\begin{aligned}
u_h^{f} &= S_{11,h} * F_{1,h} + S_{12,h} * F_{2,h}, \\
v_h^{f} &= S_{21,h} * F_{1,h} + S_{22,h} * F_{2,h}, \\
p_h^{f} &= P_{1,h} * F_{1,h} + P_{2,h} * F_{2,h},
\end{aligned}
\label{eq:discrete_convolution}
\end{equation}
where $S_{ij,h}$ are the discrete Stokes velocity kernels and $P_{i,h}$ are the corresponding pressure kernels.

Enforcing the prescribed Dirichlet data requires subtracting the
trace of the particular solution from the boundary right-hand side:
\begin{equation}
A_c \bm q = \bm g_h - \Phi_+ \bm u_+^{f}-\Phi_- \bm u_-^{f},
\label{eq:bae_inhomogeneous}
\end{equation}
where $u_\pm^{f}$ denotes the trace of the particular solution at their corresponding boundary nodes.
Thus $A_c$ is unchanged from the homogeneous problem; only its
right-hand side changes. Formally,
\begin{equation}
\bm q = A_c^{-1}\bigl(\bm g_h - \Phi_+ \bm u_+^{f}-\Phi_- \bm u_-^{f}\bigr).
\label{eq:solve_q}
\end{equation}
In practice, the action of $A_c^{-1}$ in \eqref{eq:solve_q} is
computed by preconditioned GMRES rather than by forming an inverse. The final
velocity and pressure follow from \eqref{eq:stokes_decomposition}; as usual,
the pressure is determined only up to an additive constant.

This decomposition also provides a natural pathway for solving time-dependent incompressible flows. In a semi-implicit Navier--Stokes discretization, the nonlinear and temporal terms can be treated as known volume sources at each time step, allowing the boundary algebraic equation to enforce the no-slip condition entirely through the homogeneous correction.

\section{FFT acceleration of LGF convolutions}
\label{sec:fft_acceleration}

The discrete Stokes layer potentials and volume potentials are translation-invariant convolutions evaluated on the underlying Cartesian lattice. Direct evaluation of these potentials is computationally expensive; for instance, a direct volume convolution on an $N_1\times N_2$ grid requires $O(N_1^2N_2^2)$ operations. Boundary-to-volume reconstruction is similarly costly if evaluated by dense summation at all target grid points. 

These computational costs are significantly reduced by embedding the linear convolution into a circulant convolution and applying the Fast Fourier Transform (FFT). For a generic kernel $K_h$ and source $F_h$, this is expressed as:
\begin{equation}
K_h * F_h = \operatorname{crop}
\left[
\mathcal F^{-1}
\left(
\mathcal F(K_h^{\rm pad}) \cdot \mathcal F(F_h^{\rm pad})
\right)
\right],
\label{eq:fft_general}
\end{equation}
where $\mathcal F$ denotes the two-dimensional FFT, pointwise multiplication is applied in frequency space, and the crop operation extracts the physical target region. This approach reduces the operational cost of a volume convolution to:
\begin{equation}
O(M_1M_2\log(M_1M_2)),
\label{eq:fft_complexity}
\end{equation}
where $M_1\times M_2$ are the dimensions of the padded arrays (typically $M_i\approx 2N_i$ for an $N_1\times N_2$ target grid).

The same procedure applies componentwise to the Stokes velocity and pressure kernels. For a staggered-grid force $\bm F_h=(F_{1,h},F_{2,h})$, the volume potential is efficiently evaluated by:
\begin{equation}
\begin{aligned}
u_h^{f} &= \operatorname{crop} \left[ \mathcal F^{-1} \left( \widehat S_{11,h}\widehat F_{1,h} + \widehat S_{12,h}\widehat F_{2,h} \right) \right], \\
v_h^{f} &= \operatorname{crop} \left[ \mathcal F^{-1} \left( \widehat S_{21,h}\widehat F_{1,h} + \widehat S_{22,h}\widehat F_{2,h} \right) \right], \\
p_h^{f} &= \operatorname{crop} \left[ \mathcal F^{-1} \left( \widehat P_{1,h}\widehat F_{1,h} + \widehat P_{2,h}\widehat F_{2,h} \right) \right],
\end{aligned}
\label{eq:fft_volume_potential}
\end{equation}
where hats denote the FFTs of the padded arrays.

Boundary layer reconstruction is handled using an identical framework. The boundary density is first distributed onto the corresponding staggered grid locations to form grid-supported source arrays, $Q_{1,h}$ and $Q_{2,h}$. The homogeneous velocity and pressure corrections are then computed via:
\begin{equation}
\begin{aligned}
\mathcal S_{1,h}\bm q &= \operatorname{crop} \left[ \mathcal F^{-1} \left( \widehat S_{11,h}\widehat Q_{1,h} + \widehat S_{12,h}\widehat Q_{2,h} \right) \right], \\
\mathcal S_{2,h}\bm q &= \operatorname{crop} \left[ \mathcal F^{-1} \left( \widehat S_{21,h}\widehat Q_{1,h} + \widehat S_{22,h}\widehat Q_{2,h} \right) \right], \\
\mathcal P_h\bm q &= \operatorname{crop} \left[ \mathcal F^{-1} \left( \widehat P_{1,h}\widehat Q_{1,h} + \widehat P_{2,h}\widehat Q_{2,h} \right) \right].
\end{aligned}
\label{eq:fft_boundary_reconstruction}
\end{equation}


For repeated solves on a fixed grid, the FFTs of the padded kernels
\begin{equation}
\widehat S_{11,h},\quad
\widehat S_{12,h},\quad
\widehat S_{21,h},\quad
\widehat S_{22,h},\quad
\widehat P_{1,h},\quad
\widehat P_{2,h},
\label{eq:precomputed_kernels}
\end{equation}
are precomputed once and cached. Only the FFTs of the transient source arrays or boundary density arrays need to be computed during each solve. This precomputation is highly advantageous for moving-boundary or time-dependent problems, as the background grid and lattice Green's functions remain static while only the geometry-dependent interpolation and boundary algebraic equation require updates.

\section{Algorithm}
\label{sec:end-to-end-algorithm}

Algorithm~\ref{alg:bae-stokes} collects the preceding construction
into a single computational procedure. The kernel tables depend only on the
background lattice and can be reused, whereas the boundary layers,
interpolation matrices, rank completions, and preconditioner depend on the
current geometry.

\begin{algorithm}[H]
\algFontSize
\caption{Unfitted BAE solver for the steady Stokes equations}
\label{alg:bae-stokes}
\begin{algorithmic}[1]
\Require Physical boundary $\Gamma$, MAC-grid spacing $h$, viscosity $\mu$,
body force $\bm f$, Dirichlet data $\bm g$, and GMRES tolerance $\varepsilon$
\Ensure Staggered velocity $\bm u_h$ and cell-centered pressure $p_h$
\State Classify the staggered velocity nodes into $M_\pm^u$ and $M_\pm^v$;
construct the boundary layers $\gamma_\pm^u$ and $\gamma_\pm^v$.
\State Locate the grid-line intersections with $\Gamma$ and assemble
$\Phi_\pm$; if extra exterior nodes enter a cut stencil, eliminate them by
local extrapolation to obtain $\widetilde\Phi_\pm$.
\State Tabulate the regularized Laplace LGF $G$ and the biharmonic LGF $H$;
form the Stokes velocity kernels $S_{ij,h}$ and pressure kernels $P_{i,h}$.
\State Form the boundary operator $A$ from the interpolation matrices and
the interior/exterior traces of the Stokes layer potential.
\State Add one sampled-normal update for each independent hydrostatic mode,
$A_c=A+\tau N_\Gamma N_\Gamma^T$.
\State Build the componentwise Calder\'on preconditioner $H_{\mathrm{reg}}$
from $V_\alpha$, $J_\alpha$, and $W_\alpha$, including one constant-mode
update per resolved object.
\If{$\bm f\not\equiv\bm 0$}
    \State
    Form the discrete right-hand side $\bm F_h$ and evaluate the
    particular fields $(\bm u_h^f,p_h^f)$ by LGF convolution.
\Else
    \State Set $(\bm u_h^f,p_h^f)=(\bm 0,0)$.
\EndIf
\State Form the effective boundary data
$\bm b_h=\bm g_h-\Phi_+ \bm u_+^{f}-\Phi_- \bm u_-^{f}$.
\State Solve
$H_{\mathrm{reg}}A_c\bm q=H_{\mathrm{reg}}\bm b_h$ by GMRES to tolerance
$\varepsilon$; apply the LGF convolutions directly or by padded FFTs.
\State Reconstruct
$\bm u_h=\bm u_h^f+\mathcal S_h\bm q$ and
$p_h=p_h^f+\mathcal P_h\bm q$.
\State Fix the additive pressure constant according to the chosen gauge.
\end{algorithmic}
\end{algorithm}

\section{Numerical experiments}
\label{sec:numerical-experiments}

We assess four features of the method: accuracy under grid
refinement, performance of the Calder\'on preconditioner, preservation of the
discrete divergence constraint, and robustness with respect to geometry and
domain topology. The tests progress from smooth interior domains to narrow
gaps, body forcing, corner eddies, and exterior domains with multiple
obstacles.

Unless stated otherwise, $\mu=1$
(without loss of generality for steady Stokes flow, which is linear in $\mu$),
and consecutive values of $N$
approximately double the linear grid resolution in the computational domain $[-1.15,1.15]\times[-1.15,1.15]$.
Here $N$ denotes the number of MAC cells in each coordinate direction of that box.
We report componentwise
maximum-norm errors over all fluid-grid nodes:
\begin{equation}
  E_u=\max_{j,k}|u_{j,k}-u_{j,k}^{\rm exact}|,\qquad
  E_v=\max_{j,k}|v_{j,k}-v_{j,k}^{\rm exact}|,\qquad
  E_p=\max_{j,k}|p_{j,k}-p_{j,k}^{\rm exact}|.
  \label{eq:numerical-error-norms}
\end{equation}
Because pressure is defined only up to a constant, the numerical and
exact pressures are aligned to the same gauge before $E_p$ is evaluated. All
GMRES solves use the Calder\'on preconditioner and a stopping tolerance of
$10^{-10}$ unless stated otherwise. Solver tables list the
condition number of the rank-completed operator $\kappa(A_c)$ and of the preconditioned operator $\kappa(H_{\rm reg}A_c)$,
the GMRES iteration count, and the maximum discrete
divergence $\|\nabla_h\cdot\bm u\|_\infty$.

Several convergence tests use the following force-doublet solution.
Place a Stokeslet with force $\bm f$ at $\bm x_0$ and an opposing
Stokeslet with force $-\bm f$ at $-\bm x_0$. With
$\bm r^+=\bm x-\bm x_0$, $\bm r^-=\bm x+\bm x_0$, and
$r^\pm=|\bm r^\pm|$, the resulting velocity and pressure are
\begin{equation}\label{eq:doublet}
\begin{aligned}
  u_{i}(\bm{x})
  &= \frac{1}{4\pi\mu}
    \left[ \left(-\delta_{ij}\ln r^{+} + \frac{r_{i}^{+}\,r_{j}^{+}}{(r^{+})^{2}}\right)
         - \left(-\delta_{ij}\ln r^{-} + \frac{r_{i}^{-}\,r_{j}^{-}}{(r^{-})^{2}}\right) \right] f_{j}, \\
  p(\bm{x})
  &= \frac{1}{2\pi} \left[ \frac{r_{j}^{+}}{(r^{+})^{2}} - \frac{r_{j}^{-}}{(r^{-})^{2}} \right] f_{j}.
\end{aligned}
\end{equation}
When both singularities lie outside the fluid region, these fields
are smooth exact solutions in the computational domain and supply consistent
Dirichlet data for a convergence study.
In~\eqref{eq:doublet}, repeated indices are summed.

\subsection{Interior flows}

\subsubsection{Ellipse aspect-ratio study}
We first consider the family of ellipses
$x^2+\alpha^2 y^2=1$ with $\alpha\in\{1,2,4,8\}$. The two Stokeslets are
located at $(2,2)$ and $(-2,-2)$, with forces $(10,1)^T$ and
$(-10,-1)^T$, respectively. These sources remain outside every ellipse, and
the exact doublet velocity supplies the boundary data. Varying $\alpha$
tests whether the cut-cell closure remains accurate as the geometry becomes
increasingly slender.

\begin{figure}[htbp]
\centering
    \begin{subfigure}{0.3\textwidth}
        \centering
        \includegraphics[width=\textwidth, trim = 2cm 7cm 2cm 6.5cm]{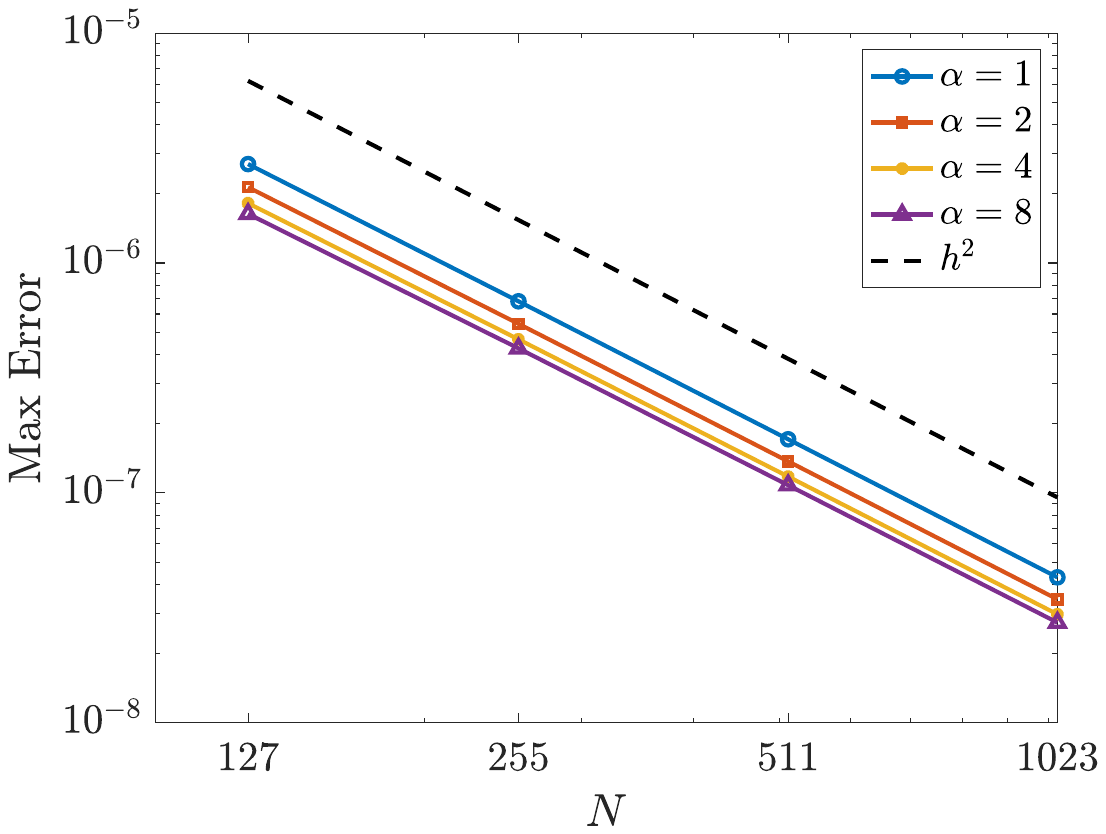}
        \caption{$u$}\label{fig:u_conv}
    \end{subfigure}
    ~
    \begin{subfigure}{0.3\textwidth}
        \centering
        \includegraphics[width=\textwidth, trim = 2cm 7cm 2cm 6.5cm]{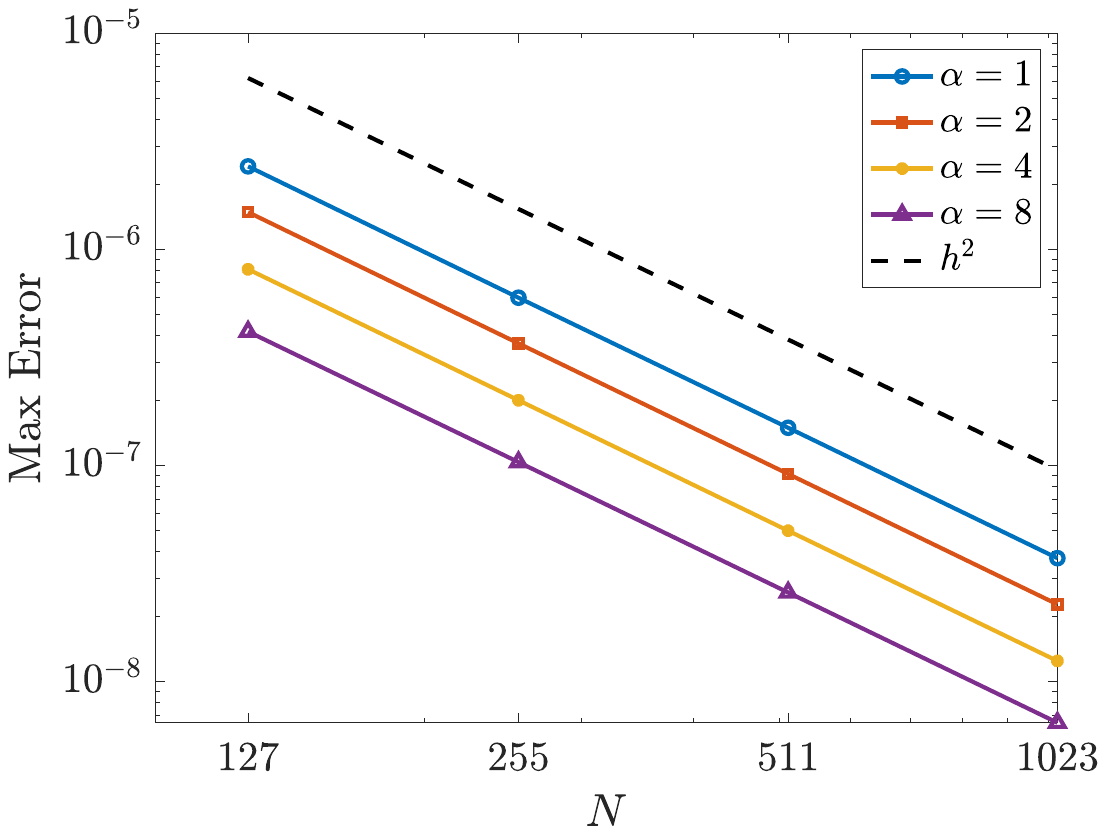}
        \caption{$v$}\label{fig:v_conv}
    \end{subfigure}
    ~
    \begin{subfigure}{0.3\textwidth}
        \centering
        \includegraphics[width=\textwidth, trim = 2cm 7cm 2cm 6.5cm]{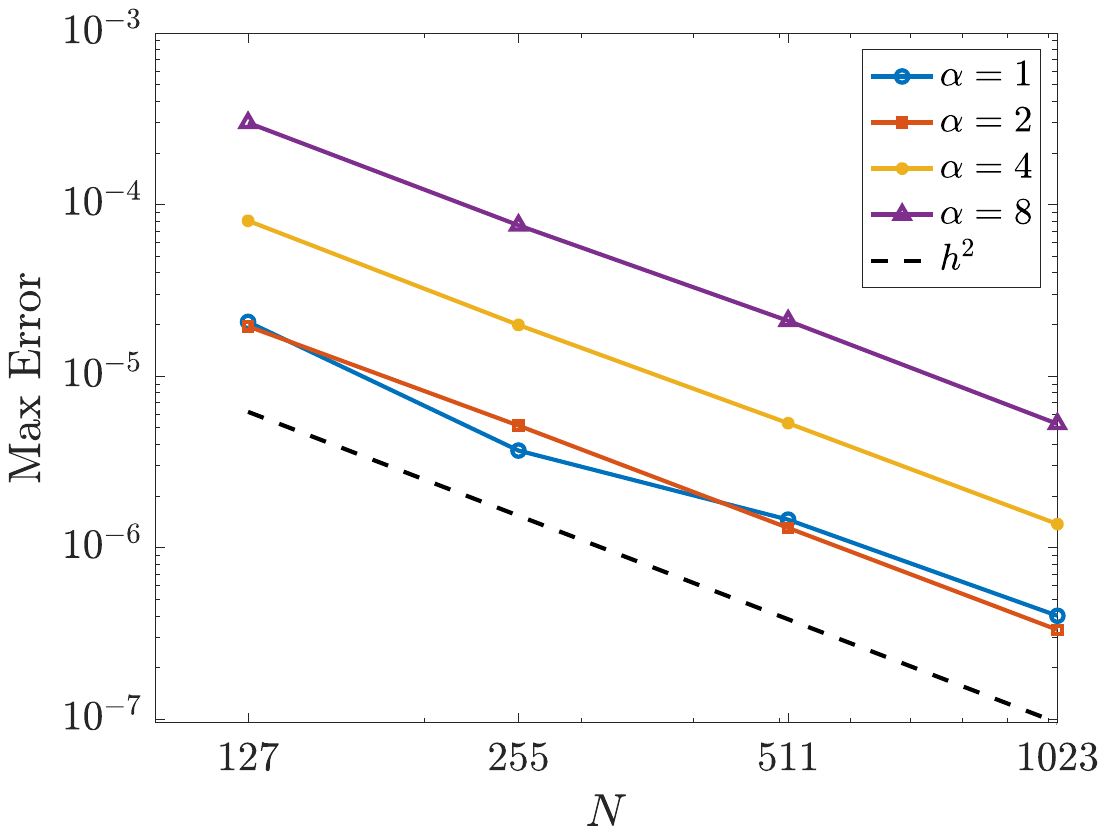}
        \caption{$p$}\label{fig:p_conv}
    \end{subfigure}
    \caption{Maximum-norm convergence for ellipses with different aspect ratios $\alpha$. The dashed line has slope two.}\label{fig:convergence}
\end{figure}

\begin{figure}[htbp]
\centering
    \begin{subfigure}{0.23\textwidth}
        \centering
        \includegraphics[width=\textwidth, trim = 2cm 7cm 2cm 6.5cm]{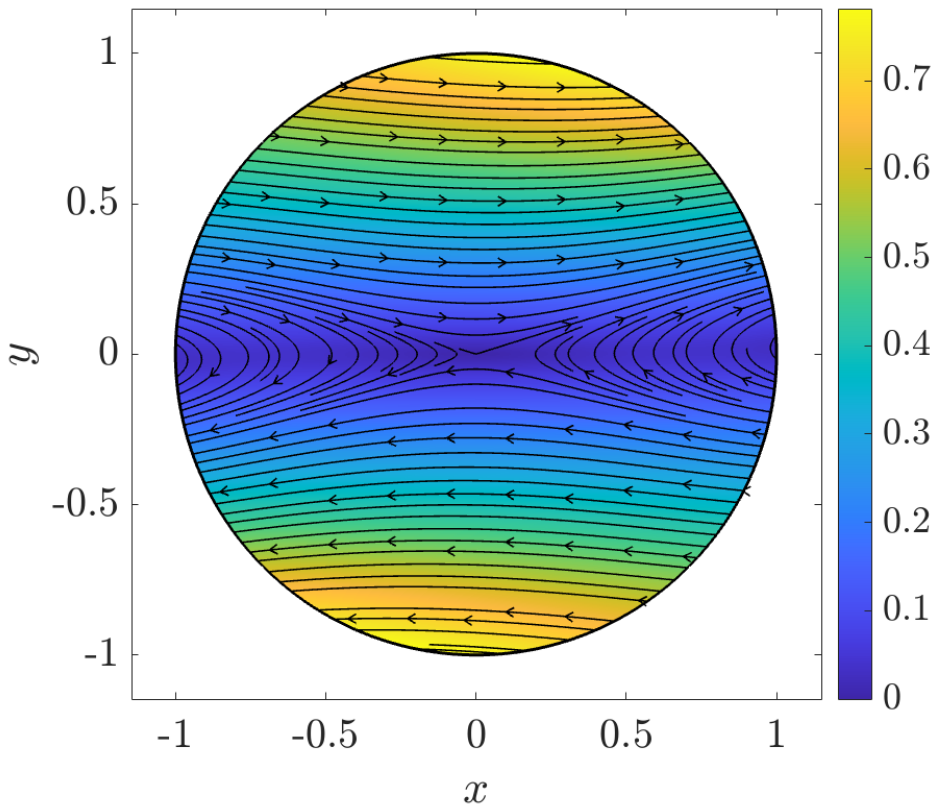}
        \caption{$\alpha=1$}\label{fig:speed-ar1}
    \end{subfigure}
    ~
    \begin{subfigure}{0.23\textwidth}
        \centering
        \includegraphics[width=\textwidth, trim = 2cm 7cm 2cm 6.5cm]{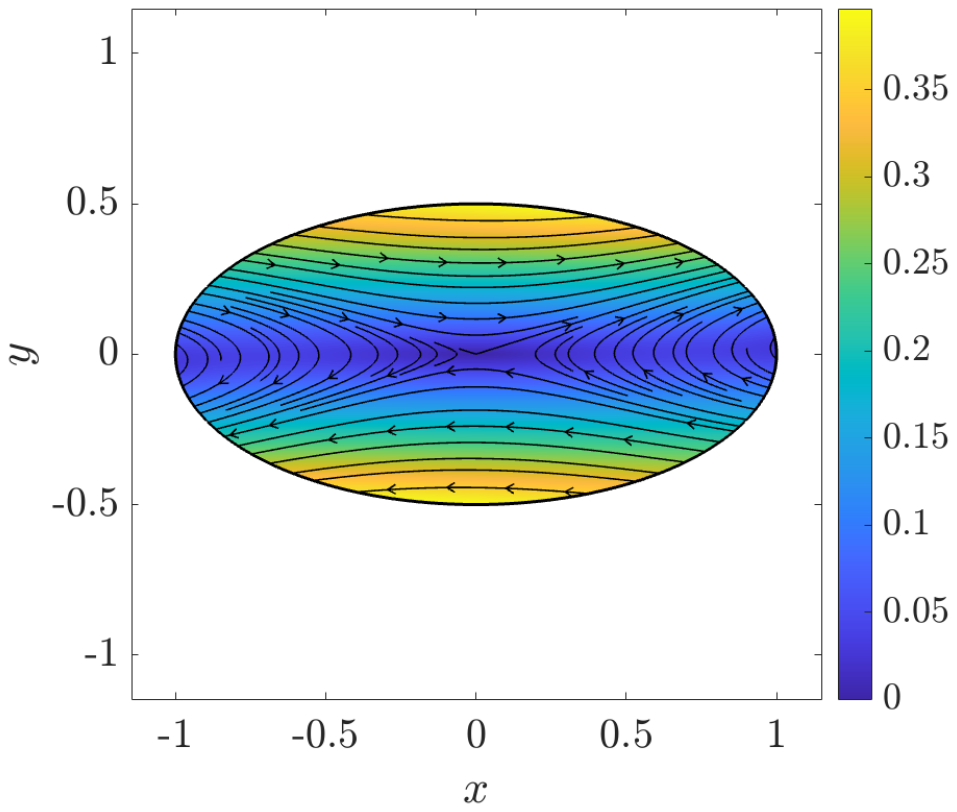}
        \caption{$\alpha=2$}\label{fig:speed-ar2}
    \end{subfigure}
    ~
    \begin{subfigure}{0.23\textwidth}
        \centering
        \includegraphics[width=\textwidth, trim = 2cm 7cm 2cm 6.5cm]{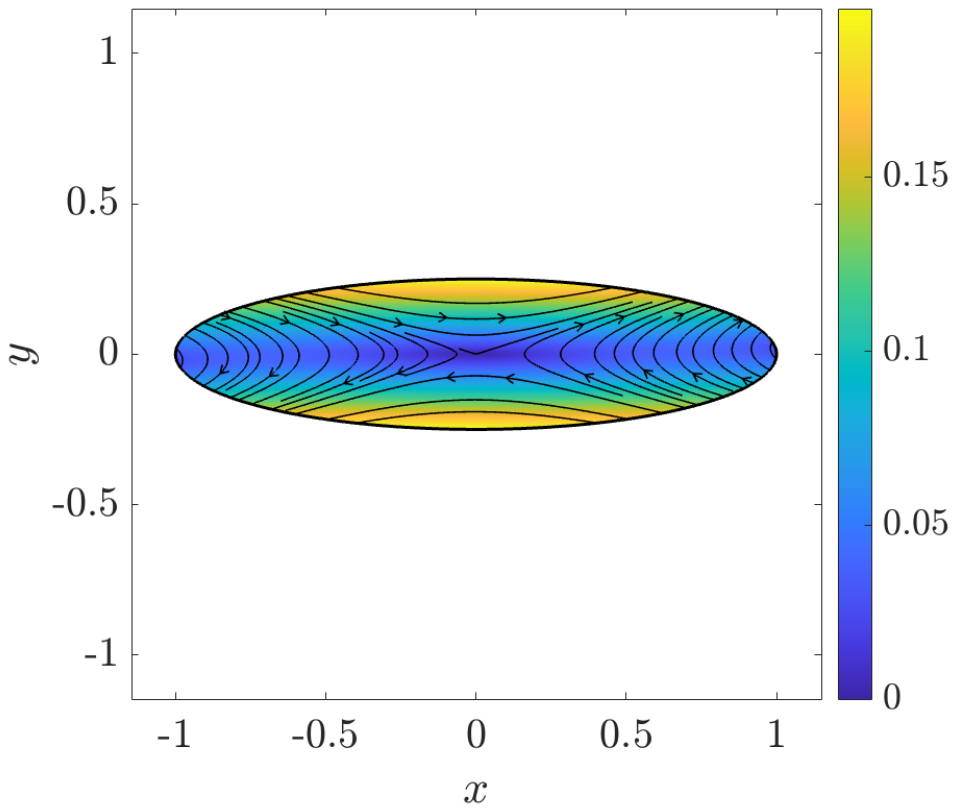}
        \caption{$\alpha=4$}\label{fig:speed-ar4}
    \end{subfigure}
    ~
    \begin{subfigure}{0.23\textwidth}
        \centering
        \includegraphics[width=\textwidth, trim = 2cm 7cm 2cm 6.5cm]{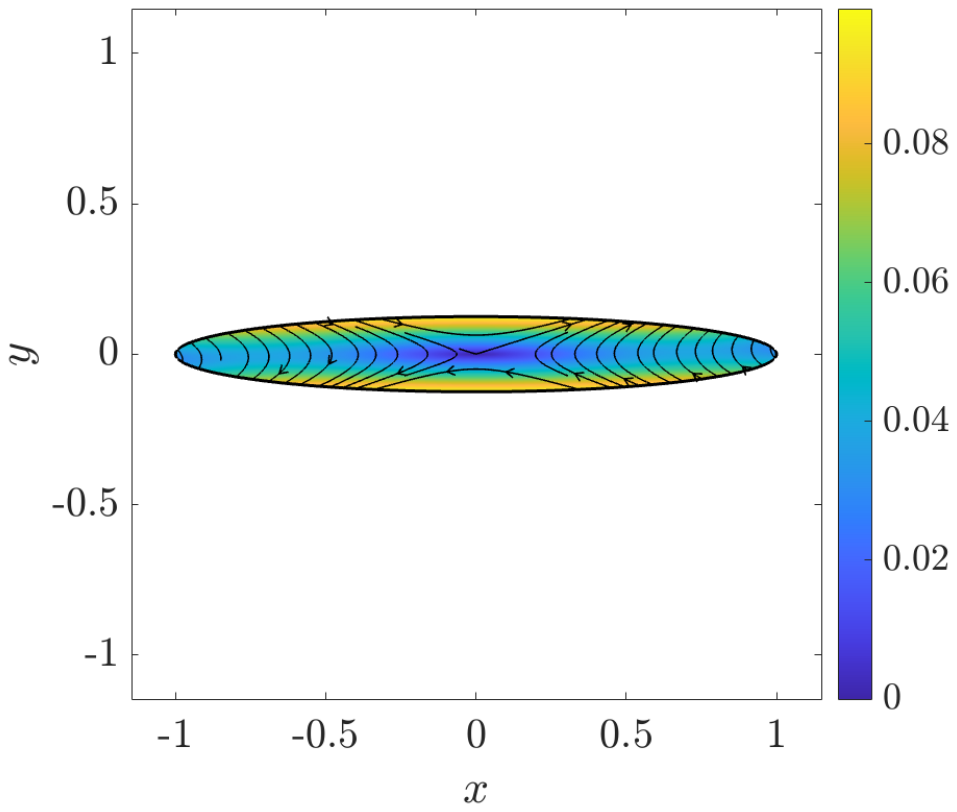}
        \caption{$\alpha=8$}\label{fig:speed-ar8}
    \end{subfigure}
    ~
    \begin{subfigure}{0.23\textwidth}
        \centering
        \includegraphics[width=\textwidth, trim = 2cm 7cm 2cm 6.5cm]{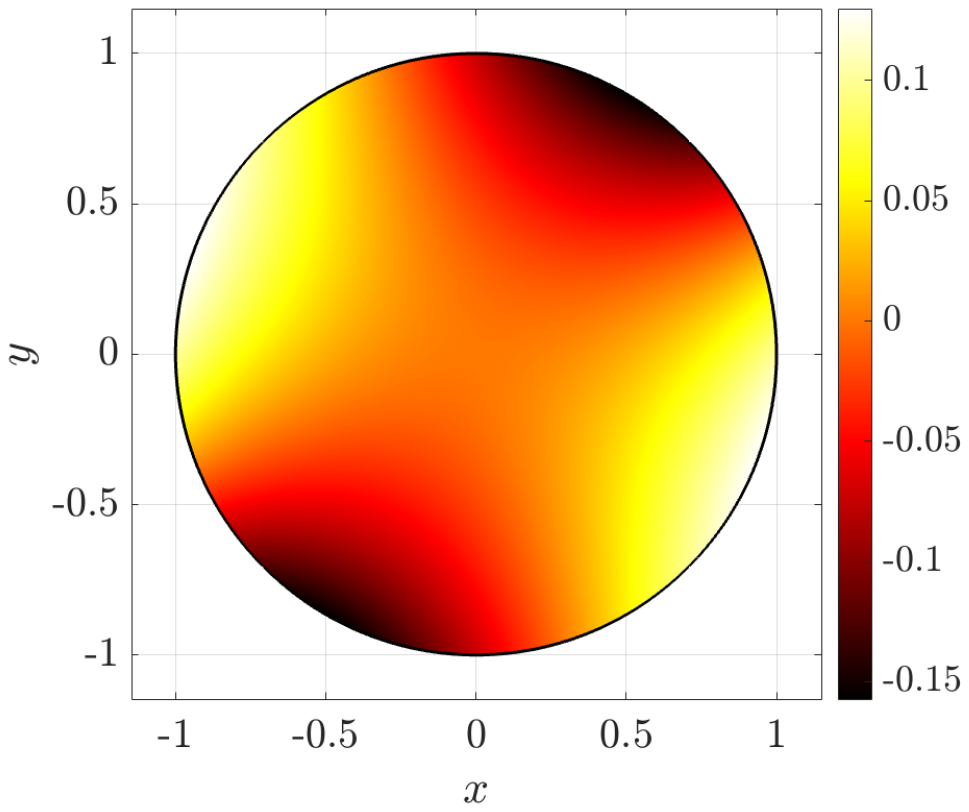}
        \caption{$\alpha=1$}\label{fig:pressure-ar1}
    \end{subfigure}
    ~
    \begin{subfigure}{0.23\textwidth}
        \centering
        \includegraphics[width=\textwidth, trim = 2cm 7cm 2cm 6.5cm]{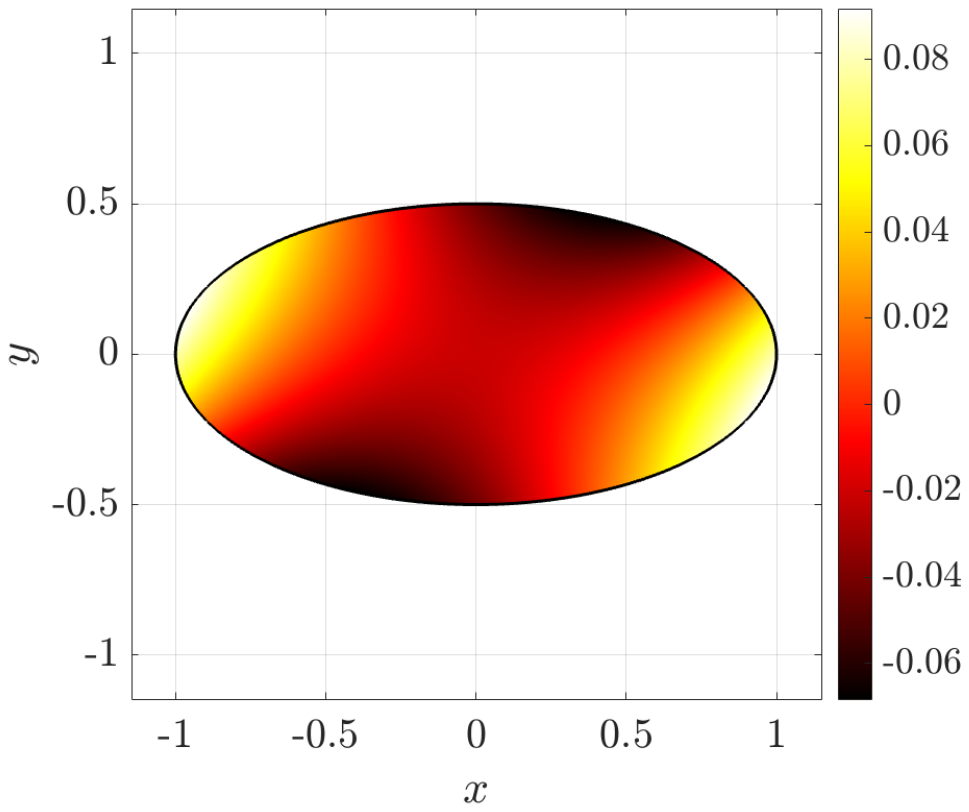}
        \caption{$\alpha=2$}\label{fig:pressure-ar2}
    \end{subfigure}
    ~
    \begin{subfigure}{0.23\textwidth}
        \centering
        \includegraphics[width=\textwidth, trim = 2cm 7cm 2cm 6.5cm]{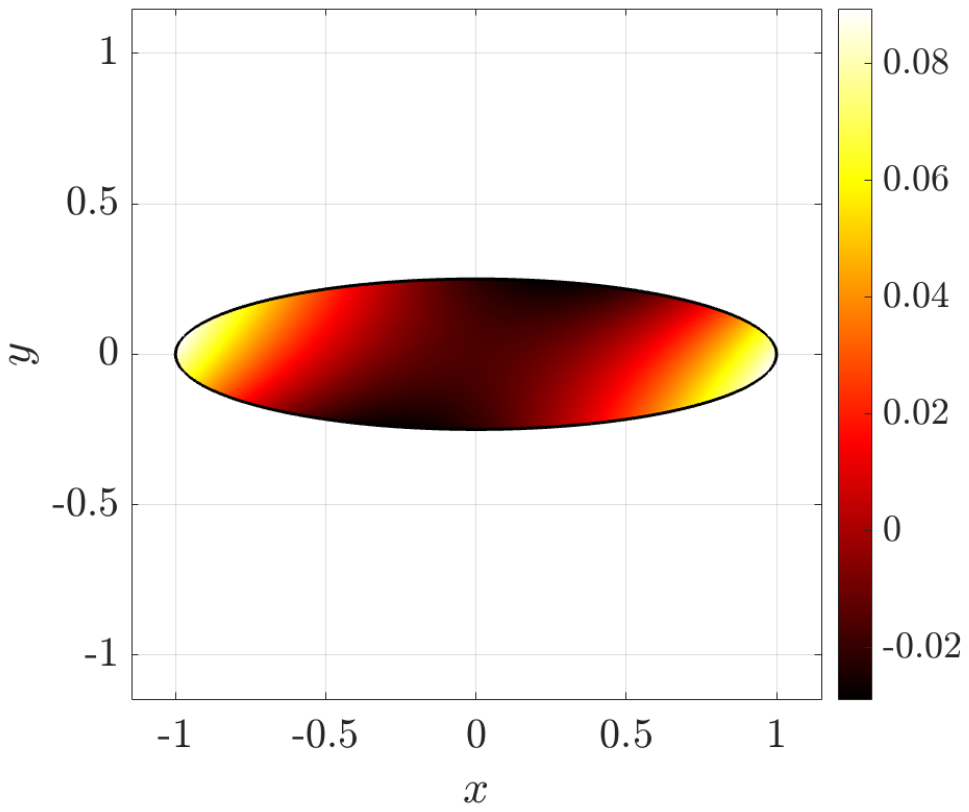}
        \caption{$\alpha=4$}\label{fig:pressure-ar4}
    \end{subfigure}
    ~
    \begin{subfigure}{0.23\textwidth}
        \centering
        \includegraphics[width=\textwidth, trim = 2cm 7cm 2cm 6.5cm]{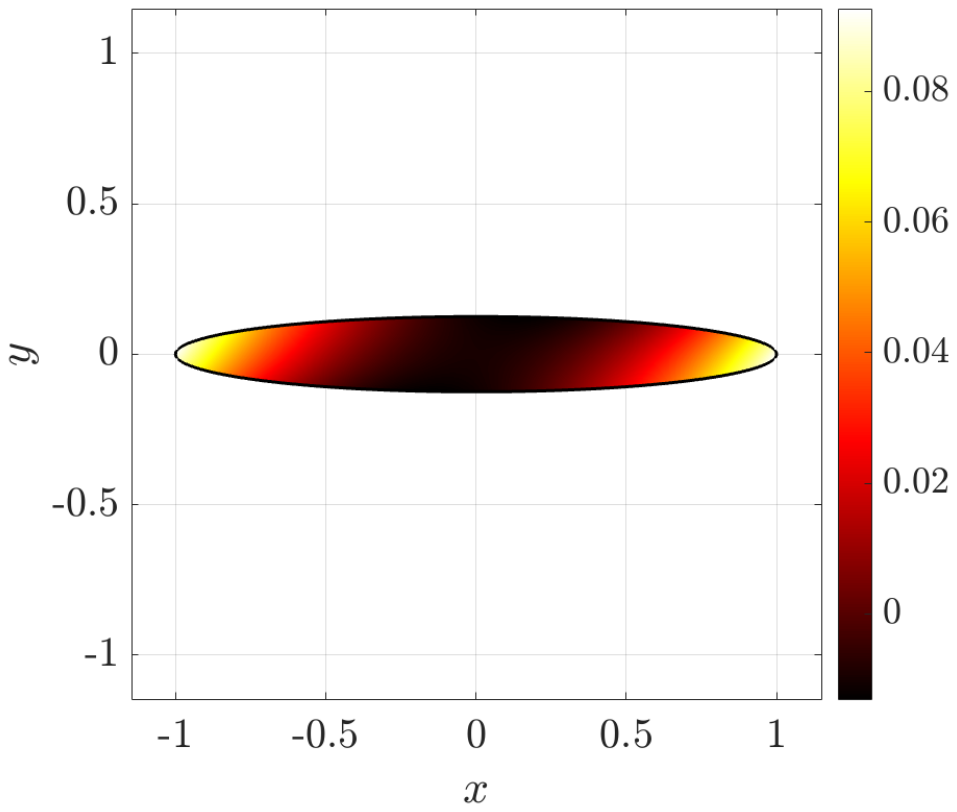}
        \caption{$\alpha=8$}\label{fig:pressure-ar8}
    \end{subfigure}
    \caption{Velocity magnitude with streamlines (top) and pressure (bottom) for ellipse aspect ratios $\alpha=1,2,4,8$ at $N=1023$.}\label{fig:vel-pressure}
\end{figure}

Figure~\ref{fig:convergence} shows nearly parallel second-order
decay for both velocity components across all four aspect ratios. Pressure
also approaches second-order convergence, although its absolute error grows
as the ellipse becomes more slender. The fields in
Figure~\ref{fig:vel-pressure} remain smooth up to the unfitted boundary and
show no visible grid-aligned artifacts.

\begin{table}[htbp]
    \centering
    \small
    \begin{tabular}{@{} S[table-format=4.0] c c @{}}
        \toprule
        {$N$} & {$\alpha=1$} & {$\alpha=2$} \\
        \midrule
         127 & \num{6.0599e4} / \num{1.2440e2} (71) & \num{4.2317e4} / \num{9.8453e1} (72) \\
         255 & \num{9.0113e4} / \num{7.7860e1} (59) & \num{1.2041e5} / \num{1.3301e2} (87) \\
         511 & \num{3.3379e5} / \num{1.4656e2} (90) & \num{2.3935e5} / \num{1.2087e2} (88) \\
        1023 & \num{1.1605e7} / \num{2.2004e3} (93) & \num{8.8150e5} / \num{1.5970e2} (106) \\
        \midrule
        {$N$} & {$\alpha=4$} & {$\alpha=8$} \\
        \midrule
         127 & \num{3.7070e4} / \num{1.0368e2} (74)  & \num{3.6958e4} / \num{2.1202e2} (79) \\
         255 & \num{1.3311e5} / \num{1.4817e2} (87)  & \num{9.6353e4} / \num{1.7869e2} (91) \\
         511 & \num{2.8017e5} / \num{1.5818e2} (101) & \num{2.4961e5} / \num{1.7878e2} (103) \\
        1023 & \num{5.1673e5} / \num{1.3307e2} (100) & \num{1.0583e6} / \num{2.5829e2} (123) \\
        \bottomrule
    \end{tabular}
    \caption{Condition numbers before and after preconditioning, reported as $\kappa(A_c)/\kappa(H_{\rm reg}A_c)$, with GMRES iteration counts in parentheses.}
    \label{tab:ellipse_metrics}
\end{table}

Table~\ref{tab:ellipse_metrics} shows that preconditioning reduces
the condition number by two to four orders of magnitude. Across all sixteen
geometry--resolution combinations, GMRES requires between 59 and 123
iterations. The iteration count grows moderately with aspect ratio, but the
preconditioner remains effective even for $\alpha=8$.

\begin{table}[htbp]
    \centering
    \small 
    \begin{tabular}{@{} S[table-format=4.0] *{4}{S[table-format=1.4e-2]} @{}}
        \toprule
        {$N$} & {$\nabla_h\cdot\bm{u}\ (\alpha=1)$} & {$\nabla_h\cdot\bm{u}\ (\alpha=2)$} & {$\nabla_h\cdot\bm{u}\ (\alpha=4)$} & {$\nabla_h\cdot\bm{u}\ (\alpha=8)$} \\
        \midrule
         127 & 1.1792e-11 & 2.1749e-12 & 1.0691e-12 & 1.0618e-12 \\
         255 & 5.4900e-11 & 9.4904e-12 & 5.6735e-12 & 4.9016e-12 \\
         511 & 4.5181e-10 & 5.1704e-11 & 2.7782e-11 & 2.2669e-11 \\
        1023 & 2.5178e-09 & 3.8415e-10 & 2.1967e-10 & 1.3543e-10 \\
        \bottomrule
    \end{tabular}
    \caption{Maximum discrete divergence for ellipses of different aspect ratios and grid resolutions.}
    \label{tab:divergence}
\end{table}

The divergence values in Table~\ref{tab:divergence} remain between
$10^{-12}$ and $2.5\times10^{-9}$. Their mild increase on finer grids is
consistent with accumulated floating-point and iterative-solver error; it
does not indicate a loss of the algebraic MAC divergence constraint.

\subsubsection{Taylor--Couette flow in an annulus}
\label{sec:taylor_couette}

We next consider steady Taylor--Couette flow between two concentric circles, whose circular boundaries cut the Cartesian grid arbitrarily. Unlike the preceding manufactured solution, this
benchmark has a simple radial profile and therefore directly tests cut-cell
interpolation and rotational symmetry on the Cartesian MAC grid.

The inner and outer radii are $R_i=0.3$ and $R_o=1$. The inner
cylinder rotates with angular velocity $\Omega_i=1$, while the outer cylinder
is stationary, $\Omega_o=0$. In the Stokes limit the pressure is constant,
and the exact azimuthal velocity is
\begin{equation}
    u_\theta(r) = A r + \frac{B}{r}, \quad A = \frac{\Omega_o R_o^2 - \Omega_i R_i^2}{R_o^2 - R_i^2}, \quad B = \frac{(\Omega_i - \Omega_o) R_i^2 R_o^2}{R_o^2 - R_i^2},
\end{equation}
where $r=\sqrt{x^2+y^2}$. The Cartesian components are
\begin{align*}
u(x,y) &= -u_\theta(r) \frac{y}{r},\\
v(x,y) &= u_\theta(r) \frac{x}{r}.
\end{align*} 
The annular boundary is represented by the level-set function
$\phi(x,y)=\max(r-R_o,R_i-r)$.

\begin{figure}[htbp]
\centering
    \begin{subfigure}{0.3\textwidth}
        \centering
        \includegraphics[width=\textwidth, trim = 2cm 7cm 2cm 6.5cm]{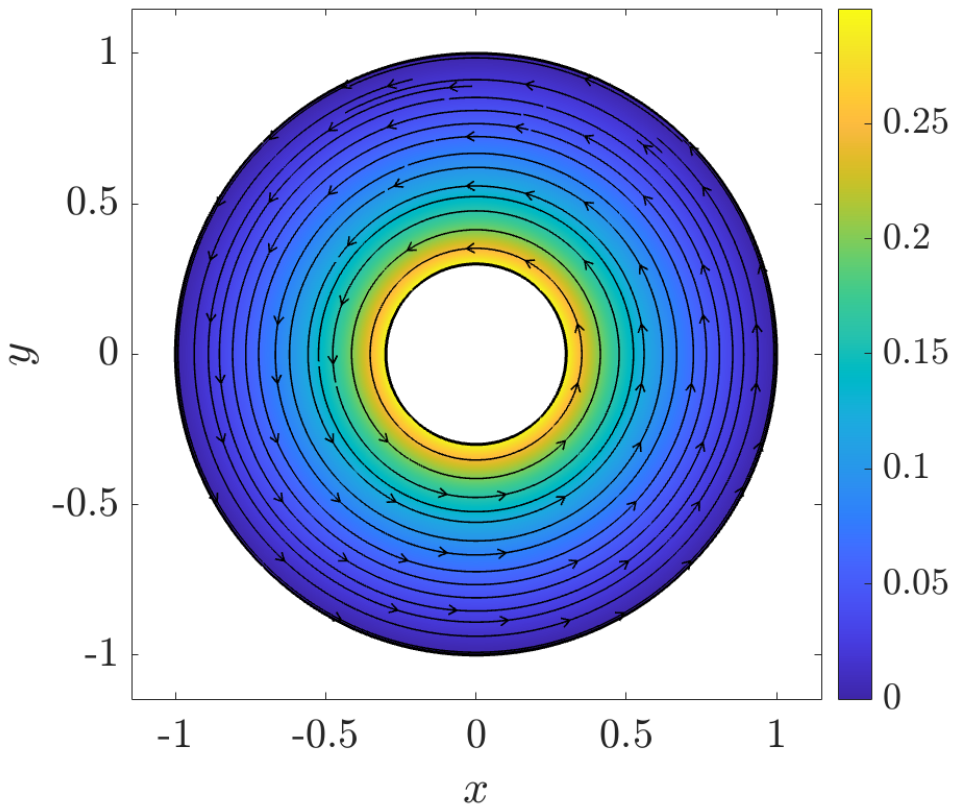}
        \caption{Velocity}\label{fig:couette_speed}
    \end{subfigure}
    ~
    \begin{subfigure}{0.3\textwidth}
        \centering
        \includegraphics[width=\textwidth, trim = 2cm 7cm 2cm 6.5cm]{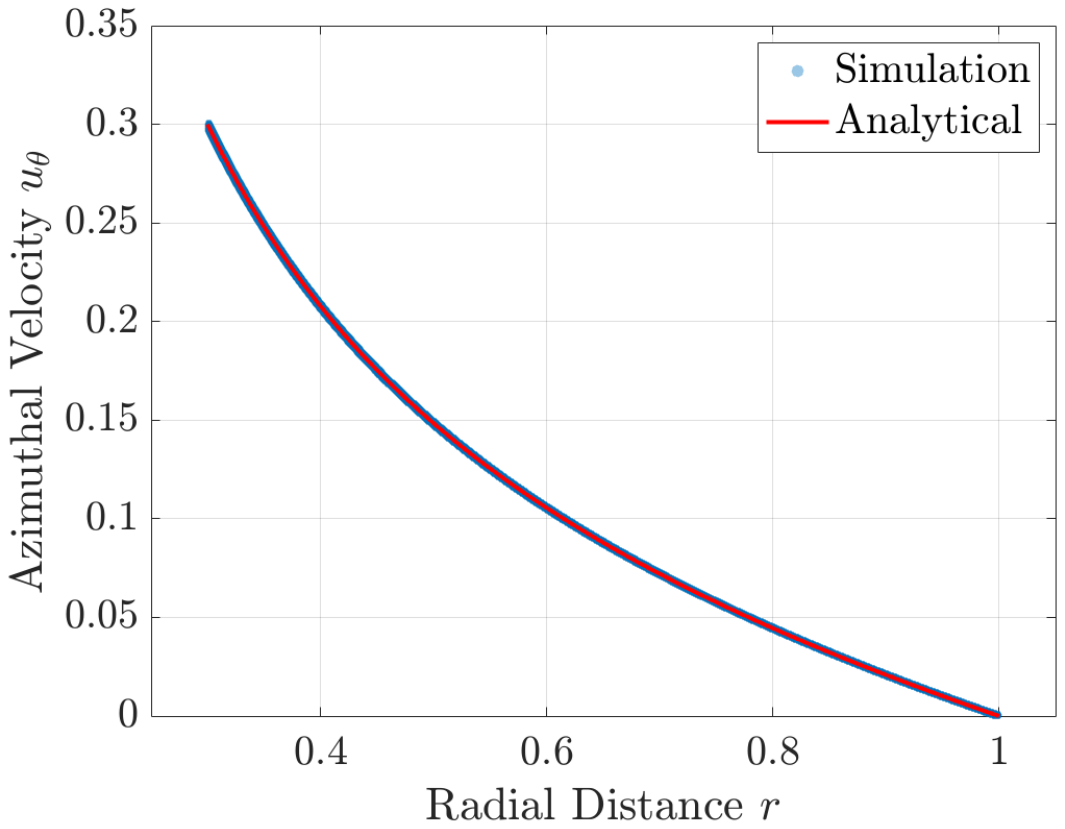}
        \caption{Azimuthal velocity}\label{fig:couette_azimuthal}
    \end{subfigure}
    ~
    \begin{subfigure}{0.3\textwidth}
        \centering
        \includegraphics[width=\textwidth, trim = 2cm 7cm 2cm 6.5cm]{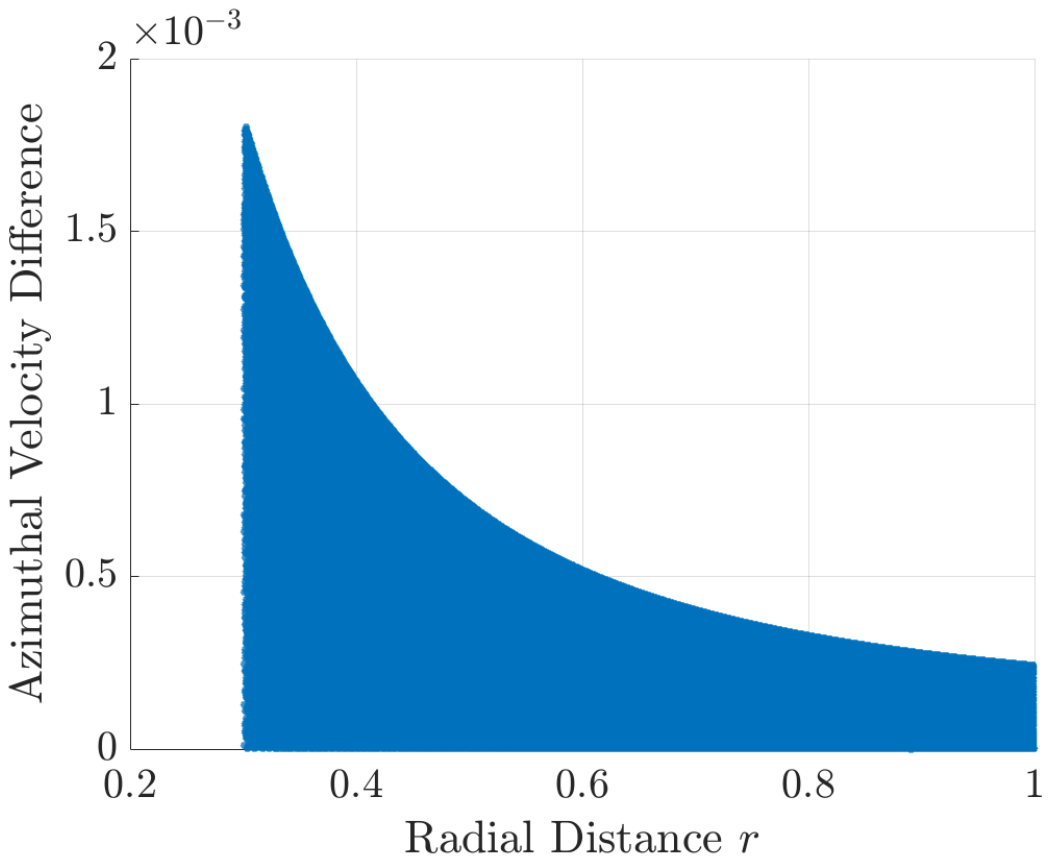}
        \caption{Azimuthal velocity difference}\label{fig:couette_difference}
    \end{subfigure}
    \caption{Taylor--Couette solution at $N=1023$: velocity field, computed azimuthal profile, and pointwise azimuthal-velocity error.}\label{fig:couette_flow}
\end{figure}

The concentric streamlines in Figure~\ref{fig:couette_flow}(a) show
that the Cartesian discretization does not introduce a visible preferred
direction. The computed values in panel (b) collapse onto the radial analytic
profile, while panel (c) localizes the remaining error near the inner unfitted
boundaries.

\begin{table}[htbp]
    \centering
    \small
    \begin{tabular}{
        @{} 
        S[table-format=4.0] 
        *{3}{S[table-format=1.4e-2] S[table-format=1.2]} 
        @{}
    }
        \toprule
        {$N$} & {$E_u$} & {Rate} & {$E_v$} & {Rate} & {$E_p$} & {Rate} \\
        \midrule
         127 & 5.2835e-05 & {--} & 5.2835e-05 & {--} & 1.2282e-03 & {--} \\
         255 & 1.3496e-05 & 1.97 & 1.3496e-05 & 1.97 & 4.1894e-04 & 1.55 \\
         511 & 3.4338e-06 & 1.97 & 3.4338e-06 & 1.97 & 1.1813e-04 & 1.83 \\
        1023 & 8.7357e-07 & 1.97 & 8.7358e-07 & 1.97 & 3.4001e-05 & 1.80 \\
        \bottomrule
    \end{tabular}
    \caption{Maximum-norm errors and convergence rates for the Taylor--Couette benchmark.}
    \label{tab:couette_errors}
\end{table}

Table~\ref{tab:couette_errors} gives rates of $1.97$ for both
velocity components at every refinement, demonstrating uniform second-order
velocity convergence. The pressure rate rises from $1.55$ to $1.80$ as the
grid is refined. Because the exact pressure is constant, this column tests
pressure recovery and gauge alignment rather than a nontrivial pressure
profile.



\begin{figure}[htbp]
\centering
    \begin{subfigure}{0.41\textwidth}
        \centering
        \includegraphics[width=\textwidth, trim = 2cm 7cm 2cm 6.5cm]{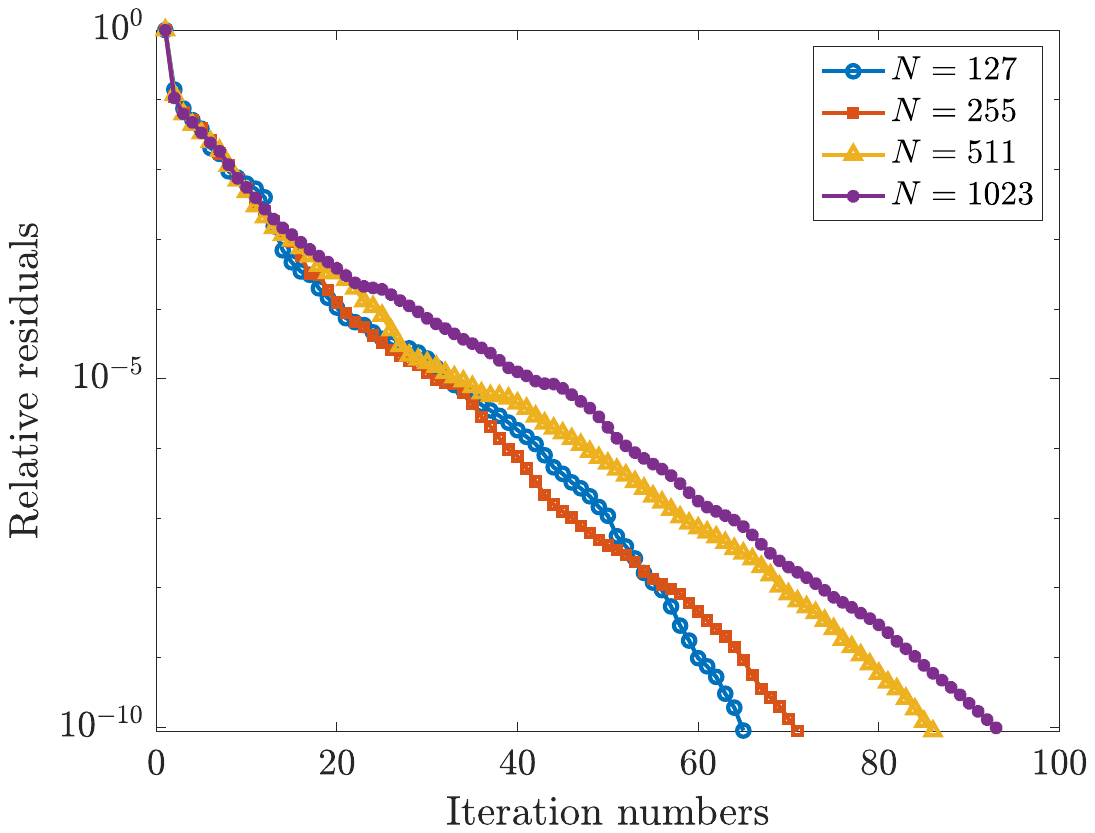}
    \caption{}
    \label{fig:couette_iter}
    \end{subfigure}
    ~
    \begin{subfigure}{0.45\textwidth}
        \centering
        \includegraphics[width=\textwidth, trim = 0cm 0cm 0cm 0cm]{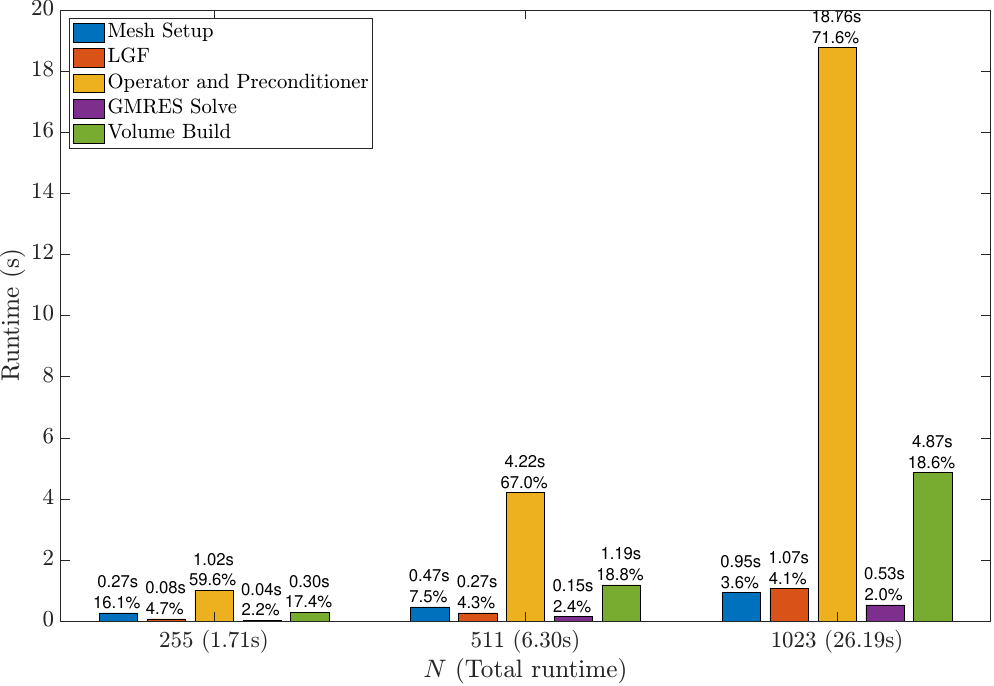}
        \caption{}
        \label{fig:couette_runtime}
    \end{subfigure}
    \caption{GMRES relative residuals and runtimes for the Taylor--Couette benchmark.}\label{fig:iterations_runtime}
\end{figure}

Figure~\ref{fig:iterations_runtime} presents the GMRES relative residuals; the uniformly exponential decay across meshes confirms the effectiveness of the preconditioner. The runtime breakdown shows that constructing the operator $A_c$ and the preconditioner $H_{\rm reg}$ takes approximately 60--72\% of total runtime. The total runtimes approximately quadruple upon mesh refinement from 1.71s to 6.30s and to 26.19s from $N=255$ to $N=1023$, consistent with near-optimal $\mathcal O(N^2\log N)$ scaling in the number of grid cells. We present residual histories and runtimes only for Taylor--Couette flow; the other cases are similar.

\begin{table}[htbp]
    \centering
    \small
    \begin{tabular}{
        @{} 
        S[table-format=4.0] 
        S[table-format=2.0]
        *{3}{S[table-format=1.4e-2]} 
        @{}
    }
        \toprule
        {$N$} & {Iterations} & {$\kappa(A_c)$} & {$\kappa(H_{\rm reg}A_c)$} & {$\nabla_h\cdot \bm{u}$} \\
        \midrule
         127 & 64 & 7.7788e4 & 2.0713e2 & 4.1455e-12 \\
         255 & 70 & 1.4381e5 & 2.0021e2 & 2.3489e-11 \\
         511 & 85 & 4.2946e5 & 2.2667e2 & 9.6010e-11 \\
        1023 & 92 & 1.4969e7 & 3.3102e3 & 5.6478e-10 \\
        \bottomrule
    \end{tabular}
    \caption{GMRES iterations, condition numbers before and after preconditioning, and maximum discrete divergence for Taylor--Couette flow.}
    \label{tab:couette_metric}
\end{table}

As shown in Table~\ref{tab:couette_metric}, the
condition number of $A_c$ grows from $7.8\times10^4$ to $1.5\times10^7$. The
Calder\'on preconditioner reduces these values substantially, although the
finest-grid preconditioned condition number still rises to $3.3\times10^3$. GMRES requires
64--92 iterations, and the maximum discrete divergence remains below
$5.7\times10^{-10}$. Thus the preconditioner controls the solve sufficiently
for the tested resolutions while the MAC incompressibility constraint is
retained to solver accuracy.


\subsubsection{Two-hole domain}

To test a multiply connected interior domain, we remove two circles
from the unit disk. Their centers are $(-0.4,0)$ and $(0.4,0.2)$, and their
radii are $0.3$ and $0.2$, respectively. Boundary data on all three components
are sampled from the force-doublet solution in \eqref{eq:doublet}.

\begin{table}[htbp]
    \centering
    \small
    \begin{tabular}{
        @{} 
        S[table-format=4.0] 
        *{3}{S[table-format=1.4e-2] S[table-format=1.2]} 
        @{}
    }
        \toprule
        {$N$} & {$E_u$} & {Rate} & {$E_v$} & {Rate} & {$E_p$} & {Rate} \\
        \midrule
         127 & 4.1571e-06 & {--} & 2.4229e-06 & {--} & 1.0489e-04 & {--} \\
         255 & 1.0589e-06 & 1.97 & 5.9806e-07 & 2.02 & 2.6692e-05 & 1.97 \\
         511 & 2.6720e-07 & 1.99 & 1.4972e-07 & 2.00 & 6.7469e-06 & 1.98 \\
        1023 & 6.7122e-08 & 1.99 & 3.7359e-08 & 2.00 & 1.7409e-06 & 1.95 \\
        \bottomrule
    \end{tabular}
    \caption{Maximum-norm errors and convergence rates for the force-doublet solution in the two-hole domain.}
    \label{tab:twoholes_errors}
\end{table}

\begin{table}[htbp]
    \centering
    \small
    \begin{tabular}{
        @{} 
        S[table-format=4.0] 
        S[table-format=3.0]
        *{3}{S[table-format=1.4e-2]} 
        @{}
    }
        \toprule
        {$N$} & {Iterations} & {$\kappa(A_c)$} & {$\kappa(H_{\rm reg}A_c)$} & {$\nabla_h\cdot \bm{u}$} \\
        \midrule
         127 & 122 & 8.8930e04 & 1.2676e03 & 1.1789e-11 \\
         255 & 113 & 1.6597e05 & 1.2691e03 & 5.4896e-11 \\
         511 & 153 & 1.2302e06 & 2.6562e03 & 4.5181e-10 \\
        1023 & 137 & 1.7261e07 & 5.5154e03 & 2.5178e-09 \\
        \bottomrule
    \end{tabular}
    \caption{GMRES iterations, condition numbers before and after preconditioning, and maximum discrete divergence in the two-hole domain.}
    \label{tab:twoholes_solver}
\end{table}

\begin{figure}[htbp]
\centering
    \begin{subfigure}{0.45\textwidth}
        \centering
        \includegraphics[width=\textwidth, trim = 2cm 7cm 2cm 6.5cm]{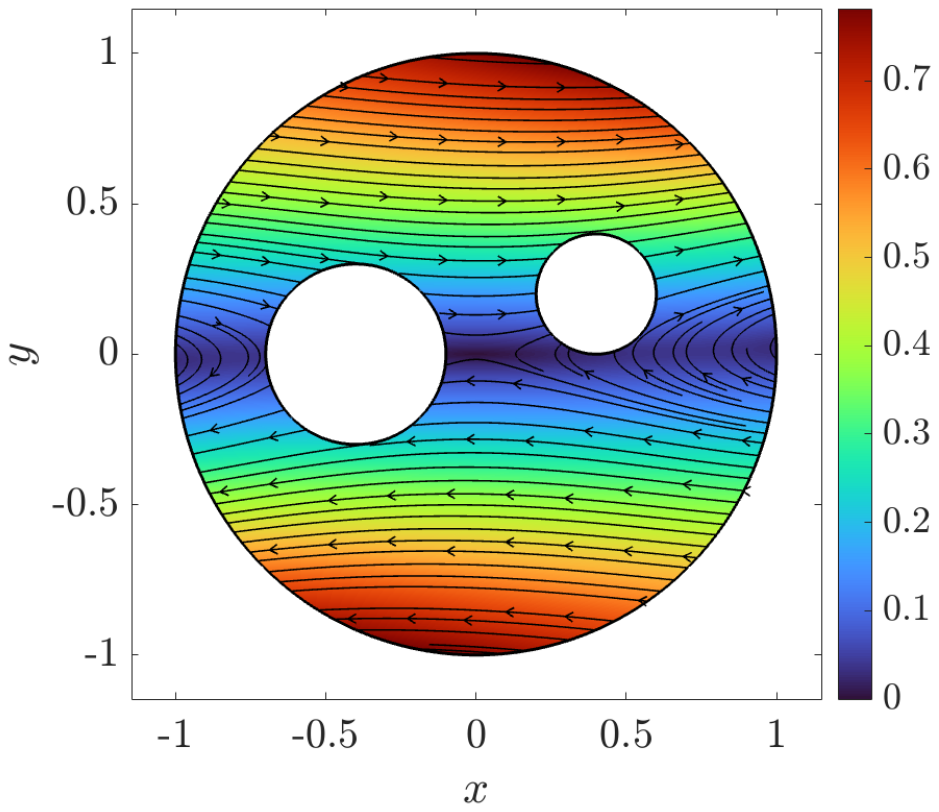}
        \caption{Velocity}\label{fig:twoholes_speed}
    \end{subfigure}
    ~
    \begin{subfigure}{0.45\textwidth}
        \centering
        \includegraphics[width=\textwidth, trim = 2cm 7cm 2cm 6.5cm]{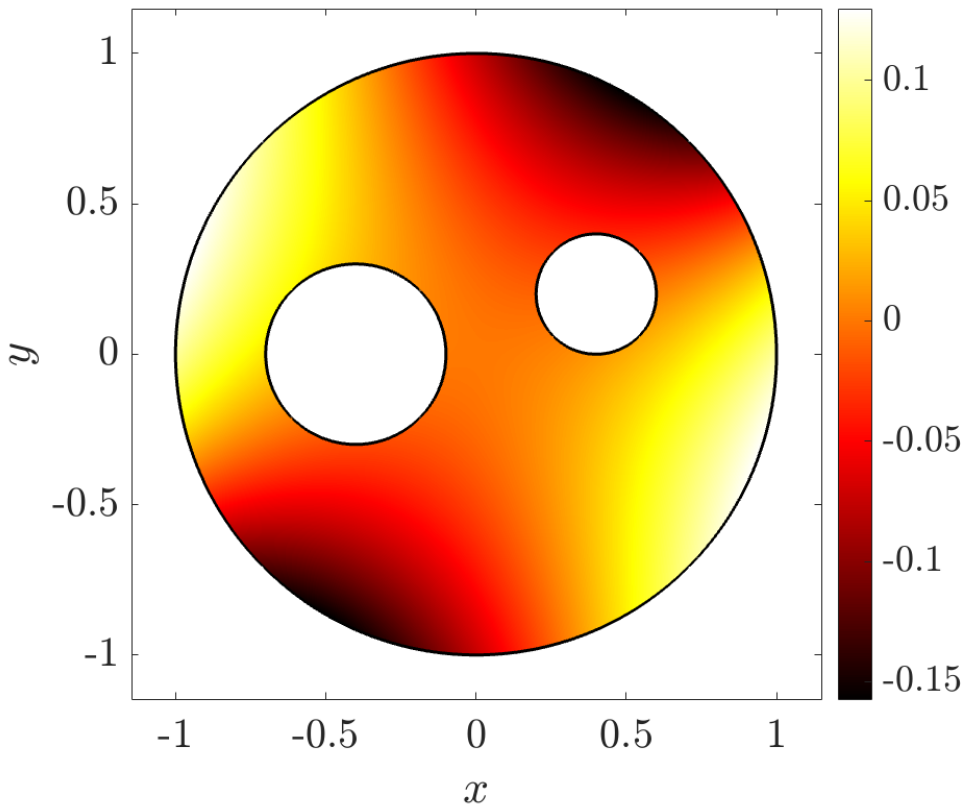}
        \caption{Pressure}\label{fig:twoholes_pressure}
    \end{subfigure}
    \caption{Velocity magnitude with streamlines and pressure in the two-hole domain at $N=1023$.}\label{fig:two_holes}
\end{figure}

Table~\ref{tab:twoholes_errors} shows second-order convergence for
both velocity components and for pressure despite the three boundary
components. Preconditioning reduces the condition number by factors ranging
from about $70$ on the coarsest grid to more than $3\times10^3$ on the
finest grid in Table~\ref{tab:twoholes_solver}. GMRES uses 113--153 iterations, and the maximum divergence stays
below $2.6\times10^{-9}$. Figure~\ref{fig:two_holes} confirms that the
solution remains smooth around both embedded boundaries.

\subsubsection{Narrow-gap three-component domain}
We next move the two circular holes close together. Their centers
are $(-0.3,0)$ and $(0.2,0)$ and their radii are $0.29$ and $0.19$, leaving a
gap of only $0.02$ between the boundaries. The outer boundary remains the unit
circle, and the force-doublet solution again supplies the data. This case
tests the sensitivity of the boundary closure and preconditioner to nearly
touching components.

\begin{table}[htbp]
    \centering
    \small
    \begin{tabular}{
        @{} 
        S[table-format=4.0] 
        *{3}{S[table-format=1.4e-2] S[table-format=1.2]} 
        @{}
    }
        \toprule
        {$N$} & {$E_u$} & {Rate} & {$E_v$} & {Rate} & {$E_p$} & {Rate} \\
        \midrule
         127 & 3.6647e-06 & {--} & 4.1043e-06 & {--} & 3.6862e-03 & {--} \\
         255 & 9.2874e-07 & 1.98 & 9.1154e-07 & 2.17 & 6.2074e-04 & 2.57 \\
         511 & 2.3410e-07 & 1.99 & 2.2908e-07 & 1.99 & 1.5822e-04 & 1.97 \\
        1023 & 5.8768e-08 & 1.99 & 5.6987e-08 & 2.01 & 3.9855e-05 & 1.99 \\
        \bottomrule
    \end{tabular}
    \caption{Maximum-norm errors and convergence rates for the force-doublet solution in the narrow-gap domain.}
    \label{tab:close_circles_errors}
\end{table}

\begin{table}[htbp]
    \centering
    \small
    \begin{tabular}{
        @{} 
        S[table-format=4.0] 
        S[table-format=3.0]
        *{3}{S[table-format=1.4e-2]} 
        @{}
    }
        \toprule
        {$N$} & {Iterations} & {$\kappa(A_c)$} & {$\kappa(H_{\rm reg}A_c)$} & {$\nabla_h\cdot \bm{u}$} \\
        \midrule
         127 & 141 & 2.6046e05 & 8.3943e03 & 1.1784e-11 \\
         255 & 117 & 1.6950e05 & 2.9591e03 & 5.4894e-11 \\
         511 & 167 & 6.4832e05 & 3.5393e03 & 4.5183e-10 \\
        1023 & 146 & 1.6974e07 & 5.5851e03 & 2.5179e-09 \\
        \bottomrule
    \end{tabular}
    \caption{GMRES iterations, condition numbers before and after preconditioning, and maximum discrete divergence in the narrow-gap domain.}
    \label{tab:close_circles_solver}
\end{table}

The two velocity errors in Table~\ref{tab:close_circles_errors}
retain second-order convergence. The pressure error is comparatively large on
the coarsest grid, for which the $0.02$ gap is only weakly resolved, but the
rate settles to approximately two on the last two refinements. The
preconditioned condition numbers remain between $3.0\times10^3$ and
$8.4\times10^3$, and GMRES takes 117--167 iterations in Table~\ref{tab:close_circles_solver}. Relative to the
separated-hole case, the narrow gap is therefore more demanding, but it does
not change the asymptotic accuracy. The divergence remains below
$2.6\times10^{-9}$.

\begin{figure}[htbp]
\centering
    \begin{subfigure}{0.45\textwidth}
        \centering
        \includegraphics[width=\textwidth, trim = 2cm 7cm 2cm 6.5cm]{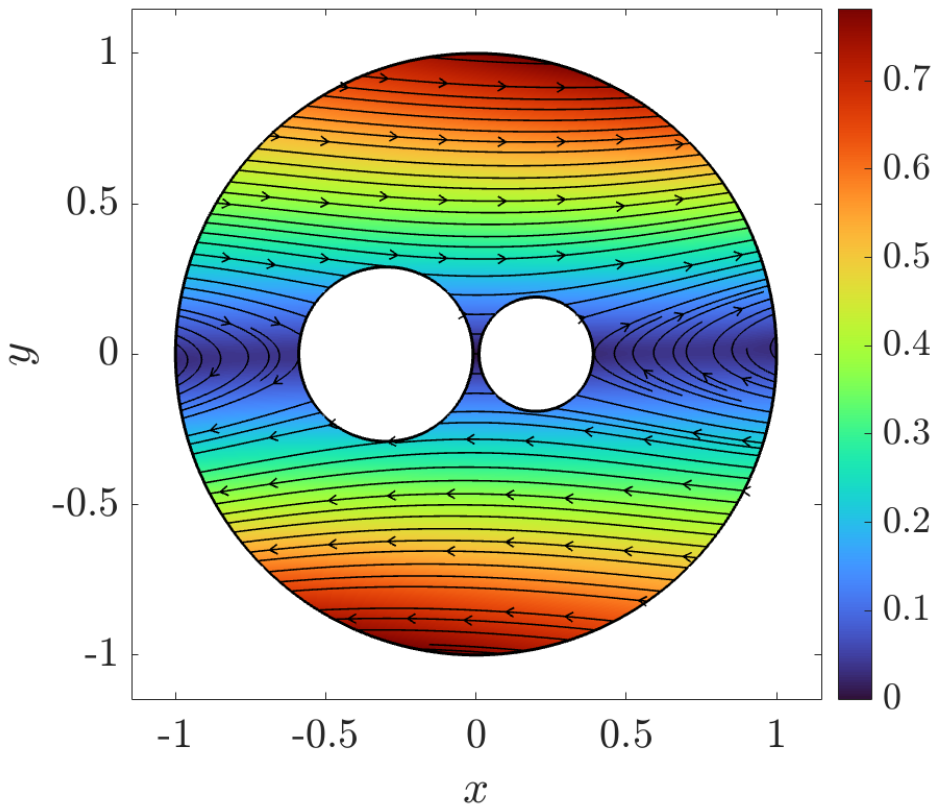}
        \caption{Velocity}\label{fig:close_speed}
    \end{subfigure}
    ~
    \begin{subfigure}{0.45\textwidth}
        \centering
        \includegraphics[width=\textwidth, trim = 2cm 7cm 2cm 6.5cm]{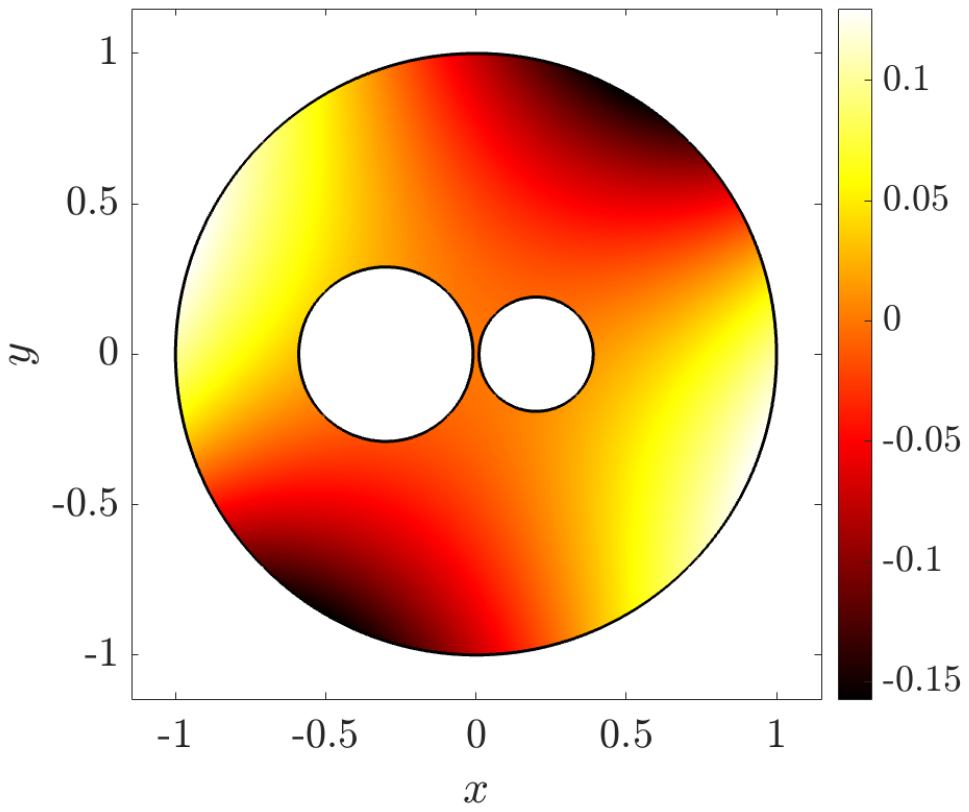}
        \caption{Pressure}\label{fig:close_pressure}
    \end{subfigure}
    \caption{Velocity magnitude with streamlines and pressure in the narrow-gap domain at $N=1023$.}\label{fig:close_circles}
\end{figure}

Figure~\ref{fig:close_circles} plots the approximated solution for the velocity magnitude with streamlines and pressure in a narrow-gap setup.

\subsubsection{\texorpdfstring{Moffatt eddies in a $30^\circ$ wedge}{Moffatt eddies in a 30-degree wedge}}
\label{sec:moffatt_qualitative}

This example tests whether the method can resolve localized flow
structures over several spatial scales. A remote disturbance in a sufficiently
sharp Stokes corner generates a sequence of counter-rotating Moffatt eddies
whose sizes and intensities decrease geometrically toward the apex
\citep{moffatt1964viscous}. For a wedge of total opening angle $30^\circ$
(half-angle $15^\circ$), the predicted asymptotic ratio of consecutive
eddy-center distances is $r_n/r_{n+1}\approx2.10$.

The wedge apex is at $y=-0.8$, and a tangentially moving curved
boundary drives the flow with unit tangent velocity. On the $N=1023$ MAC grid, the
preconditioned system has condition number $1.82\times10^3$ and GMRES
converges in 82 iterations. The maximum discrete divergence is
$1.45\times10^{-7}$.

\begin{figure}[htbp]
\centering
    \includegraphics[width=0.5\textwidth, trim = 2cm 7cm 2cm 6.5cm]{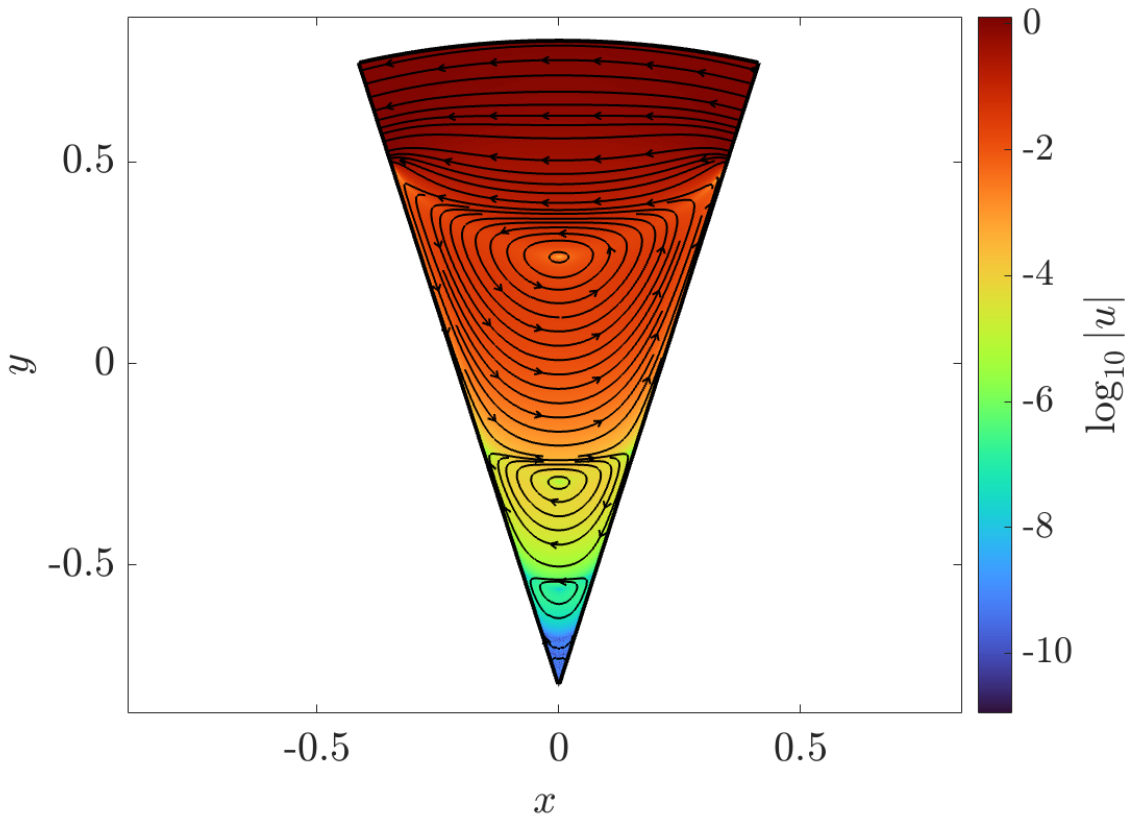}
    \caption{Moffatt eddies in a $30^\circ$ wedge at $N=1023$; color denotes $\log_{10}|\bm u|$.}
    \label{fig:moffatt}
\end{figure}

We locate the first four vortex centers from zero crossings of the
horizontal velocity, $u=0$, along the symmetry line $x=0$ in
Figure~\ref{fig:moffatt}. Their distances from the apex are
$r_1=0.899657$, $r_2=0.427323$, $r_3=0.202243$, and $r_4=0.096529$,
which give $r_1/r_2=2.105$, $r_2/r_3=2.113$, and $r_3/r_4=2.095$. A visualization is given in Figure~\ref{fig:moffatt}.

Thus the experiment resolves
three successive scale reductions accurately; a dedicated refinement study
would be needed to attribute the remaining discrepancy to a specific source.

\subsubsection{Interior flow with body forcing}

To verify the volume-potential construction of
Section~\ref{sec:inhomogeneous_forcing}, we return to the unit disk
($\alpha=1$ in the ellipse study) and prescribe the smooth exact fields
\begin{subequations}
\begin{align}
u &= \sin(\pi x)\cos(\pi y),\\
v &= -\cos(\pi x)\sin(\pi y),\\
p &= \cos(2\pi x)\sin(\pi y).
\end{align}
\end{subequations}
Substitution into the Stokes equations gives the body force
\begin{subequations}
\begin{align}
f^u &= 2\mu\pi^2\sin(\pi x)\cos(\pi y)-2\pi\sin(2\pi x)\sin(\pi y),\\
f^v &= -2\mu\pi^2\cos(\pi x)\sin(\pi y) + \pi\cos(2\pi x)\cos(\pi y).
\end{align}
\end{subequations}
The exact velocity is divergence-free, so this test isolates the
accuracy of the particular volume potential and its boundary correction.

\begin{table}[htbp]
    \centering
    \small
    \begin{tabular}{
        @{} 
        S[table-format=4.0] 
        *{3}{S[table-format=1.4e-2] S[table-format=1.2]} 
        @{}
    }
        \toprule
        {$N$} & {$E_u$} & {Rate} & {$E_v$} & {Rate} & {$E_p$} & {Rate} \\
        \midrule
         127 & 1.1108e-04 & {--} & 1.2326e-04 & {--} & 2.3026e-03 & {--} \\
         255 & 2.8287e-05 & 1.97 & 3.1289e-05 & 1.98 & 6.0868e-04 & 1.92 \\
         511 & 7.1008e-06 & 1.99 & 7.8491e-06 & 2.00 & 1.5283e-04 & 1.99 \\
        1023 & 1.7822e-06 & 1.99 & 1.9687e-06 & 2.00 & 3.8318e-05 & 2.00 \\
        \bottomrule
    \end{tabular}
    \caption{Maximum-norm errors and convergence rates for the body-forced trigonometric solution.}
    \label{tab:trig_errors}
\end{table}

\begin{table}[htbp]
    \centering
    \small
    \begin{tabular}{
        @{} 
        S[table-format=4.0] 
        S[table-format=2.0]
        *{3}{S[table-format=1.4e-2]} 
        @{}
    }
        \toprule
        {$N$} & {Iterations} & {$\kappa(A_c)$} & {$\kappa(H_{\rm reg}A_c)$} & {$\nabla_h\cdot \bm{u}$} \\
        \midrule
         127 & 68 & 6.0599e4 & 1.2440e2 & 2.8241e-11 \\
         255 & 59 & 9.0113e4 & 7.7860e1 & 1.4377e-10 \\
         511 & 84 & 3.3379e5 & 1.4656e2 & 8.2074e-10 \\
        1023 & 94 & 1.1605e7 & 2.2004e3 & 9.3361e-09 \\
        \bottomrule
    \end{tabular}
    \caption{GMRES iterations, condition numbers before and after preconditioning, and maximum discrete divergence for the body-forced solution.}
    \label{tab:trig_solver}
\end{table}

All three fields converge at second order in
Table~\ref{tab:trig_errors}. Because the boundary operator is unchanged by
the volume forcing, the condition numbers in Table~\ref{tab:trig_solver}
match those of the same unit-disk geometry in the homogeneous test. GMRES
takes 59--94 iterations, and the maximum divergence remains below
$9.4\times10^{-9}$. These results confirm that adding a volume potential
preserves both the boundary convergence rate and the discrete incompressibility
property.

\subsection{Exterior flows}
In this subsection, we consider two cases of exterior flows. The fluid occupies the unbounded complement of the prescribed obstacles. Dirichlet data are imposed only on the obstacle boundaries; the free-space LGF representation requires no artificial boundary conditions on any outer truncation of the computational box $[-1.15,1.15]^2$, which is used solely for evaluation and error measurement. Because the Stokes LGF grows logarithmically, a force-balanced density (Lemma~\ref{lem:stokes-single-layer-energy}) is essential for a bounded velocity at infinity; the hydrostatic rank completion of Section~\ref{sec:boundary-closure} places the discrete density in this compatible class.
\subsubsection{Single-obstacle manufactured solution}
A circular obstacle of radius $0.3$ is placed at the origin. The two
opposing Stokeslets are located at $(0.05,0.05)$ and $(-0.05,-0.05)$,
inside the excluded circle, with forces $(10,1)^T$ and $(-10,-1)^T$.
The force-doublet field is therefore smooth throughout the fluid region and
provides exact Dirichlet data on the obstacle.

\begin{table}[htbp]
    \centering
    \small
    \begin{tabular}{
        @{} 
        S[table-format=4.0] 
        *{3}{S[table-format=1.4e-2] S[table-format=1.2]} 
        @{}
    }
        \toprule
        {$N$} & {$E_u$} & {Rate} & {$E_v$} & {Rate} & {$E_p$} & {Rate} \\
        \midrule
         127 & 1.5550e-04 & {--} & 1.0101e-04 & {--} & 3.5642e-03 & {--} \\
         255 & 4.0325e-05 & 1.95 & 2.5936e-05 & 1.96 & 1.3612e-03 & 1.39 \\
         511 & 1.0249e-05 & 1.98 & 6.5884e-06 & 1.98 & 2.9871e-04 & 2.19 \\
        1023 & 2.5797e-06 & 1.99 & 1.6695e-06 & 1.98 & 9.4356e-05 & 1.66 \\
        \bottomrule
    \end{tabular}
    \caption{Maximum-norm errors and convergence rates for the single-obstacle exterior solution.}
    \label{tab:exterior_errors}
\end{table}

\begin{table}[htbp]
    \centering
    \small
    \begin{tabular}{
        @{} 
        S[table-format=4.0] 
        S[table-format=2.0]
        *{3}{S[table-format=1.4e-2]} 
        @{}
    }
        \toprule
        {$N$} & {Iterations} & {$\kappa(A_c)$} & {$\kappa(H_{\rm reg}A_c)$} & {$\nabla_h\cdot \bm{u}$} \\
        \midrule
         127 & 42 & 6.5248e03 & 5.0344e01 & 1.9231e-11 \\
         255 & 62 & 2.7607e04 & 1.1000e02 & 1.1575e-10 \\
         511 & 54 & 4.3124e04 & 6.3385e01 & 6.5885e-10 \\
        1023 & 73 & 1.5279e05 & 1.1768e02 & 3.2557e-09 \\
        \bottomrule
    \end{tabular}
    \caption{GMRES iterations, condition numbers before and after preconditioning, and maximum discrete divergence for the single-obstacle exterior solution.}
    \label{tab:exterior_solver}
\end{table}

The two velocity components converge at rates between $1.95$ and
$1.99$ (Table~\ref{tab:exterior_errors}). The pressure rate varies more on
individual refinements, but the total reduction from $N=127$ to $N=1023$
corresponds to an average order of approximately $1.75$. In
Table~\ref{tab:exterior_solver}, preconditioning keeps the condition number
between $50$ and $118$ and GMRES converges in 42--73 iterations, while the
maximum divergence stays below $3.3\times10^{-9}$.

\begin{figure}[htbp]
\centering
    \begin{subfigure}{0.45\textwidth}
        \centering
        \includegraphics[width=\textwidth, trim = 2cm 7cm 2cm 6.5cm]{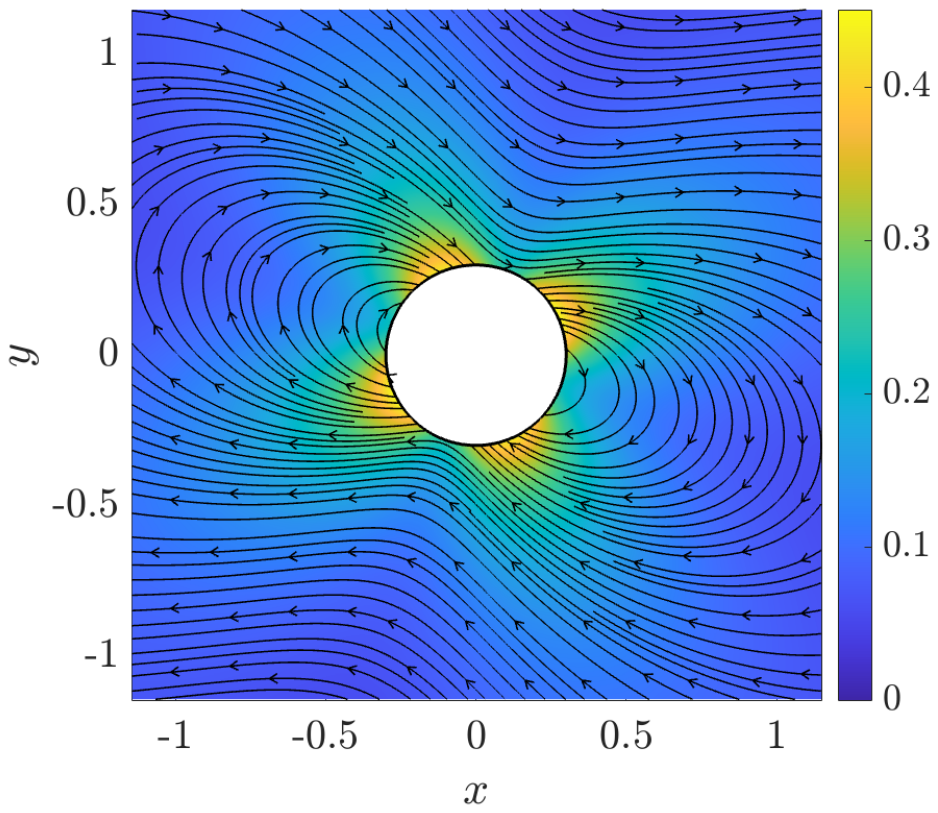}
        \caption{Velocity}\label{fig:ex_speed}
    \end{subfigure}
    ~
    \begin{subfigure}{0.45\textwidth}
        \centering
        \includegraphics[width=\textwidth, trim = 2cm 7cm 2cm 6.5cm]{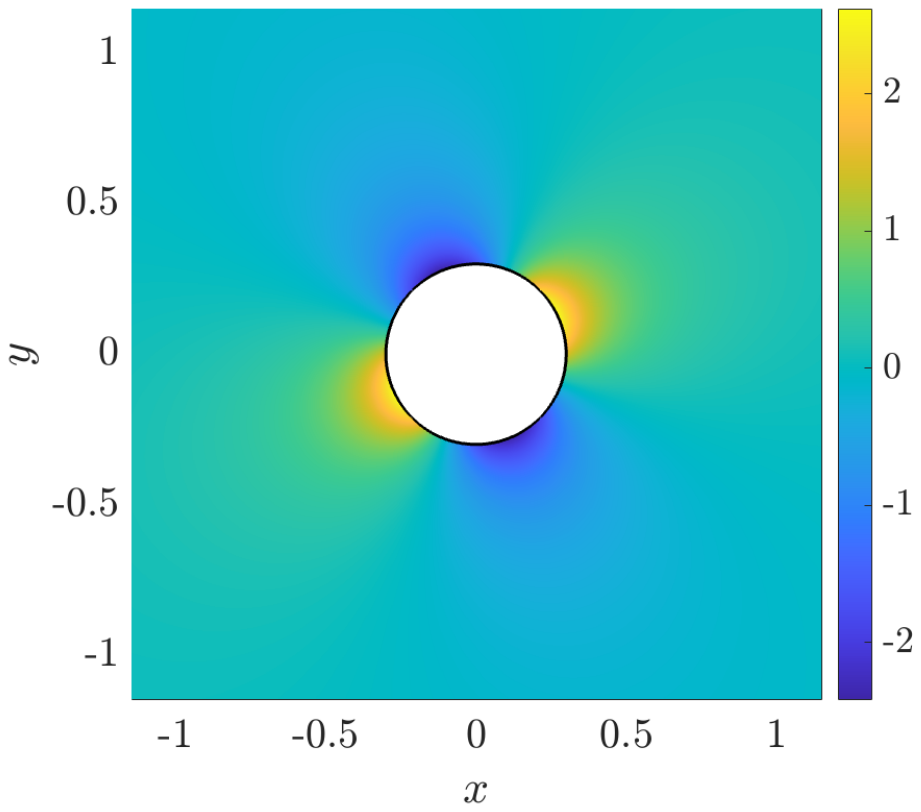}
        \caption{Pressure}\label{fig:ex_pressure}
    \end{subfigure}
    \caption{Velocity magnitude with streamlines and pressure for the single-obstacle exterior flow at $N=1023$.}\label{fig:exterior_flow}
\end{figure}

The approximated velocity magnitude with streamlines and pressure on mesh $N=1023$ are presented in Figure~\ref{fig:exterior_flow}.

\subsubsection{Three-obstacle manufactured solution}

The final manufactured example contains three elliptical obstacles.
Their boundaries are the zero level sets of
\begin{subequations}
\begin{align}
    \phi_1(x,y) &= 1 - \frac{(x+0.3)^2}{0.15^2} - \frac{(y-0.2)^2}{0.25^2}, \\
    \phi_2(x,y) &= 1 - \frac{(x-0.2)^2}{0.25^2} - \frac{(y+0.2)^2}{0.1^2}, \\
    \phi_3(x,y) &= 1 - \frac{(x-0.1)^2}{0.24^2} - \frac{(y-0.35)^2}{0.12^2}.
\end{align}
\end{subequations}
The obstacle centers are $\bm x_{c1}=(-0.3,0.2)$,
$\bm x_{c2}=(0.2,-0.2)$, and $\bm x_{c3}=(0.1,0.35)$.

We construct a smooth exterior solution by placing Stokes
singularities at these centers, which lie outside the fluid domain, and set
$\mu=1$.

For a source at $\bm x_0$, let $\bm r=\bm x-\bm x_0$ and
$r=|\bm r|$. A two-dimensional rotlet of torque $L$ generates
\begin{equation}
    \bm{u}_{\text{rot}}(\bm{x}; \bm{x}_0, L) = \frac{L}{4\pi\mu r^2} \begin{pmatrix} -r_y \\ r_x \end{pmatrix}, \quad p_{\text{rot}} = 0.
\end{equation}
A symmetric, torque-free stresslet with force $\bm f$ and dipole
direction $\bm d$ generates
\begin{subequations}
\begin{align}
    \bm{u}_{\text{str}}(\bm{x}; \bm{x}_0, \bm{f}, \bm{d}) &= \frac{1}{4\pi\mu} \left( -\frac{(\bm{f} \cdot \bm{d})}{r^2}\bm{r} + \frac{2(\bm{r} \cdot \bm{f})(\bm{r} \cdot \bm{d})}{r^4}\bm{r} \right), \\
    p_{\text{str}}(\bm{x}; \bm{x}_0, \bm{f}, \bm{d}) &= \frac{1}{2\pi} \left( -\frac{\bm{f} \cdot \bm{d}}{r^2} + \frac{2(\bm{r} \cdot \bm{f})(\bm{r} \cdot \bm{d})}{r^4} \right).
\end{align}
\end{subequations}

The exact solution is the sum of two rotlets and one stresslet:
\begin{subequations}
\begin{align}
    \bm{u}(\bm{x}) &= \bm{u}_{\text{rot}}(\bm{x}; \bm{x}_{c1}, L_1) + \bm{u}_{\text{rot}}(\bm{x}; \bm{x}_{c2}, L_2) + \bm{u}_{\text{str}}(\bm{x}; \bm{x}_{c3}, \bm{f}, \bm{d}), \\
    p(\bm{x}) &= p_{\text{str}}(\bm{x}; \bm{x}_{c3}, \bm{f}, \bm{d}).
\end{align}
\end{subequations}
The rotlets at $\bm x_{c1}$ and $\bm x_{c2}$ have torques
$L_1=L_2=1$. The stresslet at $\bm x_{c3}$ uses
$\bm f=(1,0.3)^T$ and $\bm d=(0.4,1)^T$. This configuration tests the
rank completion for several disconnected boundaries as well as interactions
among distinct tensorial singularities.

\begin{figure}[htbp]
\centering
    \begin{subfigure}{0.45\textwidth}
        \centering
        \includegraphics[width=\textwidth, trim = 2cm 7cm 2cm 6.5cm]{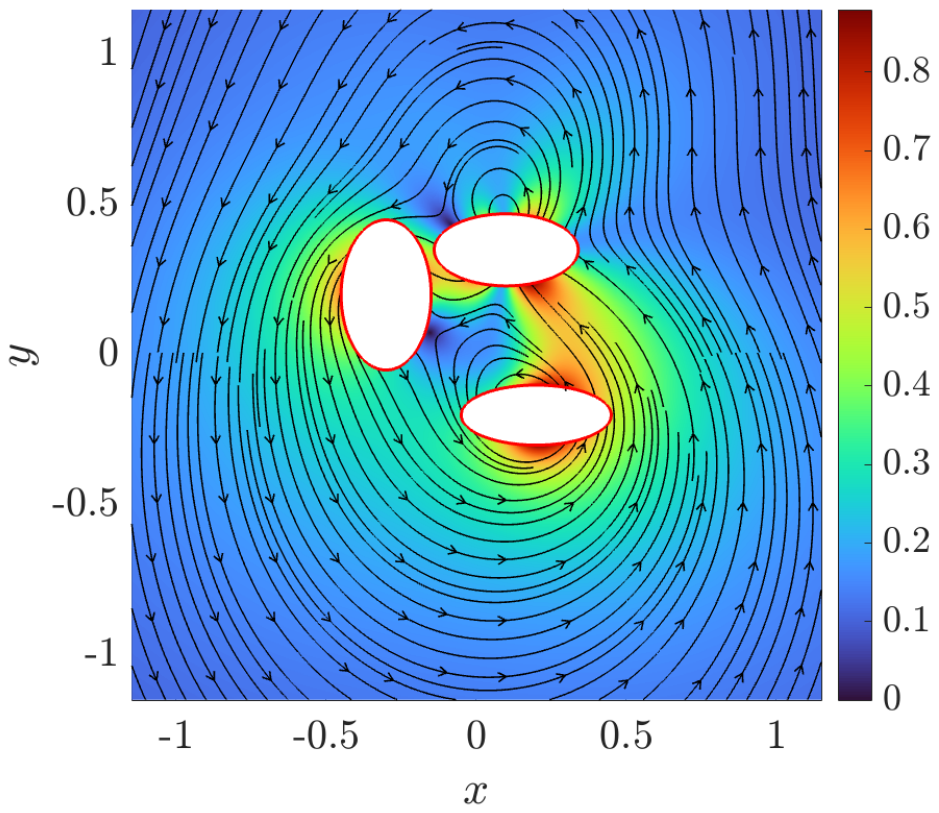}
        \caption{Velocity}\label{fig:multi_speed}
    \end{subfigure}
    ~
    \begin{subfigure}{0.45\textwidth}
        \centering
        \includegraphics[width=\textwidth, trim = 2cm 7cm 2cm 6.5cm]{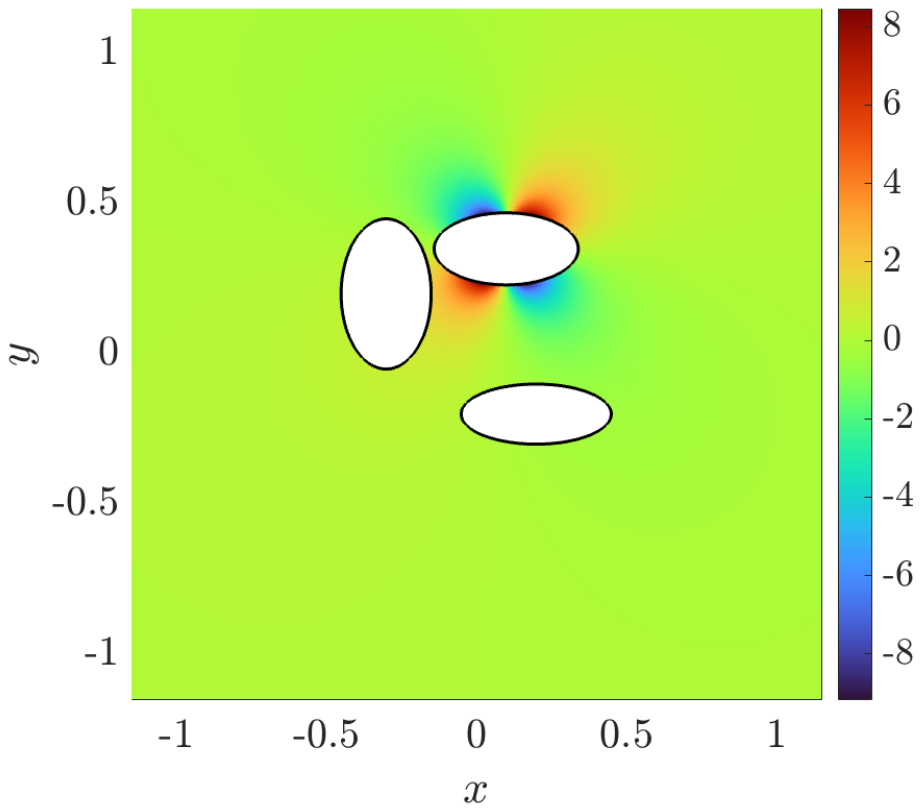}
        \caption{Pressure}\label{fig:multi_pressure}
    \end{subfigure}
    \caption{Velocity magnitude with streamlines and pressure for the three-obstacle exterior flow at $N=2047$.}\label{fig:multi_obs}
\end{figure}

\begin{table}[htbp]
    \centering
    \small
    \begin{tabular}{
        @{} 
        S[table-format=4.0] 
        *{3}{S[table-format=1.4e-2] S[table-format=1.2]} 
        @{}
    }
        \toprule
        {$N$} & {$E_u$} & {Rate} & {$E_v$} & {Rate} & {$E_p$} & {Rate} \\
        \midrule
         255 & 2.9714e-04 & {--} & 3.8773e-04 & {--} & 1.7405e-02 & {--} \\
         511 & 8.0391e-05 & 1.89 & 9.9531e-05 & 1.96 & 4.6968e-03 & 1.89 \\
        1023 & 2.1036e-05 & 1.93 & 2.5571e-05 & 1.96 & 1.2587e-03 & 1.90 \\
        2047 & 5.3415e-06 & 1.98 & 6.4262e-06 & 1.99 & 3.3151e-04 & 1.92 \\
        \bottomrule
    \end{tabular}
    \caption{Maximum-norm errors and convergence rates for the three-obstacle manufactured solution.}
    \label{tab:combined_ext_errors}
\end{table}

\begin{table}[htbp]
    \centering
    \small
    \begin{tabular}{
        @{} 
        S[table-format=4.0] 
        S[table-format=3.0]
        *{3}{S[table-format=1.4e-2]} 
        @{}
    }
        \toprule
        {$N$} & {Iterations} & {$\kappa(A_c)$} & {$\kappa(H_{\rm reg}A_c)$} & {$\nabla_h\cdot \bm{u}$} \\
        \midrule
         255 & 107 & 6.1632e04 & 6.6705e02 & 2.1988e-10 \\
         511 & 118 & 1.5156e05 & 5.7229e02 & 1.5983e-09 \\
        1023 & 132 & 5.4677e05 & 6.0863e02 & 6.0880e-09 \\
        2047 & 148 & 1.1876e06 & 6.6029e02 & 3.2678e-08 \\
        \bottomrule
    \end{tabular}
    \caption{GMRES iterations, condition numbers before and after preconditioning, and maximum discrete divergence for the three-obstacle solution.}
    \label{tab:combined_ext_solver}
\end{table}

Table~\ref{tab:combined_ext_errors} shows convergence rates between
$1.89$ and $1.99$ for all fields, with the rates moving closer to two as the
grid is refined. More importantly for the multiple-object completion, the
preconditioned condition number in Table~\ref{tab:combined_ext_solver} remains
between $5.7\times10^2$ and $6.7\times10^2$ over three refinements. GMRES
iterations increase gradually from 107 to 148, and the maximum divergence is
below $3.3\times10^{-8}$. The fields in Figure~\ref{fig:multi_obs} remain
smooth in the inter-obstacle regions and show no visible grid-aligned artifacts
near any boundary component.

\subsection{Overall observations}

Across the manufactured tests, the velocity converges uniformly at
approximately second order. Pressure is also close to second order in the
ellipse, multiply connected, body-forced, and three-obstacle tests; the
Taylor--Couette and single-obstacle exterior cases show more variable pressure
rates but monotone error reduction. The maximum discrete divergence ranges
from $10^{-12}$ to $10^{-7}$, confirming that geometry interpolation does not
destroy the MAC incompressibility constraint. Finally, the Calder\'on
preconditioner reduces the condition number by orders of magnitude in every
test. Its strongest nearly mesh-independent behavior occurs in the exterior
three-obstacle example, while the narrow-gap and finest-grid interior cases
remain the most demanding.

\section{Conclusion}
\label{sec:conclusion}

We have developed an unfitted boundary algebraic equation method for
the two-dimensional steady incompressible Stokes equations on a staggered MAC
grid. Regularized Laplace and biharmonic lattice Green's functions generate the
discrete Stokes velocity and pressure kernels, while local cut-point
interpolation imposes Dirichlet data without requiring a boundary-fitted mesh.
Sampled-normal rank updates remove the hydrostatic null modes, including the
independent modes associated with multiple obstacles. Nonzero body forces are
incorporated through a particular volume potential, leaving the boundary
operator unchanged, while padded FFTs accelerate the evaluation of both volume
and boundary convolutions.

Numerical experiments verify both the accuracy and the structural
properties of the method. Velocity converges at approximately second order
across the manufactured tests, with pressure converging at a similar rate in
most configurations. The maximum discrete divergence ranges from $10^{-12}$ to
$10^{-7}$, indicating that the unfitted boundary closure preserves the
MAC-grid incompressibility constraint to the accuracy of the iterative solve
and floating-point evaluation. By counteracting the mesh-dependent spectral
growth of the boundary system, the discrete Calder\'on construction reduces
the condition number by orders of magnitude and substantially improves solver
robustness for multiply connected and multiple-obstacle geometries, with the strongest conditioning gains observed for exterior flows. The method
also resolves complex multiscale flow structures: the Moffatt-eddy calculation,
for example, recovers the first three asymptotic scale ratios to within $0.7\%$.

The principal mathematical attraction of the framework is its
separation of tasks: the LGFs on the background grid enforce the discrete Stokes
equations and incompressibility, while geometry dependence is confined to thin
boundary layers and local interpolation. Its distinctive contribution
is this combination of a boundary-only density solve, sharp cut-point geometry
enforcement, MAC-grid incompressibility, hydrostatic rank completion, and
discrete Calder\'on preconditioning. In this sense, the method is a
discretize-then-represent boundary-integral analogue built from the MAC operator
itself, rather than a quadrature discretization of the continuous Stokes BIE.
It makes complex geometries easier to
accommodate without a fitted mesh or singular surface quadrature; the boundary
operator can be reused when only the body force changes, and the lattice kernels
can be reused when the geometry changes on a fixed grid.

While the current boundary algebraic framework effectively resolves complex geometries, several natural directions remain for further development. Close gaps and corners may benefit from adaptive or locally refined lattices, while larger boundary systems will require stronger matrix-free or multilevel acceleration. To further enhance solver robustness and expand the algebraic representation, developing a discrete double-layer formulation for the Stokes equations is a logical next step. Furthermore, this discrete potential framework can be naturally generalized to solid mechanics, specifically to boundary algebraic equations for linear elasticity, building on LGF-based discrete boundary-element ideas already explored in atomistic/continuum coupling \citep{hodapp2019lattice}. Finally, the volume-potential and homogeneous-correction decomposition provides a natural organizing principle for extensions to three-dimensional, time-dependent, and incompressible Navier--Stokes problems.

\section*{Acknowledgement}

This work was partially supported by the National Natural Science Foundation of China (Grant Nos. 12471342 to W. Ying and 12401546 to Q. Xia). Q. Xia additionally acknowledges funding from Wenzhou-Kean University (Grant Nos. ISRG2024003 and KY20250604000452). Q. Xia also thanks Anna-Karin Tornberg for many inspiring and helpful discussions on boundary integral method for Stokes flow.

\bibliographystyle{unsrtnat}
\bibliography{refs}

\end{document}